\def \endproof {\quad \hfill  \rule{2mm}{2mm} \par\medskip}
\DeclareMathSymbol{\subsetneqq}{\mathbin}{AMSb}{36}
\newcommand{\R}{\mathbb{R}}
\newcommand{\N}{\mathbb{N}}
\newcommand{\Z}{\mathbb{Z}}
\newcommand{\dint}{\displaystyle\int}
\newcommand{\dsum}{\displaystyle\sum}
\newcommand{\dsup}{\displaystyle\sup}
\newcommand{\dlim}{\displaystyle\lim}
\newcommand{\dliminf}{\displaystyle\liminf}
\newtheorem{th1}{{\bf Theorem}}[section]
\newtheorem{thm}[th1]{{\bf Theorem}}
\newtheorem{lem}[th1]{{\bf Lemma}}
\newtheorem{prop}[th1]{{\bf Proposition}}
\newtheorem{cor}[th1]{{\bf Corollary}}
\theoremstyle{remark}
\newtheorem{rem}[th1]{\bf Remark}
\theoremstyle{definition}
\newtheorem{defi}[th1]{\bf Definition}
\author[S.~Ibrahim]{Slim Ibrahim}
\address{Department of Mathematics and Statistics,\\University of Victoria\\
 PO Box 3060 STN CSC\\   Victoria, BC, V8P 5C3\\ Canada}
\email{\sl ibrahim@math.uvic.ca}
\urladdr{ http://www.math.uvic.ca/~ibrahim/}
\thanks{}
\author{Mohamed Majdoub}
\address{University Tunis ElManar,
Faculty of Sciences of Tunis, Department of Mathematics.}
\email{\sl mohamed.majdoub@fst.rnu.tn}
\thanks{M. M. is grateful to the Laboratory of
PDE and Applications at the Faculty of Sciences of Tunis.}
\author{Nader Masmoudi}
\address{New York University \\
The Courant Institute for Mathematical Sciences, USA.}
\email{\sl masmoudi@courant.nyu.edu}
\thanks{N. M is partially supported by an NSF Grant DMS-0703145}
\title[Well and ill-posedness issues ]
{Well and ill-posedness issues for energy supercritical waves}
\date{\today}
\begin{document}
\begin{abstract}
 We investigate the initial value problem for some energy supercritical semilinear
wave equations. We establish local existence in suitable spaces with continuous flow. We also obtain some {\it ill-posedness/weak ill-posedness}
 results. The proof uses the finite speed of propagation and a quantitative study of the associated ODE. It does not require any scaling invariance of the equation.
\end{abstract}


\subjclass[2000]{35L05, 49K40, 65F22, 34-XX, 34Cxx, 34C25}
\keywords{Nonlinear wave equation, well-posedness, ill-posedness, finite speed of propagation, oscillating second order ODE}

\maketitle


\section{Introduction}
In this work, we discuss some well-posedness issues of the Cauchy
problem associated to the semilinear wave equation
\begin{equation}
\label{wave1}
\partial_t^2 u-\Delta u+F'(u)=0,\quad\mbox{in}\quad \R_t\times\R_x^d,
\end{equation}
where $d\geq 2$ and $F:\R\longrightarrow\R$ is an {\it even} regular
function satisfying
\begin{equation}
\label{general}
F(0)=F'(0)=0\quad\mbox{and}\quad u\;F'(u)\geq 0.
\end{equation}
The above assumption on $F$ include the massive case {\rm i.e.} the Klein-Gordon equation. With hypothesis \eqref{general}, one can construct a global weak solution with finite energy data using a standard compactness
argument (see, for example \cite{Strauss}). However, the construction of strong solutions (even local) requires  some control on the growth at infinity and more tools. As regards the growth of the nonlinearity $F$, we distinguish two cases. For dimension $d\geq 3$ we shall assume that our Cauchy problem is $H^1$-supercritical in the sense that
\begin{equation}
\label{3+}\dfrac{F(u)}{|u|^{\frac{2d}{d-2}}}\nearrow \,  +\infty, \quad u \to \infty\,.
\end{equation}
In two space dimensions and thanks to Sobolev embedding, any Cauchy problem with polynomially growing nonlinearities is locally well posed regardless of the sign of the nonlinearity and the growth of $F$ at infinity. This is a limit case of \eqref{3+}. Square exponential nonlinearities were investigated first in \cite{NO1} where the authors showed global existence and scattering for small Cauchy data, then in \cite{A} where local existence was obtained under restrictive conditions, and finally in \cite{IMM} where a new notion of criticality based on the size of the energy appears. In this paper, we examine the situation of other growths of exponential nonlinearities (not necessarily square). More precisely, when $d=2$, we assume either
\begin{equation}
\label{2Dinfty}
\dfrac{\log(F(u))}{|u|^{2}}\nearrow\, +\infty, \quad u \to \infty,
\end{equation}
or
\begin{equation}
\label{expgrow}
\exists\;0<q\leq 2\; \quad\mbox{ \sf s.t.}
\quad\dfrac{\log(F(u))}{|u|^{q}}=O(1), \,\quad u \to \infty.
\end{equation}
The model example that we are going to work with when $d=3$ is given by
\begin{equation}
\label{NL3+}
\partial_t^2 u - \Delta u +u^7=0.
\end{equation}
It is a good prototype for all higher dimensions $d\geq3$ illustrating assumption \eqref{3+}. In the case $d=2$, we take
\begin{equation}
\label{Wave2D}
\partial_t^2 u - \Delta u +u\,(1+u^2)^{\frac{q-2}{2}}\;{\rm e}^{4\pi \left((1+u^2)^{\frac{q}{2}}-1\right)}=0,
\end{equation}
with $q>0$ illustrating either the cases \eqref{2Dinfty} or \eqref{expgrow}, depending upon the fact that $q>2$ or $q\leq 2$, respectively.\\

For any weak solution of (\ref{wave1}), define the total energy by
$$
E(u(t)){\buildrel\hbox{\footnotesize def}\over
=}\;\|\nabla_{t,x}u(t)\|_{L^2_x}^2\;+\; \dint_{\R^d}\;2F(u(t))\;dx.
$$

The energy of the data $(\varphi,\psi)\in\dot{H}^1\times L^2$ is given
by
$$
E(\varphi,\psi){\buildrel\hbox{\footnotesize
def}\over
=}\;\|\nabla\varphi\|_{L^2_x}^2\;+\;\|\psi\|_{L^2_x}^2\;+\;
\dint_{\R^d}\;2F(\varphi)\;dx.
$$
When $\psi=0$, we abbreviate the notation $E(\varphi,0)$ to simply $E(\varphi)$.

In the sequel, we adopt the following classical definition of local/global well-posedness.
\begin{defi}
\label{d1} Let ${\mathbf X}$ be a Banach space\footnote{Typically, ${\mathbf X}={\mathbf X}^s:=B^s_{p,q}\times B_{p,q}^{s-1}$, for some suitable choice of $s$, $p$ and $q$}.
\begin{itemize}
 \item
The Cauchy problem associated to \eqref{wave1} is {\it locally well-posed} in ${\mathbf X}$, abbreviated as LWP,
if for every data $(u_0,u_1)\in {\mathbf X}$, there exists a time $T>0$ and a unique\footnote{In some cases the uniqueness holds in more restrictive space.} (distributional) solution $u:[-T,T]\times\R^d\longrightarrow\R$ to
\eqref{wave1} such that $(u,\partial_tu)\in{\mathcal C}([-T,T]; {\mathbf X})$, $(u,\partial_t u)(t=0)=(u_0,u_1)$, and such that the solution map $(u_0,u_1)\longmapsto (u,\partial_tu)$ is continuous from ${\mathbf X}$ to ${\mathcal C}([-T,T]; {\mathbf X})$.
\item The Cauchy problem is {\it globally well-posed} (GWP) if the time $T$ can be taken arbitrary.
\item The Cauchy problem is {\it strongly well-posed} (SWP) if the solution map is uniformly continuous.
\item The Cauchy problem is {\it ill-posed} (IP) if the solution map is not continuous.
\item
The Cauchy problem is said {\it weakly ill-posed} on a set ${\mathbf Y}\subset {\mathbf X}$ (WIP), if the solution map
$$
(u_0,u_1)\in {\mathbf Y}\longmapsto (u,\partial_tu)
$$
is not uniformly continuous.
\end{itemize}
\end{defi}
Let us recall a few historic facts about this  problem. First, when the space dimension $d\geq 3$, the defocusing semilinear wave
equation with power $p$ reads
\begin{equation}
\label{N} \partial_t^2u-\Delta u+|u|^{p-1}u=0,
\end{equation}
where $p>1$. This problem has been widely
investigated and there is a large literature dealing with the
well-posedness theory of (\ref{N}) in the scale of the Sobolev
spaces $H^s$. Second, for the global solvability in the energy space
$\dot{H}^1\times L^2$, there are mainly three cases. The first case
is when $p<p_c$ where $p_c=\frac{d+2}{d-2}$, this is the subcritical
case. In this case, Ginibre and Velo \cite{GV1} showed that the
problem (\ref{N}) is globally well-posed in the energy space. If the
exponent $p$ is critical (which means $p=p_c$) this problem was
 solved by Shatah-Struwe (\cite{SS2} and references therein). Finally in the case
$p>p_c$, the well-posedness in the energy space is an open problem
except for some partial results about weak illposedness. See for example \cite{CCT},
\cite{Le}, \cite{Leb}, \cite{BGT} and \cite{BIG}. See also \cite{tao} for a result about global regularity for a logarithmically energy-supercritical wave equation in the radial case.\\ In dimension two, $H^1$-critical nonlinearities seem to be of
exponential type{\footnote{In fact, the critical nonlinearity is of
exponential type in any dimension $d$ with respect to $H^{d/2}$
norm.}}, since every power is $H^1$-subcritical. In a recent work \cite{IMM1}, the case $F(u)=\frac{1}{8\pi}\left({\rm e}^{4\pi u^2}-1\right)$ was
investigated and the criticality was proposed with respect to the
size of the energy. Moreover, the local strong well-posedness was shown under the size restriction $\|\nabla u_0\|_{L^2}<1$. In this paper, we want to investigate the local wellposedness regardless of the size of the initial data.\\

The ill posedness results of \cite{CCT} are based on the scaling invariances of the wave and Shr\"odinger equations with homogeneous nonlinearities. The idea is to approximate the solution by its corresponding ODE (at the zero dispersion limit). Since solutions of the ODE are periodic in time, then a de-coherence phenomena occurs for small time since the ODE solutions oscillate fast. Our idea to overcome the absence of scaling invariance is to choose one step-functions as initial data (i.e. functions constant near zero). The presence of the step immediately guarantees the equality between the PDE and the ODE solutions in a backward light cone, thanks to the finite speed of propagation. The length of the step can be adjusted (in the supercritical regime) so that ill-posedness/weak ill-posedness occurs inside the light cone.\\

This paper is organized as follows.
In Section 2, we state our main results.
In Section 3, we recall some basic definitions and auxiliary lemmas.
In Sections 4, we investigate the energy regularity regime.
Section 5 is devoted to the low regularity data.\\

Finally, we mention that, $C$ will
be used to denote a constant which may vary from line to line.
We also use $A\lesssim B$ to denote an estimate of the form $A\leq C B$
for some constant $C$.

\section{Main results}

\subsection{Energy regularity data}
 First we show that if the general assumptions \eqref{general}-\eqref{3+} (or \eqref{general}-\eqref{2Dinfty}) are satisfied, then the nonlinearity is too strong to ensure the local well-posedness in the energy space. Hence we have
\begin{thm}
\label{Mainresult1}
Assume that $d\geq 3$ and \eqref{general}, \eqref{3+} or $d=2$ and \eqref{general}, \eqref{2Dinfty}. Then
\label{energy}
\begin{enumerate}
 \item[1)] There exists a sequence $(\varphi_k)$ in $\dot{H}^1$ and
a sequence $(t_k)$ in $(0,1)$ satisfying
$$
\|\nabla\varphi_k\|_{L^2_x}\longrightarrow 0,\quad
t_k\longrightarrow 0,\quad \dsup_k\;E(\varphi_k)<\infty,
$$
and such that any weak solution $u_k$ of (\ref{NL3+}) with initial
data $(\varphi_k,0)$ satisfies
$$
\dliminf_{k\rightarrow+\infty}\;\|\partial_t
u_k(t_k)\|_{L^2_x}\gtrsim 1.
$$
In particular the Cauchy problem is ill-posed in $H^1\times L^2$.
\item[2)] If we relax the condition $\dsup_k\;E(\varphi_k)<\infty $ by taking $\lim_k\dint F(\varphi_k)=+\infty$, we can even get
$$
\dlim_k\|\partial_t u_k(t_k)\|_{L^2_x} = \infty\,.
$$
\end{enumerate}
\end{thm}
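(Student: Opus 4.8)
The plan is to reduce the PDE to its associated second order ODE inside a backward light cone via finite speed of propagation, exploit the fast oscillation of that ODE in the supercritical regime, and then balance a one parameter family of step data against the growth assumption \eqref{3+} (or \eqref{2Dinfty}). For a height $a>0$ let $v_a$ solve $\ddot v+F'(v)=0$ with $v_a(0)=a$, $\dot v_a(0)=0$. Since $F$ is even with $uF'(u)\ge0$, the ODE energy $\tfrac12\dot v_a^2+F(v_a)\equiv F(a)$ is conserved, so $v_a$ is periodic and, at the first time $t_a:=T(a)/4$ for which $v_a(t_a)=0$, one has $|\dot v_a(t_a)|=\sqrt{2F(a)}$. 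I would take as data a radial step $\varphi_k$ equal to $a_k$ on $B(0,r_k)$ and cut off to $0$ on a thin exterior layer. The spatially homogeneous map $(t,x)\mapsto v_{a_k}(t)$ is an exact weak solution of \eqref{wave1} agreeing with $\varphi_k$ on $B(0,r_k)$, so finite speed of propagation forces \emph{every} weak solution $u_k$ with data $(\varphi_k,0)$ to coincide with $v_{a_k}(t)$ on the cone $\{|x|<r_k-t\}$. Choosing $t_k=t_{a_k}$ then gives
\[
\|\partial_t u_k(t_k)\|_{L^2_x}^2\ \ge\ \dint_{|x|<r_k-t_k}|\dot v_{a_k}(t_k)|^2\,dx\ \gtrsim\ F(a_k)\,(r_k-t_k)^d ,
\]
using $|\dot v_{a_k}(t_k)|^2=2F(a_k)$.

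For $d\ge3$ I would set $r_k=F(a_k)^{-1/d}$, so that $F(a_k)r_k^d\equiv1$. Then $E(\varphi_k)\approx a_k^2 r_k^{d-2}+F(a_k)r_k^d$ stays bounded, while $\|\nabla\varphi_k\|_{L^2}^2\approx a_k^2 r_k^{d-2}=a_k^2 F(a_k)^{-(d-2)/d}\to0$ and $\|\varphi_k\|_{L^2}^2\approx a_k^2 F(a_k)^{-1}\to0$, both precisely because \eqref{3+} asserts $F(a_k)/a_k^{2d/(d-2)}\to+\infty$; hence $(\varphi_k,0)\to0$ in $H^1\times L^2$. The remaining point is $t_k\ll r_k$, for which I need the uniform quarter period bound $t_a=\dint_0^a\frac{dv}{\sqrt{2(F(a)-F(v))}}\lesssim a/\sqrt{F(a)}$. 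I would obtain it by splitting $[0,a]$ at $a/2$: on $[0,a/2]$ convexity gives $F(a)-F(v)\ge F(a)-F(a/2)\ge F(a)/2$, and on $[a/2,a]$ the bound $F(a)-F(v)\ge F'(v)(a-v)\ge F'(a/2)(a-v)$ together with $F(a)\le2aF'(a/2)$ controls the integrable singularity. Then $t_k/r_k\lesssim a_k F(a_k)^{-(d-2)/(2d)}\to0$ by \eqref{3+}, so $(r_k-t_k)^d\ge(r_k/2)^d$ for large $k$ and the display is $\gtrsim F(a_k)r_k^d=1$. Since $t_k\to0$ as well, the flow cannot be continuous at $0$, which yields part 1 and ill-posedness.

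In two dimensions a linear cutoff no longer produces a vanishing gradient, so I would replace the exterior layer by a logarithmic profile on an annulus $r_k\le|x|\le L_k$, for which $\|\nabla\varphi_k\|_{L^2}^2\approx a_k^2/\log(L_k/r_k)$, made small by taking $\log(L_k/r_k)\gg a_k^2$; assumption \eqref{2Dinfty} is exactly what keeps the annulus contribution to $\dint F(\varphi_k)$ bounded. For part 2 I would keep $a_k\to\infty$ but loosen $r_k$: the two constraints $\|\nabla\varphi_k\|_{L^2}^2\approx a_k^2 r_k^{d-2}\to0$ and $t_k\ll r_k$ leave the whole window $a_k F(a_k)^{-1/2}\ll r_k\ll a_k^{-2/(d-2)}$, and near its upper end $F(a_k)r_k^d\to+\infty$; picking $r_k$ there turns the display into $\|\partial_t u_k(t_k)\|_{L^2_x}^2\gtrsim F(a_k)r_k^d\to\infty$, giving $\dlim_k\|\partial_t u_k(t_k)\|_{L^2_x}=\infty$.

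The main obstacle is the uniform quarter period estimate $t_a\lesssim a/\sqrt{F(a)}$ for a general, non-homogeneous $F$: with no scaling at one's disposal it must be squeezed out of the growth hypotheses alone, and its compatibility with simultaneously bounded energy and vanishing gradient is exactly the arithmetic encoded in \eqref{3+} and \eqref{2Dinfty}. The second delicate point is the two dimensional energy bookkeeping for the logarithmic layer, and the verification that finite speed of propagation pins down \emph{every} weak solution (not only the homogeneous one) inside the cone, since uniqueness is unavailable in the supercritical range.
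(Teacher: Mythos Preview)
Your overall strategy for part~1)---step data, the ODE inside the backward light cone via finite speed of propagation, and the quarter period---is exactly the paper's. Two points deserve comment. First, your quarter-period bound $t_a\lesssim a/\sqrt{F(a)}$ is correct, but the argument you sketch invokes convexity of $F$ and monotonicity of $F'$, neither of which is among the hypotheses. The paper avoids this by using the monotonicity built into \eqref{3+} directly: after the substitution $v\mapsto va$ one has
\[
T(a)=2\sqrt{2}\,\frac{a}{\sqrt{F(a)}}\dint_0^1\Big(1-\frac{F(va)}{F(a)}\Big)^{-1/2}\,dv,
\]
and $F(u)/u^{p}\nearrow$ with $p=\frac{2d}{d-2}$ forces $F(va)/F(a)\le v^{p}$, so the integral is dominated by $\dint_0^1(1-v^{p})^{-1/2}\,dv<\infty$. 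Second, the paper cuts off with the harmonic profile $|x|^{2-d}-1$ on an annulus of outer radius~$1$ rather than a linear layer of width $\sim r_k$; either choice works, but the harmonic one makes the potential-energy bookkeeping on the annulus slightly cleaner. Your treatment of $d=2$ via the logarithmic layer is again the paper's.

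For part~2) you and the paper take genuinely different routes. The paper restricts to the model $F'(u)=u^{7}$ and builds data oscillating between two nearby plateau heights $\sqrt{k}$ and $\sqrt{k}(1-\eta_k)$ on $N_k\sim k^{1/6}M_k^{2}$ concentric sub-annuli, with the parameters tuned so that at a single time $t_k$ one associated ODE solution sits at its maximum while the other sits at zero; the many annuli then contribute coherently to $\|\partial_t u_k(t_k)\|_{L^2}$. Your argument is simpler and works for any $F$ satisfying \eqref{3+}: keep the single plateau of part~1) but widen it into the window $a_k/\sqrt{F(a_k)}\ll r_k\ll a_k^{-2/(d-2)}$, which is nonempty precisely because of \eqref{3+}, and push $r_k$ toward the upper end so that $F(a_k)r_k^{d}\to\infty$ while $a_k^{2}r_k^{d-2}\to0$. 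This buys generality and bypasses the period-matching entirely, at the cost of losing any control on $E(\varphi_k)$---which is exactly what part~2) allows.
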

\begin{rem}
In \cite{tao}, Tao has shown the global well-posedness of the logarithmic energy supercritical wave equation in $H^{1+\varepsilon}\times H^{\varepsilon}$ for any $\varepsilon>0$. The above Theorem shows that $\varepsilon$ cannot be taken zero.
\end{rem}
The above Theorem covers model \eqref{Wave2D}  in two space dimensions with $q>2$. When $q<2$, recall that the global well-posedness in the energy space can easily be obtained through the sharp Moser-Trudinger inequality combined with the following simple observation
$$
\forall\,\varepsilon>0,\quad \exists\,C_\varepsilon>0\quad\mbox{s.t.}\quad
\forall\, u\in\R,\qquad\Big|(1+u^2)^{\frac{q-2}{2}}\;{\rm e}^{4\pi (1+u^2)^{\frac{q}{2}}}-{\rm e}^{4\pi}\Big|\leq\,C_\varepsilon\,\left({\rm e}^{\varepsilon u^2}-1\right).
$$
In the case $q=2$, the local well-posedness for the Cauchy problem associated to \eqref{Wave2D} in the energy space was first established in \cite{NO1, NO2} for small Cauchy data. Later on, optimal smallness for well-posedness was investigated, first in \cite{A} for radially symmetric initial data $(0,u_1)$, and then in \cite{IMM1, IMM2} for general data. The following result generalizes the previous results to any data in the energy space regardless of its size.
\begin{thm}
\label{loc-large} Let $(u_0,u_1)\in H^{1}\times L^{2}$. There exists a time $T>0$ and a unique solution $u$ of \eqref{Wave2D}  with $q=2$  in the space $C_T(H^{1})\cap C^1_T(L^{2})$ satisfying $u(0,x)=u_0(x)$ and $\dot u(0,x)=u_1(x)$. Moreover, the solution map is continuous on $H^1\times L^2$.
\end{thm}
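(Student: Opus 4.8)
The plan is to solve the Duhamel formulation
\[
u(t)=\cos(t\sqrt{-\Delta})\,u_0+\frac{\sin(t\sqrt{-\Delta})}{\sqrt{-\Delta}}\,u_1-\dint_0^t\frac{\sin((t-s)\sqrt{-\Delta})}{\sqrt{-\Delta}}\,f(u(s))\,ds,
\]
where for $q=2$ the nonlinearity reduces to $f(u)=u\,{\rm e}^{4\pi u^2}$ (so that $F(u)=\frac{1}{8\pi}({\rm e}^{4\pi u^2}-1)$), by a contraction argument. The resolution space $X_T$ should control the energy norm $\|(u,\partial_t u)\|_{L^\infty_T(H^1\times L^2)}$ together with a Strichartz norm adapted to the two-dimensional wave equation, of the form $\|u\|_{L^4_T(\mathcal{C}^\rho)}$ for a suitable H\"older/Besov exponent $\rho>0$. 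The matching linear estimate is the $2$D wave Strichartz inequality for the free propagator and the Duhamel operator, and one records that the free evolution has $L^4_T(\mathcal{C}^\rho)$ norm tending to $0$ as $T\to0$.

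The core is the nonlinear estimate bounding $f(u)=u\,{\rm e}^{4\pi u^2}$ in $L^1_T(L^2_x)$, and likewise the difference $f(u)-f(\tilde u)$ needed for the contraction. Here I would invoke the sharp logarithmic Trudinger--Moser inequality, which controls $\|u(t)\|_{L^\infty_x}^2$ by $\frac{1}{4\pi}\|\nabla u(t)\|_{L^2}^2\,\log\!\big(C+\|u(t)\|_{\mathcal{C}^\rho}/\|u(t)\|_{L^2}\big)$; inserting this into ${\rm e}^{4\pi u^2}$ turns the exponential into a \emph{power} of the H\"older norm whose exponent is essentially $\|\nabla u\|_{L^2}^2$. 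Integrating in time then requires this exponent to be compatible with the $L^4_T$ control, which succeeds only once $\|\nabla u\|_{L^2}$ lies below the critical threshold $\|\nabla u\|_{L^2}<1$. This is precisely the size restriction of the earlier works.

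To remove the size restriction and reach arbitrary data, I would split $(u_0,u_1)=(a_0,a_1)+(b_0,b_1)$ with $(a_0,a_1)$ smooth (say in $H^{1+\delta}\times H^{\delta}$, so that by the $2$D Sobolev embedding $H^{1+\delta}(\R^2)\hookrightarrow L^\infty$ its free evolution $v$ is bounded in $L^\infty_{T,x}$) and $\|(b_0,b_1)\|_{H^1\times L^2}<\varepsilon$ as small as desired. Writing $u=v+w$, the remainder $w$ solves the same equation with source $f(v+w)$ and small data $(b_0,b_1)$. Expanding ${\rm e}^{4\pi(v+w)^2}={\rm e}^{4\pi v^2}\,{\rm e}^{8\pi vw}\,{\rm e}^{4\pi w^2}$, the factor ${\rm e}^{4\pi v^2}$ is bounded by a constant $C(\|v\|_{L^\infty})$, the cross factor ${\rm e}^{8\pi vw}$ is absorbed by Young's inequality at the cost of enlarging the quadratic coefficient from $4\pi$ to $4\pi+\eta$, and the genuinely critical factor ${\rm e}^{(4\pi+\eta)w^2}$ is controlled by Trudinger--Moser because $\|\nabla w\|_{L^2}$ is small. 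This reduces the large-data critical problem to the sub-threshold estimate of the previous step.

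The fixed point is then closed on a short interval $[0,T]$, using the smallness of the free Strichartz tail together with the factor of $T$ coming from the time integration in the nonlinear estimate; uniqueness follows from the same Lipschitz bound on $f$, and continuity of the flow from the contraction estimates applied to two data. \textbf{The main obstacle} is the nonlinear estimate at the \emph{critical} constant $4\pi$: balancing the exponent produced by the logarithmic inequality against the $L^4_T$ time integrability, while simultaneously absorbing the cross term ${\rm e}^{8\pi vw}$ without pushing the effective quadratic coefficient past the Trudinger--Moser threshold, is the delicate point on which the whole argument rests.
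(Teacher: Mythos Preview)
Your proposal is correct and follows essentially the same route as the paper: split the data into a high-regularity part and a small $H^1\times L^2$ remainder, exploit the $L^\infty$ bound on the evolution of the smooth part to reduce the exponential nonlinearity to a sub-threshold Trudinger--Moser/logarithmic estimate for the remainder, and close a contraction in an energy-plus-$L^4_T(\mathcal{C}^{1/4})$ Strichartz space. The one minor variation is that the paper takes $v$ to be the \emph{nonlinear} solution with the regularized data $(u_0,u_1)_{<n}\in H^2\times H^1$ (so that $w=u-v$ solves a perturbed equation with source $f(v+w)-f(v)$), whereas you let $v$ be the free evolution and carry the full source $f(v+w)$; both choices work, yours being marginally more self-contained since it does not presuppose $H^2\times H^1$ local well-posedness as an intermediate step.
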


In \cite{IMM2} it is shown that the local solutions of \eqref{Wave2D} (with $q=2$) are global whenever the total energy $E\leq1$, where
$$
E(u(t)){\buildrel\hbox{\footnotesize def}\over
=}\;\|\nabla_{t,x}u(t)\|_{L^2_x}^2\;+\frac{1}{4\pi}\; \int_{\R^2}\;{\rm e}^{4\pi u^2}-1\;dx.
$$
Indeed, in that case, the Cauchy problem is strongly well posed. The following result shows the weak ill-posedness on the set $\{\,E<1+\delta\,\}$ for any $\delta>0$ . More precisely
\begin{thm}
\label{2Dsupercritical} Let $\nu>0$. There exist a sequence of positive real
numbers $(t_k)$ tending to zero and two sequences $(u_k)$ and
$(v_k)$ of solutions of the nonlinear Klein-Gordon equation
\begin{equation}
\label{nlkg} \Box u+u {\rm e}^{4\pi u^2}=0
\end{equation}
satisfying the following:
$$
 \|(u_k-v_k)(t=0,\cdot)\|_{H^1}^2+\|
 \partial_t(u_k-v_k)(t=0,\cdot)\|_{L^2}^2=
 \circ(1)\;\hbox{ as }\;k\rightarrow+\infty,
$$

$$
0< E(u^k,0)-1\leq {\rm e}^3\nu^2 ,\quad 0< E(v^k,0)-1\leq \nu^2,
$$
 and
$$
 \liminf_{k\longrightarrow\infty}\|
 \partial_t(u_k-v_k)(t_k,\cdot)\|^2_{L^2}\geq\frac{\pi}{4}({\rm e}^2+{\rm e}^{3-8\pi})\nu^2.
$$
\end{thm}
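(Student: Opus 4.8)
The plan is to produce $(u_k)$ and $(v_k)$ from spatially truncated step data and to use finite speed of propagation to replace \eqref{nlkg}, inside a backward light cone, by the spatially homogeneous ODE
$$
\ddot\phi+\phi\,{\rm e}^{4\pi\phi^2}=0,\qquad \phi(0)=a,\quad \dot\phi(0)=0 .
$$
Multiplying by $\dot\phi$ gives the conserved energy $\frac12\dot\phi^2+\frac{1}{8\pi}{\rm e}^{4\pi\phi^2}=\frac{1}{8\pi}{\rm e}^{4\pi a^2}$, i.e.
$$
\dot\phi^2=\frac{1}{4\pi}\left({\rm e}^{4\pi a^2}-{\rm e}^{4\pi\phi^2}\right),
$$
so the oscillator has amplitude $a$ and period $T(a)\sim a\,{\rm e}^{-2\pi a^2}\to0$ as $a\to\infty$: large amplitudes oscillate arbitrarily fast, which is the source of the decoherence.

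\medskip
\noindent\emph{Data.} For each $k$ I would take a Moser-type profile concentrated at the origin,
$$
U_a(x)=\begin{cases}
a, & |x|\le r_k,\\[1mm]
a\,\dfrac{\log(R_k/|x|)}{\log(R_k/r_k)}, & r_k\le|x|\le R_k,\\[1mm]
0, & |x|\ge R_k,
\end{cases}
\qquad u_1\equiv0 ,
$$
with $a=\alpha_k\to\infty$ for $u_k$ and $a=\beta_k$ for $v_k$, so that $\|\nabla U_a\|_{L^2}^2=\frac{2\pi a^2}{\log(R_k/r_k)}$. The radii $r_k,R_k$ are tuned so that $\|\nabla U_{\alpha_k}\|_{L^2}^2\to1$ and the total energy sits just above $1$, inside the prescribed windows; this is the delicate, Moser--Trudinger--critical part of the construction. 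I would freeze the amplitude gap by
$$
\alpha_k^2-\beta_k^2=\frac{3}{4\pi},
$$
so that on the one hand $\alpha_k-\beta_k=\frac{3}{4\pi(\alpha_k+\beta_k)}\to0$, giving the data convergence
$$
\|\nabla(U_{\alpha_k}-U_{\beta_k})\|_{L^2}^2\lesssim\frac{(\alpha_k-\beta_k)^2}{\alpha_k^2}\to0,\qquad \|U_{\alpha_k}-U_{\beta_k}\|_{L^2}^2\to0 ,
$$
and on the other hand the step potentials $\frac{r_k^2}{4}({\rm e}^{4\pi a^2}-1)$ of $u_k$ and $v_k$ differ by the fixed factor ${\rm e}^{4\pi(\alpha_k^2-\beta_k^2)}={\rm e}^{3}$, which is exactly the ratio between the two energy windows $0<E(u^k,0)-1\le{\rm e}^3\nu^2$ and $0<E(v^k,0)-1\le\nu^2$.

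\medskip
\noindent\emph{Reduction and lower bound.} Since the data are constant $(=a)$ with $u_1=0$ on $B(0,r_k)$, the local solution from Theorem \ref{loc-large} agrees, by finite speed of propagation and uniqueness, with the ODE solution $\phi_k$ (resp. $\psi_k$) of amplitude $\alpha_k$ (resp. $\beta_k$) on the cone $\{(t,x):|x|+t<r_k\}$. Thus $\partial_t u_k=\dot\phi_k$ and $\partial_t v_k=\dot\psi_k$ there, and restricting the $L^2$ norm to the inner ball gives the clean lower bound
$$
\|\partial_t(u_k-v_k)(t_k)\|_{L^2}^2\ \ge\ \pi(r_k-t_k)^2\big(\dot\phi_k(t_k)-\dot\psi_k(t_k)\big)^2 .
$$
Because $T(\alpha_k)\to0$, I may choose $t_k\to0$ still inside the cone at which the two oscillators have definitely separated. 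The key point is that the decoherence does not degenerate: since $\alpha_k^2-\beta_k^2=\frac{3}{4\pi}$ is frozen, the periods keep the fixed ratio $T(\beta_k)/T(\alpha_k)\to{\rm e}^{3/2}$ and the speeds keep comparable fixed ratios, so $\big(\dot\phi_k(t_k)-\dot\psi_k(t_k)\big)^2$ stays of order ${\rm e}^{4\pi\alpha_k^2}$ while $\pi(r_k-t_k)^2\sim\pi\nu^2{\rm e}^{-4\pi\alpha_k^2}$; the product is a nonvanishing multiple of $\nu^2$ of the form $\frac{\pi}{4}(\cdots)\nu^2$.

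\medskip
\noindent\emph{Main obstacle.} The two hard points are both quantitative. First, pinning down the exact constant $\frac{\pi}{4}({\rm e}^2+{\rm e}^{3-8\pi})\nu^2$ requires selecting the single time $t_k\to0$ (inside both cones) precisely and evaluating $\dot\phi_k(t_k),\dot\psi_k(t_k)$ through the conservation law rather than by crude bounds, so that the phase drift between the two fast oscillators is captured exactly. Second, placing the total energy in the narrow windows $0<E-1\le{\rm e}^3\nu^2$ and $0<E-1\le\nu^2$ is a genuinely critical Moser--Trudinger balance between the gradient energy, the step potential and the contribution of the transition annulus; this bookkeeping, not the cone lower bound (which is robust since it only uses a subregion), is where the real delicacy lies.
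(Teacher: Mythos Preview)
Your strategy is exactly the paper's: Moser step data, finite speed of propagation to reduce to the ODE $\ddot\phi+\phi e^{4\pi\phi^2}=0$ on the inner cone, and decoherence of two nearby fast oscillators. A few concrete points where the paper's execution differs from your sketch:

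\textbf{Data.} The paper takes the classical Moser function $f_k$ (so $R_k=\nu$, $r_k=\nu e^{-k/2}$, $\beta_k=\sqrt{k/4\pi}$) and the two data are $(1+\frac1k)f_k(\cdot/\nu)$ and $f_k(\cdot/\nu)$. Thus $4\pi(\alpha_k^2-\beta_k^2)=2+\frac1k\to 2$, not $3$; the $e^3$ in the energy window is merely the crude bound $e^{2+1/k}\le e^3$ coming from Lemma~\ref{l3}, not a structural ratio. Your fixed gap $3/(4\pi)$ would shift the constants.

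\textbf{Choice of $t_k$.} The paper does not leave this vague: it picks the unique $t_k\in(0,T_k/4)$ at which $\Phi_k(t_k)=\Phi_k(0)-\Phi_k(0)^{-1}$, and uses Lemma~\ref{Alamda} to check $t_k\lesssim k^{-1/2}e^{-k/2}\ll \nu e^{-k/2}$, so one stays well inside the cone.

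\textbf{Lower bound.} Rather than estimating $(\dot\Phi_k-\dot\Psi_k)^2$ directly, the paper bounds $|\dot\Phi_k(t_k)^2-\dot\Psi_k(t_k)^2|$ from below: the conservation law gives $\dot\Phi_k(t_k)^2=\frac{1}{4\pi}e^{(1+1/k)^2k}(1-e^{-8\pi+o(1)})$ exactly, while $0\le\dot\Psi_k(t_k)^2\le \frac{1}{4\pi}e^k$ trivially, so the difference is $\gtrsim e^k$. Combined with $|\dot\Phi_k|+|\dot\Psi_k|\lesssim e^{k/2}$ this yields $|\dot\Phi_k-\dot\Psi_k|\gtrsim e^{k/2}$, and integration over $|x|<\frac{\nu}{2}e^{-k/2}$ gives the $\nu^2$ lower bound. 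So the ``hard quantitative obstacle'' you flag is handled by this specific $t_k$ together with the conservation law, not by tracking phases of both oscillators.
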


Notice that Theorem \ref{loc-large} yields the continuity with respect to the initial data and Theorem \ref{2Dsupercritical} yields that there is no uniform continuity if the energy is larger than $1$ (supercritical regime).
\begin{rem}
Very recently, Struwe \cite{Struwe09} has constructed global smooth solutions for the 2D energy critical wave equation with radially symmetric data. Although the techniques are different, this result might be seen as an analogue of Tao's result \cite{tao} for the 3D energy supercritical wave equation. Our Theorem \ref{2Dsupercritical} shows just the weak ill-posedness in the supercritical case. This is weaker  than the result in higher dimensions where the flow fails to be continuous at zero as shown in \cite{CCT}. The reason behind this is that small date are always subcritical in the exponential case.
\end{rem}
\subsection{Low regularity data for the model \eqref{Wave2D}}\quad\\
Now that the local well/ill-posedness is clarified in the energy space for dimension $d\geq2$, our next task in this paper is to seek for the ``largest possible spaces'' in which we have local well-posedness for the Cauchy problem associated to the model \eqref{Wave2D}. Recall that we have the embeddings
\begin{equation}
 H^1(\R^2)\hookrightarrow B^1_{2,\infty}(\R^2)\hookrightarrow H^s(\R^2),\quad s<1\,.
\end{equation}
The next theorem show the failure of the well posedness in spaces slightly bigger than the energy space in the case $q=2$. This means that the Cauchy problem posed either in $B_{2,\infty}^1$ or  $H^s$ with $s<1$ are supercritical at those regularity level. More specifically
\begin{thm}
 \label{IP-2D}
Assume that $q=2$. Let ${\mathcal W}:=\Big\{\, u\in L^2\quad\mbox{s.t.}\quad
\nabla u\in L^{2,\infty}\,\Big\}$ where $L^{2,\infty}$ is the classical Lorentz space\footnote{It is defined by its norm $\|u\|_{L^{2,\infty}}:=\dsup_{\sigma>0}\Big(\sigma\;\mbox{meas}^{1/2}\{\,|u(x)|>\sigma\}\Big)$.}. Then
\begin{enumerate}
\item[1)] There exists a sequence $(\varphi_k)$ in ${\mathcal
W}$ and a sequence $(t_k)$ in $(0,1)$ satisfying
$$
\|\varphi_k\|_{{\mathcal W}}\longrightarrow 0,\quad
t_k\longrightarrow 0,
$$
and such that any weak solution $u_k$ of (\ref{Wave2D}) with initial
data $(\varphi_k,0)$ satisfies
$$
\dlim_{k\rightarrow\infty}\;\|\partial_t u_k(t_k)\|_{L^{2,\infty}}=\infty.
$$
 \item[2)] There exists a sequence $(\varphi_k)$ in ${\mathcal
B}^1_{2,\infty}$ and a sequence $(t_k)$ in $(0,1)$ satisfying
$$
\|\varphi_k\|_{{\mathcal B}^1_{2,\infty}}\longrightarrow 0,\quad
t_k\longrightarrow 0,
$$
and such that any weak solution $u_k$ of (\ref{Wave2D}) with initial
data $(\varphi_k,0)$ satisfies
$$
\dlim_{k\rightarrow\infty}\;\|\partial_t u_k(t_k)\|_{{\mathcal
B}^0_{2,\infty}}=\infty.
$$
In particular, the flow fails to be continuous at $0$ in the ${\mathcal W}\times L^{2,\infty}$ topology or
${\mathcal B}^1_{2,\infty}\times {\mathcal B}^0_{2,\infty}$
topology.
\item[3)] Let $s<1$. There exists a sequence $(\varphi_k)$ in
$H^s$ and a sequence $(t_k)$ in $(0,1)$ satisfying
$$
\|\varphi_k\|_{H^s}\longrightarrow 0,\quad t_k\longrightarrow 0,
$$
and such that any weak solution $u_k$ of (\ref{Wave2D}) with initial
data $(\varphi_k,0)$ satisfies
$$
\dlim_{k\rightarrow\infty}\;\|\partial_t
u_k(t_k)\|_{H^{s-1}}=\infty.
$$
In particular, the flow fails to be continuous at $0$ in the
$H^s\times H^{s-1}$ topology.
\end{enumerate}
\end{thm}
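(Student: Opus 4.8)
The plan is to realize, once and for all, the strategy announced in the introduction: feed the equation \eqref{nlkg} (which is \eqref{Wave2D} with $q=2$) one–step data, use finite speed of propagation to replace the PDE by its ODE inside a backward light cone, and read off the three statements from a single family of profiles by recomputing one norm each time. Concretely, for a height $a_k\to\infty$ and radii $\rho_k<R_k$ I would take the radial plateau
\[
\varphi_k(x)=a_k\ \ (|x|\le\rho_k),\qquad \varphi_k(x)=a_k\,\frac{\log(R_k/|x|)}{L}\ \ (\rho_k\le|x|\le R_k),\qquad \varphi_k(x)=0\ \ (|x|\ge R_k),
\]
with $L:=\log(R_k/\rho_k)$, and $u_1=0$. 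Since the data equals the constant $(a_k,0)$ on $B(0,\rho_k)$, finite speed of propagation (speed one) forces every weak solution $u_k$ of \eqref{nlkg} to agree, on the cone $\{|x|+t<\rho_k\}$, with the spatially homogeneous solution $\phi_k(t)$ of $\ddot\phi+\phi\,{\rm e}^{4\pi\phi^2}=0$, $\phi(0)=a_k$, $\dot\phi(0)=0$; hence $\partial_t u_k(t,x)=\dot\phi_k(t)$ for $|x|<\rho_k-t$.

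The next step is the ODE analysis. Multiplying by $\dot\phi$ gives $\tfrac12\dot\phi^2+\tfrac{1}{8\pi}({\rm e}^{4\pi\phi^2}-1)=\tfrac{1}{8\pi}({\rm e}^{4\pi a_k^2}-1)$, so $\phi_k$ oscillates on $[-a_k,a_k]$ and attains its maximal speed $|\dot\phi_k|=\bigl(\tfrac{1}{4\pi}({\rm e}^{4\pi a_k^2}-1)\bigr)^{1/2}\sim {\rm e}^{2\pi a_k^2}$ when $\phi_k=0$, first at the quarter–period
\[
t_k=\sqrt{4\pi}\,{\rm e}^{-2\pi a_k^2}\dint_0^{a_k}\frac{d\phi}{\sqrt{1-{\rm e}^{4\pi(\phi^2-a_k^2)}}}\sim \sqrt{4\pi}\,a_k\,{\rm e}^{-2\pi a_k^2}\to 0,
\]
the last integral being $a_k(1+o(1))$. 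I would then fix the parameters so that $a_k\to\infty$ and $a_k\ll L\ll a_k^2$, for instance $R_k=1$ and $L=a_k^{3/2}$, so that $\rho_k={\rm e}^{-a_k^{3/2}}$. With this choice $t_k<\rho_k$ for large $k$, the ball $B(0,\rho_k-t_k)$ is comparable to $B(0,\rho_k)$, and on it $\partial_t u_k(t_k)$ equals the constant $\dot\phi_k(t_k)$ of size $\sim{\rm e}^{2\pi a_k^2}$.

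It then remains to compute norms. On the annulus $\nabla\varphi_k=-\tfrac{a_k}{L}\,x/|x|^2$, and the truncated logarithm has $\|\nabla\varphi_k\|_{L^{2,\infty}}\sim a_k/L$, $\|\varphi_k\|_{L^2}\sim a_k/L$, $\|\varphi_k\|_{B^1_{2,\infty}}\sim a_k/L$ and $\|\varphi_k\|_{H^s}\sim (a_k/L)R_k^{1-s}\sim a_k/L$; with $L=a_k^{3/2}$ all of these are $\sim a_k^{-1/2}\to0$. For the lower bounds, since $\partial_t u_k(t_k)\equiv\dot\phi_k(t_k)$ on $B(0,\rho_k-t_k)$, the distribution function gives directly $\|\partial_t u_k(t_k)\|_{L^{2,\infty}}\gtrsim {\rm e}^{2\pi a_k^2}\rho_k$, while pairing against a smooth bump $\psi(\cdot/\rho_k)$ supported in that ball --- using the dualities $(B^0_{2,1})^\ast=B^0_{2,\infty}$ and $(H^{1-s})^\ast=H^{s-1}$ together with $\|\psi(\cdot/\rho)\|_{H^\sigma}\sim\rho^{1-\sigma}$ --- yields $\|\partial_t u_k(t_k)\|_{B^0_{2,\infty}}\gtrsim {\rm e}^{2\pi a_k^2}\rho_k$ and $\|\partial_t u_k(t_k)\|_{H^{s-1}}\gtrsim {\rm e}^{2\pi a_k^2}\rho_k^{\,2-s}$. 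Since $\rho_k={\rm e}^{-a_k^{3/2}}$, each right–hand side behaves like ${\rm e}^{2\pi a_k^2-c\,a_k^{3/2}}\to\infty$, which is exactly 1), 2) and 3).

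The part I expect to be genuinely delicate is twofold. First, one must turn the purely local information ``$\partial_t u_k(t_k)$ is a constant on a small ball'' into honest lower bounds for the nonlocal, negative–regularity norms $B^0_{2,\infty}$ and $H^{s-1}$; the test–bump/duality argument above is the clean way to do this, but the bookkeeping of the scaling $\rho^{1-\sigma}$ and of the support condition must be done with care. Second, the comparison with the ODE must be justified for arbitrary weak solutions --- not merely strong ones --- which requires the finite–speed/uniqueness statement on truncated cones (an energy–Gr\"onwall argument using that $u_k=\phi_k$ stays bounded by $a_k$ there). By contrast, the period estimate and the four data–norm computations are routine.
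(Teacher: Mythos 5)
Your skeleton is exactly the paper's mechanism: plateau (``one--step'') data, finite speed of propagation to replace the PDE by the ODE $\ddot\phi+\phi\,{\rm e}^{4\pi\phi^2}=0$ in a backward cone, the quarter-period time $t_k\sim a_k{\rm e}^{-2\pi a_k^2}$ from Lemma \ref{A}, and lower bounds for the negative-regularity norms by pairing against a bump at scale $\rho_k$ (this is literally what the paper does with $v_k={\rm e}^{k/2}v({\rm e}^{k/2}x)$, resp.\ $v_k={\rm e}^{sk/2}v({\rm e}^{k/2}x)$). Part 1) of your argument, the period/speed analysis, and the duality computations are all correct and consistent with the paper. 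The difference is that you run a \emph{single} family with $L=\log(R_k/\rho_k)=a_k^{3/2}$ for all three parts, whereas the paper uses three different data sets (Moser functions $\gamma f_k$ with $\gamma>1$ for part 1, a \emph{smoothed} Moser-type profile for part 2, and $k^{\gamma}f_k$ with $\gamma<\frac{1-s}{2}$ for part 3).

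The genuine gap is your final sentence: the claim that ``the four data--norm computations are routine.'' The estimates $\|\varphi_k\|_{{\mathcal B}^1_{2,\infty}}\lesssim a_k/L$ and $\|\varphi_k\|_{H^s}\lesssim a_k/L$ for \emph{all} $s<1$ are precisely where the difficulty of this theorem lives, and the routine tools fail for your choice of parameters. Indeed, $\nabla\varphi_k=-\frac{a_k}{L}\,\frac{x}{|x|^2}\,\chi_{\rho_k\le|x|\le 1}$ has $\|\nabla\varphi_k\|_{L^2}\sim \frac{a_k}{L}\sqrt{L}=a_k^{1/4}\to\infty$, so the trivial embedding $L^2\hookrightarrow\dot{\mathcal B}^0_{2,\infty}$ gives nothing; and interpolation yields only $\|\varphi_k\|_{H^s}\lesssim \|\varphi_k\|_{L^2}^{1-s}\|\varphi_k\|_{H^1}^{s}\sim a_k L^{\frac{s}{2}-1}=a_k^{\frac{3s}{4}-\frac12}$, which does \emph{not} tend to zero once $s\ge 2/3$. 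What is actually needed is the uniform-in-$\rho$ frequency-localized bound $\sup_{0<\rho<1}\big\|\frac{x}{|x|^2}\chi_{\rho\le|x|\le1}\big\|_{\dot{\mathcal B}^0_{2,\infty}}<\infty$, equivalently $\|\Delta_j\nabla\varphi_k\|_{L^2}\lesssim a_k/L$ uniformly in $j$. The paper flags this as ``the main difficulty,'' smooths the cutoffs, and proves it via the Fourier estimate of Proposition \ref{fourier2}, Corollary \ref{unifBesov}, and the Runst--Sickel multiplier Theorem \ref{Bourdaud} (to handle the angular factor $x/r$); your proposal contains no substitute for this step. It is repairable within your framework: either write $\frac{x}{|x|^2}\chi_{\rho\le|x|\le1}=F_1-F_\rho$ with $F_\lambda(x)=\frac{x}{|x|^2}\chi_{|x|\le\lambda}$, note that each $F_\lambda$ is a rescaling of $F_1$ leaving the 2D $\dot{\mathcal B}^0_{2,\infty}$ norm invariant, and prove $F_1\in\dot{\mathcal B}^0_{2,\infty}$ from the homogeneity of $\widehat{x/|x|^2}$; or decouple the parameters as the paper does, taking for part 3 a much taller plateau ($a_k=k^{\gamma+1/2}$, $L=k/2$, $\gamma<\frac{1-s}{2}$) so that lossy interpolation suffices. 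As written, though, parts 2) and 3) (for $s\ge 2/3$) are unproven, and the two issues you single out as delicate (the duality pairings and the weak-solution/finite-speed point) are not where the real obstruction lies.
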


This theorem can be seen as a consequence of the following general result about arbitrary $1\leq q<\infty$. Indeed, equation \eqref{Wave2D} is subcritical at the regularity of the Besov space ${\mathcal B}^1_{2,q'}$\;\footnote{As usually, $q'$ denotes the Lebesgue conjugate exponent of $q$.} but supercritical at the $H^s$ regularity level with $s<1$. More precisely
\begin{thm}
\label{WP-IP}
Assume that $1\leq q<\infty$.
\begin{enumerate}
 \item[1)] Let $(u_0,u_1)\in {\mathcal
B}^1_{2,q'}\times{\mathcal B}^0_{2,q'}$\footnote{As we will see in the proof, when $q'=\infty$ the appropriate space is $\tilde{\mathcal B}^1_{2,\infty}$, the closure of smooth compactly supported function in the usual Besov space ${\mathcal
B}^1_{2,\infty}$.}. There exists a time
$T>0$ and a unique solution $u$ of (\ref{Wave2D}) with initial data
$(u_0,u_1)$ in the space $C_T({\mathcal B}^1_{2,q'})\cap
C^1_T({\mathcal B}^0_{2,q'})$.
\item[2)] Let $s<1$. There exists a sequence $(\varphi_k)$ in
$H^s$ and a sequence $(t_k)$ in $(0,1)$ satisfying
$$
\|\varphi_k\|_{H^s}\longrightarrow 0,\quad t_k\longrightarrow 0,
$$
and such that any weak solution $u_k$ of (\ref{Wave2D}) with initial
data $(\varphi_k,0)$ satisfies
$$
\dlim_{k\rightarrow+\infty}\;\|\partial_t
u_k(t_k)\|_{H^{s-1}}=\infty.
$$
In particular, the flow fails to be continuous at $0$ in the
$H^s\times H^{s-1}$ topology.
\end{enumerate}
\end{thm}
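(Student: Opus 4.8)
The two parts are logically independent, so I would treat them separately. For part 1), the plan is to run a fixed-point/contraction argument on the Duhamel formulation
$$ u(t) = \cos(t\sqrt{-\Delta})u_0 + \frac{\sin(t\sqrt{-\Delta})}{\sqrt{-\Delta}}u_1 - \int_0^t \frac{\sin((t-\tau)\sqrt{-\Delta})}{\sqrt{-\Delta}}\,F'(u(\tau))\,d\tau $$
in the space $X_T := C_T(\mathcal{B}^1_{2,q'})\cap C^1_T(\mathcal{B}^0_{2,q'})$. Since the half-wave propagators act boundedly on the Besov scale and the operator $\frac{\sin(t\sqrt{-\Delta})}{\sqrt{-\Delta}}$ gains one derivative, the linear part is controlled directly, and the whole scheme reduces to a single nonlinear estimate of the form $\|F'(u)\|_{\mathcal{B}^0_{2,q'}}\leq G(\|u\|_{\mathcal{B}^1_{2,q'}})$ for some continuous $G$. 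The decisive ingredient, which explains why $\mathcal{B}^1_{2,q'}$ is the right space, is the critical Besov--Moser--Trudinger embedding $\mathcal{B}^1_{2,q'}(\R^2)\hookrightarrow \exp L^{q}$, i.e. control of $\int_{\R^2}(e^{c|u|^q}-1)\,dx$ by a function of $\|u\|_{\mathcal{B}^1_{2,q'}}$; this matches exactly the growth $e^{4\pi|u|^q}$ of the nonlinearity in \eqref{Wave2D}, and the degenerate endpoint $q'=\infty$ (i.e. $q=1$) is precisely why the footnote replaces $\mathcal{B}^1_{2,\infty}$ by its closure $\tilde{\mathcal{B}}^1_{2,\infty}$.

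I expect the main obstacle of part 1) to be establishing the nonlinear estimate in the Besov norm $\mathcal{B}^0_{2,q'}$ rather than merely in $L^2$: one has to combine a Littlewood--Paley/paraproduct decomposition of the composition $u\mapsto F'(u)$ with the exponential Orlicz bound above, in the spirit of a Moser estimate for exponential nonlinearities. Once this estimate is in hand, contraction on a small ball of $X_T$ for $T$ small yields both existence and uniqueness, the smallness of $T$ being supplied by the factor $T$ produced by the time integral.

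For part 2), I would implement the step-function mechanism announced in the introduction. Fix a smooth radial $\psi$ with $\psi\equiv1$ on the unit ball and compact support, and set $\varphi_k := a_k\,\psi(\cdot/r_k)$ with amplitudes $a_k\to\infty$ and radii $r_k\to0$ to be tuned; note $\varphi_k$ is smooth with compact support, hence of finite (though large) energy, so a weak solution exists and satisfies the energy inequality. A scaling computation gives $\|\varphi_k\|_{H^s}\sim a_k r_k^{1-s}$ (the homogeneous part dominating for small $r_k$ since $s<1$), so imposing $a_k r_k^{1-s}\to0$ forces $\|\varphi_k\|_{H^s}\to0$. Because $\varphi_k\equiv a_k$ on $B(0,r_k)$ and $\partial_t\varphi_k=0$, finite speed of propagation forces every such weak solution $u_k$ to coincide inside the backward light cone $\{|x|<r_k-t\}$ with the solution $a(t)$ of the spatially homogeneous ODE $\ddot a+F'(a)=0$, $a(0)=a_k$, $\dot a(0)=0$. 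Energy conservation $\tfrac12\dot a^2+F(a)=F(a_k)$ then yields $|\dot a|=\sqrt{2(F(a_k)-F(a))}$, so at the quarter-period
$$ t_k := \int_0^{a_k}\frac{da}{\sqrt{2(F(a_k)-F(a))}} $$
one has $a(t_k)=0$ and maximal speed $\dot a(t_k)=\pm\sqrt{2F(a_k)}$. Splitting this integral at $a_k/2$ and using that $F$ and $F'$ are exponentially large at $a_k$ shows $t_k\to0$ much faster than $r_k$, so the light cone is nonempty at time $t_k$ and $\partial_t u_k(t_k,\cdot)$ equals the large constant $\sqrt{2F(a_k)}\sim e^{2\pi a_k^q}$ on $B(0,r_k-t_k)$.

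It remains to convert this into a lower bound on $\|\partial_t u_k(t_k)\|_{H^{s-1}}$. Since $s-1<0$, I would argue by duality, testing against $g_k:=\rho_k^{-s}\eta(\cdot/\rho_k)$ with $\rho_k:=r_k-t_k\sim r_k$ and $\eta\geq0$ a fixed bump supported in the unit ball; scaling gives $\|g_k\|_{H^{1-s}}\lesssim1$ while $\langle\partial_t u_k(t_k),g_k\rangle=\sqrt{2F(a_k)}\int g_k\sim\sqrt{F(a_k)}\,\rho_k^{2-s}$, whence
$$ \|\partial_t u_k(t_k)\|_{H^{s-1}}\gtrsim\sqrt{F(a_k)}\,r_k^{2-s}\sim e^{2\pi a_k^q}\,r_k^{2-s}. $$
Writing $r_k^{1-s}=\varepsilon_k/a_k$ with $\varepsilon_k\to0$ (so the data are small) turns the right-hand side into $e^{2\pi a_k^q}(\varepsilon_k/a_k)^{(2-s)/(1-s)}$, and since the exponential beats every power of $a_k$, a mild choice such as $\varepsilon_k=1/a_k$ makes it tend to $+\infty$ while keeping $t_k\ll r_k$. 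The main obstacle here is the bookkeeping of the three competing constraints --- $\|\varphi_k\|_{H^s}\to0$, $t_k<r_k$ with $t_k\to0$, and the blow-up of the negative-order norm --- together with a rigorous justification that any weak solution (not assumed unique or regular) genuinely obeys finite speed of propagation in this supercritical regime; this last point should follow from the energy inequality and the sign condition $uF'(u)\geq0$ in \eqref{general}.
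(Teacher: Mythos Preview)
Your treatment of part~2) is essentially the paper's own argument: step-function data constant on a small ball, coincidence with the ODE solution inside the backward light cone by finite speed of propagation, choice of $t_k$ as the quarter-period, and a duality computation against a rescaled bump to extract the $H^{s-1}$ lower bound. The paper uses a specific piecewise-logarithmic profile rather than a smooth bump, but the mechanism and the bookkeeping are identical.

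Part~1), however, has a genuine gap. A direct contraction in $C_T(\mathcal{B}^1_{2,q'})\cap C^1_T(\mathcal{B}^0_{2,q'})$ based on a Besov--Trudinger embedding cannot close for data of arbitrary size: that embedding only controls $\int(e^{c|u|^q}-1)\,dx$ for $c$ below a threshold of order $\|u\|_{\mathcal{B}^1_{2,q'}}^{-q}$, whereas the nonlinearity carries the \emph{fixed} exponent $4\pi$. In particular an estimate of the form $\|F'(u)\|_{L^2}\le G(\|u\|_{\mathcal{B}^1_{2,q'}})$ is false---the Moser functions $f_k$ have $\|f_k\|_{H^1}\lesssim1$ while $\|f_k e^{4\pi f_k^2}\|_{L^2}\to\infty$---so the map you propose is not even bounded on norm-balls, and no factor of $T$ can save the contraction. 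The paper's proof is built precisely to get around this: it splits $(u_0,u_1)=(u_0,u_1)_{<N}+(u_0,u_1)_{>N}$ into a regular part (solved first, in $H^1$ when $q<2$ and in $H^2$ when $q>2$) and a remainder that is \emph{small} in $\mathcal{B}^1_{2,q'}$, for which the Trudinger-type bound does apply with the required exponent. In addition, the fixed point for the remainder is not run in the bare space $C_T(\mathcal{B}^1_{2,q'})$ but in a space carrying an auxiliary Strichartz norm $L^4_T(\mathcal{C}^{1/4-\varepsilon})$; combined with the logarithmic inequality $\|u\|_{L^\infty}\lesssim\|u\|_{\mathcal{B}^1_{2,q'}}\log^{1/q}(e+\|u\|_{\mathcal{C}^\alpha}/\|u\|_{\mathcal{B}^1_{2,q'}})$, this provides pointwise control of the exponential and is what actually closes the estimates. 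Your scheme is missing both the splitting and the Strichartz/log-inequality mechanism.
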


\begin{rem}
\label{Schrod1} The same well-posedness results can be derived for the
corresponding two dimensional nonlinear Schr\"odinger equations.
\end{rem}
We end this section with the following board which clarify the picture of well/ill-posedness.
\begin{table}[!ht]
\tiny{\begin{tabular}{|c|c|c|c|c|c|c|}
\hline
\backslashbox[2cm]{Data's regularity}{Setting} &  {\tiny$d\geq 3$ and \eqref{3+}}&{\tiny$d=2$ and \eqref{2Dinfty}} &{\tiny $d=2$ and $q<2$}&{\tiny$d=q=2$ and $E> 1$}&{\tiny$d=q=2$ and $E\leq 1$}\\
\hline
\tiny{$H^1$} & \tiny{WIP} &\tiny{IP}&  \tiny{GWP}  \&  \tiny{SWP}& \tiny{LWP}  \&  \tiny{WIP}&\tiny{GWP}  \&  \tiny{SWP}\\
\hline
\tiny{$\mathcal{B}^1_{2,\infty}$} & \tiny{IP} & \tiny{IP} &\tiny{LWP}&\tiny{IP}&\tiny{IP}\\
\hline
\tiny{$H^s$ with $s<1$} &\tiny{IP}&\tiny{IP}&\tiny{IP}&\tiny{IP}&\tiny{IP}\\
\hline
\end{tabular}}
\end{table}

\section{Background Material}

In this section we will fix the notation, state the basic definitions and recall some known and useful tools.


\subsection{Besov spaces}
For the convenience of the reader, we recall the definition and some properties of Besov spaces.
\begin{defi}
\label{homo-bes}
Let $\chi$ be a function in $\mathcal{S}(\R^d)$ such that ${\chi}(\xi)=1$ for $|\xi|\leq 1$ and $\widehat{\phi}(\xi)=0$ for $|\xi|> 2$. Define the function $\psi(\xi)={\chi}(\xi/2)-{\chi}(\xi)$. Then the (homogeneous) frequency localization operators are defined by
$$
\widehat{\dot{\triangle}_j\,u}(\xi)=\psi(2^{-j}\xi)\widehat{u}(\xi)\,\mbox{ for all }j\in\Z\;.
$$
If $s<\frac{d}{p}$, then $u$ belongs to the homogenous Besov space $\dot{\mathcal B}^s_{p,q}(\R^d)$ if and only if the partial sum $\sum_{-m}^m\,\dot{\triangle}_j\,u$ converges to $u$ as a tempered distribution and the sequence $\left(2^{sj}\,\|\triangle_j\,u\|_{L^p}\right)$ belongs to $\ell^q(\Z)$.
\end{defi}

To define the inhomogeneous Besov spaces, we need an inhomogeneous frequency localization.
\begin{defi}
 \label{inhom-bes}
The inhomogeneous frequency localization operators are defined by
\begin{eqnarray*}
 \widehat{\triangle_j u}(\xi)&=&\; \left\{
\begin{array}{cllll}0 \quad&\mbox{if}&\quad
j\leq -2,\\\\
\chi(\xi)\widehat{u}(\xi) \quad&\mbox{if}&\quad j=-1
,\\\\
\psi(2^{-j}\xi)\,\widehat{u}(\xi)\quad&\mbox{if}&\quad j\geq 0.
\end{array}
\right.
\end{eqnarray*}
For $N\in\N$, set $S_N=\dsum_{j\leq N-1}\;\triangle_j$.
We say that $u$ belongs to the inhomogeneous Besov space ${\mathcal B}^s_{p,q}(\R^d)$ if and only if $u\in{\mathcal S}'$ and $\|u\|_{{\mathcal B}^s_{p,q}}<\infty$ where
\begin{eqnarray*}
\|u\|_{{\mathcal B}^s_{p,q}}&=&\; \left\{
\begin{array}{cllll} \|\triangle_{-1}u\|_{L^p}+\Big(\dsum_{j=0}^\infty\,2^{jqs}\,\|\triangle_{j}u\|_{L^p}^q\Big)^{1/q}\quad&\mbox{if}&\quad
q<\infty,\\\\
\|\triangle_{-1}u\|_{L^p}+\dsup_{j\geq 0}\,2^{js}\,\|\triangle_{j}u\|_{L^p} \quad&\mbox{if}&\quad q=\infty\,.
\end{array}
\right.
\end{eqnarray*}
\end{defi}

We recall without proof the following properties of the operators $\triangle_j$ and Besov spaces
(see \cite{RS}, \cite{Tr1}, \cite{Tr2} and \cite{Tr3}).
\begin{itemize}
\item {\sf Bernstein's inequality.} For all $1\leq p\leq q\leq\infty$ we have
$$
\|\triangle_j\,u\|_{L^q(\R^d)}\,\leq\,C\,2^{jd(\frac{1}{p}-\frac{1}{q})}\,\|\triangle_j\,u\|_{L^p(\R^d)}\,.
$$
 \item {\sf Embeddings.}
\begin{equation}
 \label{embedd}
{\mathcal B}_{p,q}^s(\mathbb{R}^d)\hookrightarrow{\mathcal
B}_{p_1,q_1}^{s_1}(\mathbb{R}^d)
\end{equation}
 whenever $$ s-\frac{d}{p}\geq s_1-\frac{d}{p_1},\quad 1\leq p\leq p_1\leq\infty,\quad 1\leq q\leq q_1\leq\infty,\quad s,s_1\in\mathbb{R}\,.
 $$
\item {\sf Equivalent norm.} For $s>0$ we have
\begin{equation}
 \label{normBes}
\|u\|_{{\mathcal B}^s_{p,q}}\,\approx\,\|u\|_{L^p}+   \|\nabla u\|_{\dot{\mathcal B}^{s-1}_{p,q}}\,.
\end{equation}
\item Sobolev spaces and H\"older spaces are special cases of Besov spaces, that is $H^s={\mathcal B}^s_{2,2}$ and $C^\sigma={\mathcal B}^\sigma_{\infty,\infty}$ for non-integer $\sigma>0$.
\end{itemize}

We shall also use the following result of Runst and Sickel \cite{RS} about functions which operate by pointwise multiplication in Besov spaces.
\begin{thm}[{\cite{RS} Theorem 4.6.2}]
\label{Bourdaud}
Let $|s|<d/2$. Any function belonging to $\dot{\mathcal B}^{d/2}_{2,\infty}\cap L^\infty(\R^d)$ is a pointwise multiplier in the Besov space $\dot{\mathcal B}^{s}_{2,q}(\R^d)$.
\end{thm}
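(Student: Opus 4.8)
The plan is to prove the quantitative multiplier estimate
$\|gf\|_{\dot{\mathcal B}^s_{2,q}}\lesssim\big(\|g\|_{L^\infty}+\|g\|_{\dot{\mathcal B}^{d/2}_{2,\infty}}\big)\|f\|_{\dot{\mathcal B}^s_{2,q}}$
for every $f\in\dot{\mathcal B}^s_{2,q}$, which is exactly the content of the pointwise multiplier property. The tool is Bony's paraproduct decomposition. Writing $\dot S_{j-1}=\sum_{k\le j-2}\dot\triangle_k$, I split $gf=T_gf+T_fg+R(g,f)$ with $T_gf=\sum_j\dot S_{j-1}g\,\dot\triangle_jf$, $T_fg=\sum_j\dot S_{j-1}f\,\dot\triangle_jg$ and $R(g,f)=\sum_{|j-k|\le1}\dot\triangle_jg\,\dot\triangle_kf$, and estimate the three pieces separately. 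Throughout I set $a_j:=2^{js}\|\dot\triangle_jf\|_{L^2}$, so that $\|(a_j)\|_{\ell^q}=\|f\|_{\dot{\mathcal B}^s_{2,q}}$, and I reduce the $\dot{\mathcal B}^s_{2,q}$ norm of each piece to a discrete convolution of $(a_j)$ against a fixed, summable kernel, to which Young's inequality for sequences applies.

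The two paraproducts are the easy terms. Each block $\dot S_{j-1}g\,\dot\triangle_jf$ is spectrally supported in an annulus of size $2^j$, so only $O(1)$ values of $j$ contribute to $\dot\triangle_\ell(T_gf)$; bounding $\|\dot S_{j-1}g\|_{L^\infty}\lesssim\|g\|_{L^\infty}$ and using H\"older gives $2^{\ell s}\|\dot\triangle_\ell(T_gf)\|_{L^2}\lesssim\|g\|_{L^\infty}\sum_{|j-\ell|\le N}2^{(\ell-j)s}a_j$, a convolution against a finitely supported kernel, hence $\|T_gf\|_{\dot{\mathcal B}^s_{2,q}}\lesssim\|g\|_{L^\infty}\|f\|_{\dot{\mathcal B}^s_{2,q}}$ for every $s\in\R$. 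For $T_fg$ I invoke Bernstein's inequality to get $\|\dot S_{j-1}f\|_{L^\infty}\lesssim\sum_{k\le j-2}2^{kd/2}\|\dot\triangle_kf\|_{L^2}=\sum_{k\le j-2}2^{k(d/2-s)}a_k$, together with $\|\dot\triangle_jg\|_{L^2}\le2^{-jd/2}\|g\|_{\dot{\mathcal B}^{d/2}_{2,\infty}}$; after the annulus bookkeeping this produces the output kernel $2^{(\ell-j)s}$ and the input kernel $2^{(k-j)(d/2-s)}\mathbf 1_{k\le j-2}$, both summable precisely when $s<d/2$, giving $\|T_fg\|_{\dot{\mathcal B}^s_{2,q}}\lesssim\|g\|_{\dot{\mathcal B}^{d/2}_{2,\infty}}\|f\|_{\dot{\mathcal B}^s_{2,q}}$.

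The remainder is the main obstacle, and it is where the lower restriction $s>-d/2$ enters. Here $\dot\triangle_jg\,\dot\triangle_kf$ with $|j-k|\le1$ is supported only in a ball of radius $\sim2^j$, so $\dot\triangle_\ell R(g,f)$ collects all $j\gtrsim\ell$, and the naive $L^\infty\times L^2$ estimate (which treats $g$ merely as a $\dot{\mathcal B}^0_{\infty,\infty}$ function) produces the divergent kernel $\sum_{j\ge\ell-N}2^{(\ell-j)s}$ whenever $s\le0$. The fix is to keep the full high-frequency decay of $g$: estimate the product first in $L^1$ through $\|\dot\triangle_jg\|_{L^2}\|\tilde{\dot\triangle}_jf\|_{L^2}\lesssim2^{-jd/2}\|g\|_{\dot{\mathcal B}^{d/2}_{2,\infty}}\|\tilde{\dot\triangle}_jf\|_{L^2}$, then pay a factor $2^{\ell d/2}$ to return from $L^1$ to $L^2$ via Bernstein. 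This replaces the bad kernel by $2^{(\ell-j)(s+d/2)}\mathbf 1_{j\ge\ell-N}$, summable in $j-\ell$ exactly when $s+d/2>0$, i.e.\ $s>-d/2$; Young's inequality then yields $\|R(g,f)\|_{\dot{\mathcal B}^s_{2,q}}\lesssim\|g\|_{\dot{\mathcal B}^{d/2}_{2,\infty}}\|f\|_{\dot{\mathcal B}^s_{2,q}}$. Adding the three bounds, the constraints $s<d/2$ (from $T_fg$) and $s>-d/2$ (from $R$) combine to the hypothesis $|s|<d/2$, which completes the multiplier estimate.
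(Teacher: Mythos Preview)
The paper does not actually prove this statement: it is quoted verbatim as Theorem~4.6.2 of Runst--Sickel \cite{RS} and used as a black box (the only thing the paper does is verify the application that $x/r\in\dot{\mathcal B}^1_{2,\infty}\cap L^\infty(\R^2)$). So there is no ``paper's own proof'' to compare against.

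That said, your argument is correct and is exactly the standard paraproduct proof of this multiplier theorem. The three pieces are handled cleanly: $T_gf$ uses only $g\in L^\infty$ and works for all $s$; $T_fg$ uses Bernstein on the low-frequency factor $\dot S_{j-1}f$ together with $\|\dot\triangle_jg\|_{L^2}\le2^{-jd/2}\|g\|_{\dot{\mathcal B}^{d/2}_{2,\infty}}$ and produces the summable kernel $2^{m(s-d/2)}$, $m\ge2$, which needs $s<d/2$; the remainder $R(g,f)$ is put into $L^1$ via Cauchy--Schwarz, then lifted back to $L^2$ by Bernstein at frequency $2^\ell$, giving the kernel $2^{-m(s+d/2)}$, $m\ge -N$, which needs $s>-d/2$. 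The Young-inequality bookkeeping is right, and the two constraints combine to $|s|<d/2$. This is essentially the argument one finds in \cite{RS} (or in standard references such as Bahouri--Chemin--Danchin), so your proof is both correct and aligned with the cited source.
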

An important application of this Theorem\footnote{We are grateful to {\sf G\'erard Bourdaud} for providing us this reference and a proof of the application.} which will be used in the sequel is the fact that the function $f(x):=\frac{x}{r}$ operates on $\dot{\mathcal B}^0_{2,\infty}(\R^2)$ via pointwise multiplication. Indeed, according to Theorem \ref{Bourdaud} it suffices to show that $f$ belongs to $\dot{\mathcal B}^{1}_{2,\infty}({\mathbb R}^2)$. For this, note that $\widehat{f}$ is an homogenous distribution of degree $-2$, belonging to the $C^\infty$ class outside the origin. We can then define $g\in {\mathcal S}$ by $\widehat{g} = \psi \widehat{f}$. Hence $\triangle_j\,f (x)= g(2^jx)$ and $\|\triangle_j\,f\|_{L^2} = 2^{-j} \|g\|_{L^2}$.
\subsection{2D Strichartz estimate and Logarithmic inequality}
Recall the following 2D Strichartz estimate.
\begin{prop}
 \label{Str}
\begin{equation}
\label{Stz}
\|u\|_{L^4((0,T);B^{1/4}_{\infty,2})} \lesssim \|\partial_t^2 u -\Delta u + u\|_{L^1((0,T);L^2)}
 + \|u(0)\|_{H^1} + \|\partial_t u(0)\|_{L^2}.
\end{equation}
\end{prop}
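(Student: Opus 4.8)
The plan is to read \eqref{Stz} as a standard endpoint Strichartz estimate for the two-dimensional Klein--Gordon operator $\partial_t^2-\Delta+1$, and to derive it from a frequency-localized dispersive bound via the $TT^*$ method. Write $G:=\partial_t^2u-\Delta u+u$ and $\langle D\rangle:=(1-\Delta)^{1/2}$. By Duhamel's formula,
$$u(t)=\cos(t\langle D\rangle)u(0)+\frac{\sin(t\langle D\rangle)}{\langle D\rangle}\,\partial_tu(0)+\dint_0^t\frac{\sin((t-s)\langle D\rangle)}{\langle D\rangle}\,G(s)\,ds,$$
so by the triangle inequality it suffices to prove the homogeneous estimate $\|e^{\pm it\langle D\rangle}f\|_{L^4((0,T);B^{1/4}_{\infty,2})}\lesssim\|f\|_{H^1}$ (controlling the first term), the same bound applied to $\langle D\rangle^{-1}\partial_tu(0)$ (controlling the second term, which then costs only $\|\partial_tu(0)\|_{L^2}$ since $\|\langle D\rangle^{-1}g\|_{H^1}\approx\|g\|_{L^2}$), and the corresponding inhomogeneous bound for the Duhamel integral.

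First I would decompose dyadically. By definition of the Besov norm, $\|v\|_{B^{1/4}_{\infty,2}}^2=\|\triangle_{-1}v\|_{L^\infty}^2+\dsum_{j\geq0}2^{j/2}\|\triangle_jv\|_{L^\infty}^2$, so everything reduces to estimating each block $\triangle_je^{it\langle D\rangle}f$. The core ingredient is the dispersive bound
$$\|\triangle_je^{it\langle D\rangle}f\|_{L^\infty_x}\lesssim 2^{3j/2}\,|t|^{-1/2}\,\|\triangle_jf\|_{L^1_x},\qquad j\geq0,$$
obtained by stationary phase applied to the oscillatory integral with phase $x\cdot\xi\pm t\sqrt{1+|\xi|^2}$ on the annulus $|\xi|\sim2^j$. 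Combining this with the conservation $\|\triangle_je^{it\langle D\rangle}f\|_{L^2}=\|\triangle_jf\|_{L^2}$, the $TT^*$ argument together with the Hardy--Littlewood--Sobolev inequality (the kernel $|t-s|^{-1/2}$ maps $L^{4/3}_t\to L^4_t$) yields the frequency-localized Strichartz estimate
$$\|\triangle_je^{it\langle D\rangle}f\|_{L^4_tL^\infty_x}\lesssim 2^{3j/4}\,\|\triangle_jf\|_{L^2}.$$

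Next I would sum over $j$. Since $4\geq2$, Minkowski's inequality lets me exchange $L^4_t$ and $\ell^2_j$, giving
$$\|e^{it\langle D\rangle}f\|_{L^4_tB^{1/4}_{\infty,2}}\lesssim\Big(\dsum_j2^{j/2}\,\|\triangle_je^{it\langle D\rangle}f\|_{L^4_tL^\infty_x}^2\Big)^{1/2}\lesssim\Big(\dsum_j2^{j/2}\,2^{3j/2}\,\|\triangle_jf\|_{L^2}^2\Big)^{1/2},$$
and since $2^{j/2}\cdot2^{3j/2}=2^{2j}$ the last sum is $\approx\|f\|_{\dot H^1}^2$; the low-frequency block $\triangle_{-1}$ is harmless because $\langle D\rangle\approx1$ there and, over the bounded interval $(0,T)$, Bernstein's inequality bounds its contribution by $\|f\|_{L^2}$. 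This proves the homogeneous estimate. The inhomogeneous term is handled by Minkowski's integral inequality, applying the homogeneous estimate to the propagated data $\langle D\rangle^{-1}G(s)$ for each fixed $s$ and integrating in $s$ to produce the $L^1((0,T);L^2)$ norm of $G$, with the Christ--Kiselev lemma (legitimate since $4>1$) used to pass from the full integral to the retarded one $\dint_0^t$.

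The main obstacle is the endpoint character $r=\infty$: the weight $2^{3j/4}$ per block is exactly the admissible one, but the estimate in $L^4_tL^\infty_x$ itself is false, which is precisely why the target space must be the Besov space $B^{1/4}_{\infty,2}$, whose $\ell^2$ summability absorbs the logarithmic failure of $L^\infty$. Establishing the dispersive decay with the correct high-frequency weight $2^{3j/2}$ for the Klein--Gordon phase, and verifying that the resulting derivative count matches $H^1$ on the right-hand side, is the delicate part; at low frequencies the Klein--Gordon flow is in fact better behaved than the wave flow, so no further difficulty arises there.
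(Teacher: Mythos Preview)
The paper does not actually prove Proposition~\ref{Str}; it is stated in the ``Background Material'' section with the words ``Recall the following 2D Strichartz estimate'' and is treated as a known fact, with no argument supplied.

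Your proof is correct and follows the standard route to such estimates: Duhamel reduction, the frequency-localized dispersive bound $\|\triangle_je^{it\langle D\rangle}f\|_{L^\infty}\lesssim 2^{3j/2}|t|^{-1/2}\|\triangle_jf\|_{L^1}$ (the wave-type decay for the Klein--Gordon phase at high frequency), the $TT^*$ argument with Hardy--Littlewood--Sobolev, Minkowski to pass from $\ell^2_jL^4_t$ to $L^4_t\ell^2_j$, and Christ--Kiselev for the retarded integral. Your remark that the Lebesgue endpoint $L^4_tL^\infty_x$ fails and must be replaced by the Besov target $B^{1/4}_{\infty,2}$ is exactly the reason the statement is phrased this way. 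You have in fact supplied more than the paper does; the only addition one might want is a reference (e.g.\ Ginibre--Velo) where the dispersive inequality for the Klein--Gordon propagator with the stated high-frequency weight is carried out in detail.
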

Remark that using the embedding \eqref{embedd}, we can replace $B^{1/4}_{\infty,2}$ with the H\"older space $C^{1/4}$.\\

The following lemma shows that we can estimate the $L^\infty$ norm
by a stronger norm but with a weaker growth (namely logarithmic).
\begin{lem}
\label{logBeso}
Let $0<\alpha<1$ and $1\leq q\leq\infty$. There exists a constant $C$ such that
\begin{equation}
\label{logbeso}
\|u\|_{L^\infty}\;\leq\; C\|u\|_{{\mathcal B}^{1}_{2,q'}}\,\log^{\frac{1}{q}}\Big({\rm e}+\frac{\|u\|_{\mathcal{C}^\alpha}}{\|u\|_{{\mathcal B}^{1}_{2,q'}}}\Big)\,.
\end{equation}
\end{lem}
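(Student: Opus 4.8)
The plan is to run a Littlewood--Paley decomposition and split the frequencies at a level $N\in\N$ to be optimized at the end. Writing $u=\sum_{j\geq -1}\triangle_j u$, the triangle inequality gives
$$
\|u\|_{L^\infty}\leq \sum_{j=-1}^{N}\|\triangle_j u\|_{L^\infty}+\sum_{j>N}\|\triangle_j u\|_{L^\infty}.
$$
The conceptual point is that in dimension two the space $\mathcal B^1_{2,q'}$ sits \emph{exactly} at the critical Sobolev threshold for the $L^\infty$ embedding, so the low-frequency block can be treated by Bernstein's inequality but just barely fails to sum; quantifying this failure over a finite window of frequencies is precisely what produces the logarithmic factor, while the extra $\mathcal C^\alpha$-norm controls the tail.

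For the low frequencies I would invoke Bernstein's inequality (with $d=2$, $p=2$, $q=\infty$) to get $\|\triangle_j u\|_{L^\infty}\lesssim 2^{j}\|\triangle_j u\|_{L^2}$, and then apply H\"older's inequality in the index $j$ over the finite range $\{-1,\dots,N\}$, with conjugate exponents $q$ and $q'$:
$$
\sum_{j=-1}^N 2^j\|\triangle_j u\|_{L^2}\leq (N+2)^{1/q}\Big(\sum_{j=-1}^N\big(2^j\|\triangle_j u\|_{L^2}\big)^{q'}\Big)^{1/q'}\lesssim (N+2)^{1/q}\,\|u\|_{\mathcal B^1_{2,q'}}.
$$
(The $j=-1$ term is harmless since $2^{-1}\|\triangle_{-1}u\|_{L^2}\leq\|\triangle_{-1}u\|_{L^2}$ is already part of the inhomogeneous norm.) I would stress that this bound is uniform in $1\leq q\leq\infty$, covering the endpoints $q=1$ (where it is the $\ell^1$--$\ell^\infty$ pairing, giving the full factor $N+2$) and $q=\infty$ (where no counting factor appears and one recovers the embedding $\mathcal B^1_{2,1}\hookrightarrow L^\infty$). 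For the high frequencies I would use $\|\triangle_j u\|_{L^\infty}\leq 2^{-j\alpha}\|u\|_{\mathcal C^\alpha}$, valid because $\mathcal C^\alpha=\mathcal B^\alpha_{\infty,\infty}$, and sum the geometric series, using $\alpha>0$, to obtain $\sum_{j>N}\|\triangle_j u\|_{L^\infty}\lesssim 2^{-N\alpha}\|u\|_{\mathcal C^\alpha}$.

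Combining the two estimates yields, for every integer $N\geq -1$,
$$
\|u\|_{L^\infty}\lesssim (N+2)^{1/q}\,\|u\|_{\mathcal B^1_{2,q'}}+2^{-N\alpha}\,\|u\|_{\mathcal C^\alpha}.
$$
The final step is to optimize in $N$: choosing $N$ equal to the integer part of $\tfrac1\alpha\log_2\!\big(\|u\|_{\mathcal C^\alpha}/\|u\|_{\mathcal B^1_{2,q'}}\big)$ balances the two terms and delivers $\|u\|_{L^\infty}\lesssim \|u\|_{\mathcal B^1_{2,q'}}\,\log^{1/q}\!\big({\rm e}+\|u\|_{\mathcal C^\alpha}/\|u\|_{\mathcal B^1_{2,q'}}\big)$, the constant depending only on $\alpha$ (and absorbing $\alpha^{-1/q}$). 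The main technical care lies exactly in this optimization: one must handle separately the regime $\|u\|_{\mathcal C^\alpha}\lesssim\|u\|_{\mathcal B^1_{2,q'}}$, where the optimal $N$ is non-positive and one instead uses the trivial bound $\|u\|_{L^\infty}\lesssim\|u\|_{\mathcal C^\alpha}$ (the additive ${\rm e}$ inside the logarithm then absorbing the constant since $\log^{1/q}({\rm e})=1$), and one must pass from the discrete choice of $N$ back to the continuous logarithm carefully so as to land on $\log^{1/q}$ rather than $1+\log^{1/q}$. Everything else is routine.
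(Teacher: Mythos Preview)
Your proof is correct and follows essentially the same route as the paper: Littlewood--Paley splitting at level $N$, Bernstein plus H\"older in $j$ for the low frequencies yielding the $N^{1/q}$ factor, geometric summation using the $\mathcal C^\alpha$ norm for the high frequencies, and the same logarithmic choice of $N$. You are simply more explicit than the paper about the H\"older step, the endpoint values of $q$, and the small-ratio regime, all of which the paper absorbs into the implicit constants without comment.
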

Note that similar inequalities appeared in Br\'ezis-Gallouet \cite{BrGa} and has been improved (with respect to the best constant) in \cite{IMM} in the following sense.
\begin{lem}[\cite{IMM}, Theorem 1.3]
\label{Hmu}
 Let $0<\alpha<1$.  For any $\lambda>\frac{1}{2\pi\alpha}$ and
any $0<\mu\leq1$, a constant $C_{\lambda}>0$ exists such that, for
any function $u\in H^1(\R^2)\cap{\mathcal C}^\alpha(\R^2)$
\begin{equation}
\label{H-mu} \|u\|^2_{L^\infty}\leq
\lambda\|u\|_{H_\mu}^2\log\left(C_{\lambda} +
\frac{8^\alpha\mu^{-\alpha}\|u\|_{{\mathcal
C}^{\alpha}}}{\|u\|_{H_\mu}}\,\,\,\right),
\end{equation}
where $H_\mu$ is defined by the norm $\|u\|_{H_\mu}^2:=\|\nabla u\|_{L^2}^2+\mu^2\|u\|_{L^2}^2$.
\end{lem}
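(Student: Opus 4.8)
The plan is to establish \eqref{H-mu} by a Littlewood--Paley decomposition with an optimized frequency cutoff -- the classical strategy behind Br\'ezis--Gallouet type inequalities -- but to track every constant carefully so as to reach the sharp threshold $\lambda>\frac{1}{2\pi\alpha}$. Fix an integer $J$ to be chosen later and split $u=S_J u+\dsum_{j\geq J}\triangle_j u$. Since the spectrum of $S_J u$ lies in a ball $\{|\xi|\lesssim 2^J\}$, Fourier inversion in $\R^2$ gives $\|S_J u\|_{L^\infty}\leq\frac{1}{2\pi}\dint_{|\xi|\lesssim 2^J}|\widehat{u}(\xi)|\,d\xi$, and a Cauchy--Schwarz estimate against the weight $(\mu^2+|\xi|^2)$ yields
$$
\|S_J u\|_{L^\infty}\leq \frac{1}{2\pi}\Big(\dint_{|\xi|\lesssim 2^J}\frac{d\xi}{\mu^2+|\xi|^2}\Big)^{1/2}\|u\|_{H_\mu}=\frac{1}{2\sqrt{\pi}}\sqrt{\log\big(1+C\,4^J/\mu^2\big)}\;\|u\|_{H_\mu},
$$
using the exact planar computation $\dint_{|\xi|\leq R}\frac{d\xi}{\mu^2+|\xi|^2}=\pi\log(1+R^2/\mu^2)$. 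This low-frequency piece is what carries the sharp multiplicative constant, the fixed constant $C$ in the support radius affecting only lower-order terms.

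Next I would control the high-frequency tail by the H\"older norm, via the Besov characterization $\|u\|_{{\mathcal C}^\alpha}\approx\dsup_j 2^{j\alpha}\|\triangle_j u\|_{L^\infty}$, which gives
$$
\Big\|\dsum_{j\geq J}\triangle_j u\Big\|_{L^\infty}\leq \|u\|_{{\mathcal C}^\alpha}\dsum_{j\geq J}2^{-j\alpha}=\frac{2^{-J\alpha}}{1-2^{-\alpha}}\,\|u\|_{{\mathcal C}^\alpha}.
$$
The cutoff $J$ is then fixed so that this remainder is comparable to $\|u\|_{H_\mu}$, i.e. $2^{J\alpha}\sim\frac{\|u\|_{{\mathcal C}^\alpha}}{(1-2^{-\alpha})\|u\|_{H_\mu}}$. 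With this choice, and writing $4^J/\mu^2=(2^{J\alpha})^{2/\alpha}\mu^{-2}$, the argument of the logarithm in the low-frequency bound becomes
$$
\log\big(1+C\,4^J/\mu^2\big)\approx\frac{2}{\alpha}\log\Big(\frac{\mu^{-\alpha}\|u\|_{{\mathcal C}^\alpha}}{(1-2^{-\alpha})\|u\|_{H_\mu}}\Big),
$$
which reproduces exactly the $\mu^{-\alpha}$ scaling and the ratio $\|u\|_{{\mathcal C}^\alpha}/\|u\|_{H_\mu}$ appearing in \eqref{H-mu}.

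It then remains to combine and square. From $\|u\|_{L^\infty}\leq\|S_J u\|_{L^\infty}+\big\|\dsum_{j\geq J}\triangle_j u\big\|_{L^\infty}$ and $(a+b)^2\leq(1+\varepsilon)a^2+(1+\varepsilon^{-1})b^2$, the leading term is $(1+\varepsilon)\frac{1}{4\pi}\cdot\frac{2}{\alpha}\log(\cdots)\|u\|_{H_\mu}^2$, where crucially $\frac{1}{4\pi}\cdot\frac{2}{\alpha}=\frac{1}{2\pi\alpha}$. Given any $\lambda>\frac{1}{2\pi\alpha}$ I would choose $\varepsilon$ so that $(1+\varepsilon)\frac{1}{2\pi\alpha}\leq\lambda$; the remaining $O(\|u\|_{H_\mu}^2)$ contributions (the cross term and the balanced high-frequency piece) are absorbed by enlarging the additive constant $C_\lambda$ inside the logarithm, using $\log(C_\lambda+x)\geq\log C_\lambda$, while the combinatorial factor $\frac{1}{1-2^{-\alpha}}$ is rewritten as $8^\alpha$ up to harmless adjustments.

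The main obstacle I anticipate is precisely this sharp-constant bookkeeping: the factor $\frac{2}{\alpha}$ coming from the optimal cutoff must meet the factor $\frac{1}{4\pi}$ coming from the planar kernel integral \emph{without loss} to produce $\frac{1}{2\pi\alpha}$, so every inequality that touches the leading term has to be kept at its sharp form. This is exactly why the statement demands the strict inequality $\lambda>\frac{1}{2\pi\alpha}$ rather than equality, since the unavoidable cross-term and tail contributions force an $\varepsilon$-loss that one compensates only by strengthening $C_\lambda$, never the multiplicative constant.
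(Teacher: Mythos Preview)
The paper does not supply its own proof of this lemma: it is quoted verbatim from \cite{IMM}, Theorem~1.3, and used here only as a refinement of the preceding Lemma~\ref{logBeso}. There is therefore no in-paper argument to compare your proposal against.

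That said, your outline is the standard route to such sharp Br\'ezis--Gallouet inequalities and is essentially the argument of \cite{IMM}: a low/high frequency split, the exact planar computation $\dint_{|\xi|\le R}\frac{d\xi}{\mu^2+|\xi|^2}=\pi\log(1+R^2/\mu^2)$ to pin down the constant $\frac{1}{4\pi}$, control of the tail via the Besov description of ${\mathcal C}^\alpha$, and an optimized cutoff producing the factor $\frac{2}{\alpha}$. The only point worth tightening is the last sentence: the factor $\frac{1}{1-2^{-\alpha}}$ is \emph{not} uniformly comparable to $8^\alpha$ as $\alpha\to 0$, so ``rewriting'' it as $8^\alpha$ is not harmless in that regime. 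Here, however, $\alpha$ is fixed from the outset and $C_\lambda$ is allowed to depend on it (implicitly, through the constraint $\lambda>\frac{1}{2\pi\alpha}$), so the discrepancy is indeed absorbed into $C_\lambda$; just be explicit that this is where the $\alpha$-dependence of the constant enters.
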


\begin{proof}[Proof of Lemma \ref{logBeso}] Write
$$
u=\dsum_{j=-1}^{N-1}\,\triangle_j\,u+\dsum_{j=N}^\infty\,\triangle_j\,u,
$$
where $N$ is a nonnegative integer which will be chosen later. Using Bernstein's inequality, we get
\begin{eqnarray*}
\|u\|_{L^\infty}&\leq&C\dsum_{j=-1}^{N-1}\,2^j\,\|\triangle_j\,u\|_{L^2}+
\dsum_{j=N}^\infty\,2^{-j\alpha}\left(2^{j\alpha}\,\|\triangle_j\,u\|_{L^\infty}\right)\\
&\leq&C\,\Big(N^{1/q}\,\|u\|_{{\mathcal B}^{1}_{2,q'}}+\frac{2^{-N\alpha}}{1-2^{-\alpha}}\,\|u\|_{\mathcal{C}^\alpha}\Big)\,.
\end{eqnarray*}
Choose
$$
N\sim\,\frac{1}{\alpha\log 2}\log\Big({\rm e}+\frac{\|u\|_{\mathcal{C}^\alpha}}{\|u\|_{{\mathcal B}^{1}_{2,q'}}}\Big),
$$
we obtain \eqref{logbeso} as desired.
\end{proof}

\subsection{Oscillating second order ODE}
Here we recall a classical result about ordinary differential equations.
\begin{lem}
\label{l-ode} Let $F:\R\longrightarrow\R$ be a smooth function and
consider the following ODE
\begin{equation}
\label{ode1} \ddot{x}(t)+F'(x(t))=0,\quad (x(0),
\dot{x}(0))=(x_0,0),\quad x_0>0.
\end{equation}
The equation (\ref{ode1}) has a periodic non constant solution if
and only if the function $G: y\longmapsto 2\Big(F(x_0)-F(y)\Big)$
has two simples distincts zeros $\alpha$ and $\beta$ with
$\alpha\leq x_0\leq \beta$ and such that $G$ has no zero in the
interval $]\alpha,\beta[$. In this case, the period is given by
$$
T=2\;\dint_{\alpha}^{\beta}\;\dfrac{dy}{\sqrt{G(y)}}=\sqrt{2}\dint_{\alpha}^{\beta}\;\dfrac{dy}{\sqrt{F(x_0)-F(y)}}.
$$
\end{lem}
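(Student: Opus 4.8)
The plan is to exploit the conservation of energy for the autonomous system \eqref{ode1}. Multiplying \eqref{ode1} by $\dot x$ and integrating in time, the quantity $\frac12\dot x(t)^2+F(x(t))$ is constant along trajectories; evaluating at $t=0$, where $\dot x(0)=0$ and $x(0)=x_0$, fixes this constant to be $F(x_0)$. Hence every solution satisfies the first-order relation
$$
\dot x(t)^2=2\big(F(x_0)-F(x(t))\big)=G(x(t)),
$$
so the motion is confined to the region $\{G\geq 0\}$ and the turning points (where $\dot x=0$) are exactly the zeros of $G$. Note in particular that $G(x_0)=0$, so $x_0$ is always one of the turning points, and combined with the absence of interior zeros this forces $x_0\in\{\alpha,\beta\}$.

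For the necessity direction, I would argue that a non-constant periodic solution must oscillate between its minimum value $\alpha$ and its maximum value $\beta$, with $\alpha<\beta$ and $\alpha\le x_0\le\beta$. At both extrema the velocity vanishes, so $\alpha$ and $\beta$ are zeros of $G$; and $G$ can have no zero in the open interval $(\alpha,\beta)$, for otherwise the trajectory would meet a turning point strictly inside its range and, by $\dot x^2=G(x)$, could not traverse the whole interval. The key point is to rule out a double zero: if, say, $G'(\beta)=0$, i.e.\ $F'(\beta)=0$, then $(\beta,0)$ is an equilibrium of \eqref{ode1}, and by the uniqueness theorem for ODEs the trajectory can only approach it as $t\to\infty$, contradicting periodicity. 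Hence $\alpha$ and $\beta$ are simple zeros.

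For the sufficiency direction, given simple zeros $\alpha\le x_0\le\beta$ with $G>0$ on $(\alpha,\beta)$, I would separate variables on the branch where $\dot x>0$, writing $dt=dx/\sqrt{G(x)}$ and computing the time to pass from $\alpha$ to $\beta$ as $\dint_\alpha^\beta dx/\sqrt{G(x)}$. Because the zeros are simple, $G(x)\sim G'(\alpha)(x-\alpha)$ near $\alpha$ (with $G'(\alpha)=-2F'(\alpha)>0$) and similarly near $\beta$, so the integrand behaves like $(x-\alpha)^{-1/2}$ and $(\beta-x)^{-1/2}$ and the integral converges: this half-period is finite. I would then construct the solution by inverting $t\mapsto x(t)$ on $[\alpha,\beta]$, and invoke the time-reversal symmetry of \eqref{ode1}: since $\dot x$ vanishes at both turning points, the solution is even about each instant at which it reaches $\alpha$ or $\beta$, and composing these two reflections yields periodicity with period equal to twice the half-period. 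This produces exactly
$$
T=2\dint_{\alpha}^{\beta}\dfrac{dy}{\sqrt{G(y)}}=\sqrt{2}\dint_{\alpha}^{\beta}\dfrac{dy}{\sqrt{F(x_0)-F(y)}}.
$$

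The main obstacle I anticipate is handling the simple-zero condition on both sides. In the sufficiency direction it is precisely what guarantees convergence of the period integral near the endpoints, a double zero making the integral diverge (the physical manifestation of an infinite approach time to an equilibrium); in the necessity direction, ruling out double zeros requires the uniqueness theorem to convert ``reaching an equilibrium'' into ``taking infinite time''. Care is also needed to promote the formal separation-of-variables computation into a genuine global $C^2$ solution of \eqref{ode1} and to verify rigorously, through the reflection symmetries, that the constructed function is actually $T$-periodic rather than merely satisfying the integral relation.
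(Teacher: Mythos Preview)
The paper does not actually prove this lemma; it is introduced with the phrase ``Here we recall a classical result about ordinary differential equations'' and then simply stated. Your argument via energy conservation, separation of variables, and time-reversal symmetry is the standard one and is correct.

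One small remark on the necessity direction: your treatment of the ``no interior zero'' condition can be sharpened. Since $\dot x^2=G(x)\ge 0$ along the trajectory and the range of $x$ is $[\alpha,\beta]$, one has $G\ge 0$ on $[\alpha,\beta]$; hence any zero $y^*\in(\alpha,\beta)$ would automatically be a local minimum of $G$, so $G'(y^*)=0$, i.e.\ $F'(y^*)=0$, and $(y^*,0)$ is an equilibrium. The uniqueness argument you already use for the endpoints then applies verbatim. This is slightly cleaner than saying the solution ``could not traverse the whole interval'', which as written does not quite distinguish simple from double interior zeros.
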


\subsection{Moser-Trudinger inequalities}
It is known that the Sobolev space $H^1(\R^2)$ is embedded in all Lebesgue spaces $L^p$ for $2\leq p<\infty$ but not in $L^\infty$. Moreover, $H^1$ functions are in the so-called Orlicz space i.e. their exponentials are integrable for every growth less than ${\rm e}^{u^2}$. Precisely, we have the following Moser-Trudinger inequality (see \cite{Tan, Ru} and references therein).
\begin{prop}
\label{p} Let $\alpha\in (0,4\pi)$. A constant $c_\alpha$ exists
such that
\begin{equation}
\label{e5} \int_{\R^2}\,\left({\rm e}^{\alpha|u(x)|^2}-1\right)\,dx\leq c_\alpha \|u\|_{L^2}^2
\end{equation}
for all $u$ in $H^1(\R^2)$ such that $\|\nabla
u\|_{L^2(\R^2)}\leq1$. Moreover, if $\alpha\geq 4\pi$, then
(\ref{e5}) is false.
\end{prop}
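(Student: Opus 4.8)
The plan is to reduce to radial data by rearrangement, split the integral according to whether $|u|$ is small or large, treat the small part by convexity and the large part by the classical Moser--Trudinger inequality on a ball, and finally establish optimality by testing \eqref{e5} against the explicit Moser sequence. First I would symmetrize: let $u^*$ be the Schwarz (symmetric decreasing) rearrangement of $u$, which is well defined because $u\in L^2(\R^2)$ forces every superlevel set $\{|u|>\sigma\}$ to have finite measure. Rearrangement preserves the distribution function, so $\|u^*\|_{L^2}=\|u\|_{L^2}$ and, since $t\mapsto {\rm e}^{\alpha t^2}-1$ is increasing and vanishes at $0$, also $\dint_{\R^2}({\rm e}^{\alpha (u^*)^2}-1)=\dint_{\R^2}({\rm e}^{\alpha u^2}-1)$, while the P\'olya--Szeg\H{o} inequality gives $\|\nabla u^*\|_{L^2}\le\|\nabla u\|_{L^2}\le1$. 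Hence it suffices to treat $u=u^*$, radial, nonnegative and nonincreasing. Writing $\R^2=\Omega_1\cup\Omega_2$ with $\Omega_1=\{u\le1\}$ and $\Omega_2=\{u>1\}$, convexity of $s\mapsto {\rm e}^{\alpha s}-1$ on $[0,1]$ gives ${\rm e}^{\alpha u^2}-1\le({\rm e}^\alpha-1)\,u^2$ on $\Omega_1$, whence $\dint_{\Omega_1}({\rm e}^{\alpha u^2}-1)\le({\rm e}^\alpha-1)\|u\|_{L^2}^2$.

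For the large part, since $u$ is radial nonincreasing, $\Omega_2$ is a ball $B_\rho$ (and if it is empty we are done), and $w:=(u-1)_+$ lies in $H^1_0(B_\rho)$, is radial, and satisfies $\|\nabla w\|_{L^2}\le\|\nabla u\|_{L^2}\le1$. On $\Omega_2$ one has $u^2=(w+1)^2\le(1+\varepsilon)w^2+(1+\varepsilon^{-1})$ for any $\varepsilon>0$; choosing $\varepsilon$ so small that $\beta:=\alpha(1+\varepsilon)<4\pi$ (possible since $\alpha<4\pi$) yields ${\rm e}^{\alpha u^2}\le {\rm e}^{\alpha(1+\varepsilon^{-1})}\,{\rm e}^{\beta w^2}$ there. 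The classical Moser--Trudinger inequality on a two dimensional ball --- whose constant is scale invariant, because $\|\nabla\cdot\|_{L^2}$ is invariant in $\R^2$ while both $\dint({\rm e}^{\beta w^2}-1)$ and $|B_\rho|$ scale like $\rho^2$ --- furnishes a universal $C_0$ with $\dint_{B_\rho}({\rm e}^{\beta w^2}-1)\le C_0|B_\rho|$, so that $\dint_{B_\rho}{\rm e}^{\beta w^2}\le(1+C_0)|B_\rho|$. Since Chebyshev's inequality gives $|B_\rho|=|\Omega_2|\le\|u\|_{L^2}^2$, we get $\dint_{\Omega_2}({\rm e}^{\alpha u^2}-1)\le {\rm e}^{\alpha(1+\varepsilon^{-1})}(1+C_0)\|u\|_{L^2}^2$, and adding the two contributions proves \eqref{e5} with $c_\alpha=({\rm e}^\alpha-1)+{\rm e}^{\alpha(1+\varepsilon^{-1})}(1+C_0)$.

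For the failure when $\alpha\ge4\pi$, I would test against the Moser functions $M_n$ supported in the unit disc, equal to $(2\pi)^{-1/2}\sqrt{\log n}$ on $\{|x|\le1/n\}$ and equal to $(2\pi\log n)^{-1/2}\log(1/|x|)$ on $\{1/n\le|x|\le1\}$. These satisfy $\|\nabla M_n\|_{L^2}=1$ and, by a direct computation, $\|M_n\|_{L^2}^2\to0$. On the core disc $\alpha M_n^2=\frac{\alpha}{2\pi}\log n$, so
\[ \dint_{\{|x|\le1/n\}}({\rm e}^{\alpha M_n^2}-1)=\pi\big(n^{\frac{\alpha}{2\pi}-2}-n^{-2}\big), \]
which tends to $\pi$ when $\alpha=4\pi$ and to $+\infty$ when $\alpha>4\pi$. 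In both cases the ratio of the left-hand side of \eqref{e5} to $\|M_n\|_{L^2}^2$ blows up, so no finite $c_\alpha$ can exist; this is what makes $4\pi$ sharp.

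The only substantial external ingredient is the critical bounded-domain Moser--Trudinger inequality with a constant controlled solely by the measure of the domain, which is where the real difficulty sits; I would invoke it from \cite{Tan,Ru}, or else prove it by Moser's own scheme: reduce to radial $w$ and substitute $r={\rm e}^{-t/2}$, so that the constraint becomes $\dint_0^\infty\dot w^2\,dt\le1$ and the exponential integral becomes $\dint_0^\infty {\rm e}^{w^2-t}\,dt$, which is then optimized by a one dimensional argument. The remaining points are routine: the well-definedness of the rearrangement (guaranteed by $u\in L^2$) and the trace facts ensuring $w\in H^1_0(B_\rho)$.
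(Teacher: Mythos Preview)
The paper does not supply its own proof of this proposition; it is stated as background and attributed to \cite{Tan, Ru}. Your argument is correct and follows essentially the standard route (close to Adachi--Tanaka): symmetrize via Schwarz rearrangement, split at the level set $\{u=1\}$, treat the small part by the elementary convexity bound $e^{\alpha s}-1\le(e^\alpha-1)s$ on $[0,1]$, and control the large part by applying the bounded-domain Moser--Trudinger inequality to $w=(u-1)_+\in H^1_0(B_\rho)$ together with Chebyshev's inequality $|\{u>1\}|\le\|u\|_{L^2}^2$, which is precisely the step that converts the domain measure into the desired $L^2$ factor. The sharpness argument via the Moser sequence is also the standard one; note that these are (up to normalization) the same functions $f_k$ the paper introduces later in the proof of Theorem~\ref{2Dsupercritical}. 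One small remark: since the critical Moser--Trudinger inequality on a ball already holds at the endpoint exponent $4\pi$ with a scale-invariant constant, you do not actually need the $\varepsilon$-room and could take $\beta=4\pi$ directly; your version with $\beta<4\pi$ is of course also fine.
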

 We point out that $\alpha=4\pi$ becomes admissible in
(\ref{e5}) if we require $\|u\|_{H^1(\R^2)}\leq1$ rather than
$\|\nabla u\|_{L^2(\R^2)}\leq1$. Precisely, we have
$$\sup_{\|u\|_{H^1(\R^2)}\leq
1}\;\;\dint_{\R^2}\,\left({\rm e}^{4\pi|u(x)|^2}-1\right)\,dx<\infty$$ and this is false for
$\alpha>4\pi$. See \cite{Ru} for more details.\\
The above estimates obviously control any exponential power with smaller growth (i.e. $q<2$). However no estimate holds if the growth is higher (i.e $q>2$). Hence, the value $q=2$ is also another criticality threshold for problems involving such nonlinearities.

\subsection{Some technical lemmas}

\begin{lem}
\label{lem-bound} For any $0<a<1$, denote by
$$
I(a)=\dint_a^1\;r\;{\rm e}^{4 a^2\log^2 r}\;dr.
$$
Then, we have
\begin{equation}
\label{bound}
 I(a)\leq 2.
\end{equation}
\end{lem}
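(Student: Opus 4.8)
The plan is to estimate the integrand pointwise and then integrate over the interval $[a,1]$, whose length is at most $1$. The crux is that, although $\log^2 r$ becomes large when $r$ is close to $a$ and $a$ is small, the prefactor $a^2$ damps it uniformly, so the exponential factor stays bounded independently of $a$.

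First I would note that for $a\le r\le 1$ one has $\log a\le\log r\le 0$, hence $\log^2 r\le\log^2 a$, and of course $r\le 1$. Therefore the integrand obeys $r\,{\rm e}^{4a^2\log^2 r}\le {\rm e}^{4a^2\log^2 a}$ for every $r$ in the range of integration, and integrating gives
\[ I(a)\le (1-a)\,{\rm e}^{4a^2\log^2 a}\le {\rm e}^{4a^2\log^2 a}. \]
Next I would carry out a one–variable optimization of the exponent. Setting $f(a)=a^2\log^2 a$, the derivative $f'(a)=2a\log a\,(\log a+1)$ vanishes on $(0,1)$ only at $a={\rm e}^{-1}$, where $f({\rm e}^{-1})={\rm e}^{-2}$, while $f(a)\to 0$ as $a\to 0^+$ and as $a\to 1^-$. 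Hence $\sup_{0<a<1}a^2\log^2 a={\rm e}^{-2}$, so $4a^2\log^2 a\le 4{\rm e}^{-2}$ for all admissible $a$. Since $4{\rm e}^{-2}\approx 0.54<\log 2$, we conclude ${\rm e}^{4a^2\log^2 a}\le {\rm e}^{4/{\rm e}^2}<2$, which proves $I(a)\le 2$ (in fact $I(a)<2$).

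The only place the argument could go wrong is the regime $a\to 0^+$, where $\log^2 a$ diverges; the naive worry is that the integrand blows up near $r=a$. The uniform bound $\sup_{0<a<1}a^2\log^2 a={\rm e}^{-2}$ is precisely what resolves this, since it shows the competition between $a^2\to 0$ and $\log^2 a\to\infty$ is won by the $a^2$ factor. If a sharper constant were wanted, I would instead substitute $r={\rm e}^{-s}$ to rewrite $I(a)=\int_0^{-\log a}{\rm e}^{4a^2 s^2-2s}\,ds$, use $4a^2 s^2\le 4a^2(-\log a)\,s$ on $[0,-\log a]$ together with $\sup_{0<a<1}4a^2(-\log a)=2/{\rm e}<2$, and compare with a convergent exponential integral, obtaining the stronger bound $I(a)\le (2-2/{\rm e})^{-1}<1$.
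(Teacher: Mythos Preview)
Your argument is correct and considerably more direct than the paper's own proof. The paper proceeds by the change of variable $s=-2a\log r$, rewrites $I(a)$ as $\frac{1}{2a}\,{\rm e}^{-1/(4a^2)}\int_0^{-2a\log a}{\rm e}^{(s-1/(2a))^2}\,ds$, and then bounds this in terms of an error-function–type integral via the elementary estimate $\int_0^A{\rm e}^{y^2}\,dy\le {\rm e}^{A^2}/A$, which yields exactly $I(a)\le 2$. By contrast, you simply bound the integrand pointwise by ${\rm e}^{4a^2\log^2 a}$ and then optimize the exponent by a one-variable calculus argument, obtaining the sharper numerical bound ${\rm e}^{4/{\rm e}^2}<2$. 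Your approach avoids any special-function manipulations and gives a strictly better constant; the paper's approach, while less sharp here, parallels the substitutions used in the neighbouring Lemmas \ref{l3}--\ref{A}, where a pointwise bound of your type would not be adequate and the error-function estimate is genuinely needed. Your final remark about the alternative substitution $r={\rm e}^{-s}$ and the bound $I(a)\le(2-2/{\rm e})^{-1}<1$ is also correct.
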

\begin{proof}[Proof of Lemma \ref{lem-bound}]
Changing the variable $s=-2a\log r$ yields
\begin{eqnarray*}
I(a)&=&\frac{1}{2a}\;{\rm e}^{-\frac{1}{4 a^2}}\;\dint_0^{-2a\log
a}\;{\rm e}^{(s-\frac{1}{2a})^2}\;ds\\
&\leq&2A\;{\rm e}^{-A^2}\;\dint_0^{A}\;{\rm e}^{y^2}\;dy,
\end{eqnarray*}
where $A=\frac{1}{2a}$. Now, using the following estimate true for
all nonnegative $A$
\begin{equation}
\nonumber \dint_0^A\,{\rm e}^{y^2}\,dy\;\leq\; \frac{{\rm e}^{A^2}}{A},
\end{equation}
we obtain (\ref{bound}) as desired.
\end{proof}

\begin{lem}
\label{l3} For any $a\geq 1$ and $k\geq 1$, denote by
$$
I(a,k)=\dint_{{\rm e}^{-\frac{k}{2}}}^1\, r {\rm e}^{\frac{4 a^2}{k}\log^2
r}\,dr.
$$
Then, we have
\begin{equation}
\label{Ia} I(a,k)\leq2 {\rm e}^{(a^2-1)k}.
\end{equation}
\end{lem}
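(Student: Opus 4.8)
The plan is to follow exactly the strategy used in the proof of Lemma~\ref{lem-bound}: transform the integral, via a change of variable tailored to the exponent, into a Gaussian-type integral to which the elementary bound $\int_0^A {\rm e}^{y^2}\,dy\leq {\rm e}^{A^2}/A$ (already invoked above) applies. First I would substitute $s=-\frac{2a}{\sqrt{k}}\log r$, the natural analogue of $s=-2a\log r$, chosen precisely so that $\frac{4a^2}{k}\log^2 r$ becomes $s^2$. Then $r={\rm e}^{-\frac{\sqrt{k}}{2a}s}$, the endpoints $r={\rm e}^{-k/2}$ and $r=1$ correspond to $s=a\sqrt{k}$ and $s=0$, and a short computation gives
$$
I(a,k)=\frac{\sqrt{k}}{2a}\dint_0^{a\sqrt{k}}{\rm e}^{\,s^2-\frac{\sqrt{k}}{a}s}\,ds.
$$

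Next I would complete the square, $s^2-\frac{\sqrt{k}}{a}s=\left(s-\frac{\sqrt{k}}{2a}\right)^2-\frac{k}{4a^2}$, and translate by $y=s-\frac{\sqrt{k}}{2a}$ to obtain
$$
I(a,k)=\frac{\sqrt{k}}{2a}\,{\rm e}^{-\frac{k}{4a^2}}\dint_{-\frac{\sqrt{k}}{2a}}^{\,b}{\rm e}^{y^2}\,dy,\qquad b:=\sqrt{k}\,\frac{2a^2-1}{2a}.
$$
Since $a\geq 1$ forces $2a^2-1\geq 1$, the upper endpoint $b$ dominates the absolute value $\frac{\sqrt{k}}{2a}$ of the lower one; using that ${\rm e}^{y^2}$ is even, I would enlarge the domain to $[-b,b]$ and apply the elementary estimate to get $\int_{-\frac{\sqrt{k}}{2a}}^{b}{\rm e}^{y^2}\,dy\leq 2\int_0^b{\rm e}^{y^2}\,dy\leq 2\,{\rm e}^{b^2}/b$.

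Substituting back, the prefactors collapse neatly, $\frac{\sqrt{k}}{2a}\cdot\frac{2}{b}=\frac{2}{2a^2-1}$, and the exponent is handled by the algebraic identity $(2a^2-1)^2-1=4a^2(a^2-1)$, which yields $b^2-\frac{k}{4a^2}=(a^2-1)k$ exactly. Hence $I(a,k)\leq\frac{2}{2a^2-1}\,{\rm e}^{(a^2-1)k}$, and since $\frac{2}{2a^2-1}\leq 2$ for $a\geq 1$, the bound \eqref{Ia} follows. The only delicate points are the bookkeeping in the substitution and completion of the square, and noting that the hypothesis $a\geq 1$ is used twice—once to symmetrize the integral ($b\geq\frac{\sqrt{k}}{2a}$) and once to absorb the final constant into $2$. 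The clean cancellation in the exponent via $(2a^2-1)^2-1=4a^2(a^2-1)$ is what makes the stated estimate come out precisely.
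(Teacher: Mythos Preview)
Your proof is correct and follows essentially the same route as the paper's: the same substitution $s=-\frac{2a}{\sqrt{k}}\log r$, the same completion of the square and translation, the same use of $a\geq 1$ to symmetrize the integration interval, and the same elementary bound $\int_0^A e^{y^2}\,dy\leq e^{A^2}/A$. The only cosmetic difference is that you display the intermediate constant $\frac{2}{2a^2-1}$ explicitly before bounding it by $2$.
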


\begin{proof}[Proof of Lemma \ref{l3}]
Changing the variable $u=-\frac{2a}{\sqrt k}\log r$ yields
$$
I(a,k)=\frac{\sqrt{k}}{2a}\,{\rm e}^{-\frac{k}{4a^2}}\,
\dint_{0}^{a\sqrt{k}}\,\,{\rm e}^{(u-\frac{\sqrt{k}}{2a})^2}\,du.
$$
Changing once more the variable $v=u-\frac{\sqrt k}{2a}$ yields

$$
I(a,k)=\frac{\sqrt{k}}{2a}\,{\rm e}^{-\frac{k}{4a^2}}\,
\dint_{-\frac{\sqrt{k}}{2a}}^{(2a^2-1)\frac{\sqrt{k}}{2a}}\,{\rm e}^{v^2}dv.
$$
Hence, for any $a\geq1$ we have

$$
I(a,k)\leq\frac{\sqrt{k}}{a}\,{\rm e}^{-\frac{k}{4a^2}}\,
\dint_{0}^{(2a^2-1)\frac{\sqrt{k}}{2a}}\,{\rm e}^{v^2}dv.
$$
Now, using the following estimate true for all nonnegative $A$
\begin{equation}
\nonumber \dint_0^A\,{\rm e}^{u^2}\,du\,\,\leq\frac{{\rm e}^{A^2}-1}{A}\leq
\frac{{\rm e}^{A^2}}{A},
\end{equation}
we obtain (\ref{Ia}) as desired.
\end{proof}

\begin{lem}
\label{Alamda} For any $\lambda>0$ and $A>\lambda$, denote by
$$
J(A,\lambda)=\dint_{A-\frac{\lambda^2}{A}}^{A}\,\frac{du}{\sqrt{{\rm e}^{A^2}-{\rm e}^{u^2}}}.
$$
Then, we have
\begin{equation}
\label{A-lamda} J(A,\lambda) \leq
\frac{A\,{\rm e}^{2\lambda^2}}{A^2-\lambda^2} \,{\rm e}^{-\frac{A^2}{2}}.
\end{equation}
\end{lem}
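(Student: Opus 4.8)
The plan is to first bound the denominator from below and then to resolve the integrable singularity at the upper endpoint $u=A$ by a change of variables. Writing ${\rm e}^{A^2}-{\rm e}^{u^2}={\rm e}^{A^2}\big(1-{\rm e}^{-(A^2-u^2)}\big)$ makes the dependence on $A^2-u^2$ explicit, and I would substitute $w=A^2-u^2$, so that $du=-dw/(2u)$ with $u=\sqrt{A^2-w}$. Since $u$ runs over $[A-\lambda^2/A,\,A]$, the new variable $w$ runs over $[0,W]$ with $W=A^2-(A-\lambda^2/A)^2=2\lambda^2-\lambda^4/A^2\le 2\lambda^2$. This turns the integral into
$$
J(A,\lambda)=\frac{{\rm e}^{-A^2/2}}{2}\dint_0^{W}\frac{dw}{\sqrt{(1-{\rm e}^{-w})\,(A^2-w)}},
$$
which cleanly separates the regular factor $(A^2-w)^{-1/2}$ from the singular factor $(1-{\rm e}^{-w})^{-1/2}$.

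The regular factor is easy: since $A^2-w=u^2\ge (A-\lambda^2/A)^2=\big((A^2-\lambda^2)/A\big)^2$ on the whole range, one has $(A^2-w)^{-1/2}\le A/(A^2-\lambda^2)$, which I would pull out of the integral. The crux is therefore to control $\dint_0^{W}(1-{\rm e}^{-w})^{-1/2}\,dw$, and the natural device is the elementary inequality $1-{\rm e}^{-w}\ge w\,{\rm e}^{-w}$ for $w\ge0$ (immediate from the fact that $w\mapsto 1-{\rm e}^{-w}-w{\rm e}^{-w}$ vanishes at $0$ and has nonnegative derivative $w{\rm e}^{-w}$). This yields $(1-{\rm e}^{-w})^{-1/2}\le {\rm e}^{w/2}/\sqrt{w}$, an integrable bound, whence
$$
\dint_0^{W}\frac{dw}{\sqrt{1-{\rm e}^{-w}}}\le {\rm e}^{W/2}\dint_0^W\frac{dw}{\sqrt w}=2\sqrt{W}\,{\rm e}^{W/2}\le 2\sqrt2\,\lambda\,{\rm e}^{\lambda^2},
$$
using $W\le 2\lambda^2$ in the last step.

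To finish I would absorb the remaining polynomial factor into the exponential via the universal bound $\sqrt2\,\lambda\le {\rm e}^{\lambda^2}$ (the function $\sqrt2\,\lambda\,{\rm e}^{-\lambda^2}$ attains its maximum ${\rm e}^{-1/2}<1$ at $\lambda=1/\sqrt2$), so that $2\sqrt2\,\lambda\,{\rm e}^{\lambda^2}\le 2\,{\rm e}^{2\lambda^2}$. Multiplying the three factors $\tfrac12{\rm e}^{-A^2/2}$, $A/(A^2-\lambda^2)$, and $2{\rm e}^{2\lambda^2}$ reproduces the claimed bound \eqref{A-lamda} exactly.

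The only real obstacle is matching the constants: the singularity at $u=A$ forces one to trade it for the factor $\sqrt W\sim\lambda$, and the argument closes precisely because this stray $\lambda$ is swallowed by ${\rm e}^{\lambda^2}$. A cruder lower bound on ${\rm e}^{A^2}-{\rm e}^{u^2}$ applied without the substitution — for instance ${\rm e}^{A^2}-{\rm e}^{u^2}\ge (A^2-u^2)\,{\rm e}^{u^2}$, giving $\dint (A^2-u^2)^{-1/2}{\rm e}^{-u^2/2}\,du$ — leaves an $\arcsin$-type integral that is harder to reconcile with the stated right-hand side, which is why I favor first passing to the variable $w=A^2-u^2$.
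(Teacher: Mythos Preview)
Your argument is correct and complete: the substitution $w=A^2-u^2$, the bound $(A^2-w)^{-1/2}\le A/(A^2-\lambda^2)$, the inequality $1-{\rm e}^{-w}\ge w\,{\rm e}^{-w}$, and the absorption $\sqrt{2}\,\lambda\le{\rm e}^{\lambda^2}$ all check out and assemble to give exactly \eqref{A-lamda}.

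Your route is genuinely different from the paper's, though. The paper proceeds in one line by integration by parts, writing the integrand as $h(u)g'(u)$ with $h(u)=-1/(u{\rm e}^{u^2})$ and $g'(u)=u{\rm e}^{u^2}/\sqrt{{\rm e}^{A^2}-{\rm e}^{u^2}}$, so that $g(u)=-\sqrt{{\rm e}^{A^2}-{\rm e}^{u^2}}$ is explicit and the boundary term at $u=A$ vanishes; the remaining boundary term and the integral of $h'g$ then produce the stated bound directly. That argument is slicker but somewhat opaque, since the choice of $h$ and $g'$ has to be guessed. Your approach is more transparent: the substitution makes the singularity at $u=A$ manifest as $w^{-1/2}$, the factor $A/(A^2-\lambda^2)$ falls out as the obvious lower bound on $u$, and the only real work is the elementary inequality $1-{\rm e}^{-w}\ge w{\rm e}^{-w}$. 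The price you pay is a short extra step to trade the stray $\sqrt{2}\,\lambda$ for ${\rm e}^{\lambda^2}$, which the integration-by-parts argument avoids. Both approaches are equally valid and equally sharp for the purposes of the paper.
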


\begin{proof}[Proof of Lemma \ref{Alamda}]
Choosing $h(u)=\frac{-1}{u{\rm e}^{u^2}}$ and
$g'(u)=\frac{u{\rm e}^{u^2}}{\sqrt{{\rm e}^{A^2}-{\rm e}^{u^2}}}$, and integrating by
parts, we deduce (\ref{A-lamda}).\end{proof}

\begin{lem}
\label{A} For any $A>1$, denote by
$$
I(A)=\dint_{0}^A\,\frac{du}{\sqrt{{\rm e}^{A^2}-{\rm e}^{u^2}}}.
$$
Then, we have
\begin{equation}
\label{I(A)} I(A)\,\approx\,A\,{\rm e}^{-\frac{A^2}{2}}.
\end{equation}
\end{lem}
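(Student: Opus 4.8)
The plan is to factor the exponential prefactor out of the integral and reduce the claim to a two–sided estimate for a normalized integral. Writing $e^{A^2}-e^{u^2}=e^{A^2}\bigl(1-e^{u^2-A^2}\bigr)$, one has
$$
I(A)=e^{-A^2/2}\dint_0^A\frac{du}{\sqrt{1-e^{u^2-A^2}}},
$$
so it suffices to show that the remaining integral is comparable to $A$, uniformly for $A>1$. The lower bound is immediate: for $u\in[0,A]$ we have $u^2-A^2\leq 0$, hence $1-e^{u^2-A^2}\leq 1$ and the integrand is $\geq 1$, which already gives $I(A)\geq A\,e^{-A^2/2}$.

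For the upper bound I would split the integral at $u=A-\frac1A$. On the bulk interval $[0,A-\frac1A]$ the integrand is controlled by an absolute constant: since $u\mapsto A^2-u^2$ is decreasing, one checks $A^2-u^2\geq A^2-(A-\frac1A)^2=2-\frac1{A^2}\geq 1$ for $A>1$, so $e^{u^2-A^2}\leq e^{-1}$ and the integrand is at most $(1-e^{-1})^{-1/2}$. This piece therefore contributes at most $(1-e^{-1})^{-1/2}\,A$.

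The only delicate point is the near–endpoint piece $[A-\frac1A,A]$, where the integrand has an integrable square-root singularity at $u=A$. Here I would substitute $u=A-s$, $s\in[0,\frac1A]$, and use $u^2-A^2=-s(2A-s)\leq -sA$ (valid because $2A-s\geq A$ for $0\leq s\leq\frac1A$ and $A\geq 1$). Combined with the concavity estimate $1-e^{-x}\geq(1-e^{-1})x$ on $[0,1]$, applied to $x=sA\leq 1$, this yields $1-e^{u^2-A^2}\geq(1-e^{-1})\,sA$, whence
$$
\dint_{A-\frac1A}^A\frac{du}{\sqrt{1-e^{u^2-A^2}}}\leq\dint_0^{1/A}\frac{ds}{\sqrt{(1-e^{-1})\,sA}}=\frac{2}{\sqrt{1-e^{-1}}}\,\frac1A\leq\frac{2}{\sqrt{1-e^{-1}}}\,A.
$$
Adding the two contributions gives $\int_0^A(1-e^{u^2-A^2})^{-1/2}\,du\lesssim A$, and hence $I(A)\lesssim A\,e^{-A^2/2}$, which together with the lower bound proves $I(A)\approx A\,e^{-A^2/2}$. (Alternatively, Lemma~\ref{Alamda} with $\lambda=1$ bounds exactly this endpoint integral, but the resulting factor $\frac{A}{A^2-1}$ degenerates as $A\to 1^+$, so the direct substitution above is preferable for a bound uniform on all of $A>1$.)

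I expect the main obstacle to be precisely this singular endpoint: one must verify that, despite the blow-up of the integrand at the turning point $u=A$, the singular region contributes only $O(1/A)$ and is therefore negligible against the bulk term of size $A$. The content of the estimate is that $I(A)$ is governed by the flat part of the integrand away from $u=A$, with the prefactor $e^{-A^2/2}$ carrying the Gaussian decay.
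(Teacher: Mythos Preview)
Your proof is correct and follows the same overall strategy as the paper: factor out $e^{-A^2/2}$, get the lower bound trivially, and for the upper bound split the interval near the singular endpoint $u=A$. The difference is only in how the endpoint piece is handled. The paper splits at $A-\tfrac{1}{4A}$ and invokes Lemma~\ref{Alamda} with $\lambda=\tfrac12$ (not $\lambda=1$), which gives $J(A,\tfrac12)\le \tfrac{A e^{1/2}}{A^2-1/4}\,e^{-A^2/2}$; since $A^2-\tfrac14\ge\tfrac34$ for $A>1$, this is uniformly $\lesssim A\,e^{-A^2/2}$ and the degeneration you worried about does not occur. Your direct substitution $u=A-s$ together with the concavity bound $1-e^{-x}\ge(1-e^{-1})x$ on $[0,1]$ achieves the same thing in a self-contained way with explicit constants, at the cost of not reusing the earlier lemma. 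Both routes are equally short; yours is marginally more elementary, the paper's is more modular.
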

\begin{proof}[Proof of Lemma \ref{A}]
In one hand, it is clear that
$$
I(A)\geq\,A\,{\rm e}^{-\frac{A^2}{2}}.
$$

In the other hand, write
\begin{equation}
\label{integral}
 I(A)=
\dint_0^{A-\frac{1}{4A}}\,\frac{du}{\sqrt{{\rm e}^{A^2}-{\rm e}^{u^2}}}+J(A,\frac{1}{2}).
\end{equation}

By Lemma \ref{Alamda}, we get
$$
J(A,\frac{1}{2})\leq\,\frac{A\,{\rm e}^{\frac{1}{2}}}{A^2-\frac{1}{4}}
\,{\rm e}^{-\frac{A^2}{2}}\lesssim\,A\,{\rm e}^{-\frac{A^2}{2}}.
$$

Note that for any $0\leq u\leq A-\frac{1}{4A}$, we have

$$
\frac{1}{\sqrt{{\rm e}^{A^2}-{\rm e}^{u^2}}}\leq
\frac{1}{\sqrt{{\rm e}^{A^2}-{\rm e}^{(A-1/4A)^2}}}\lesssim\,{\rm e}^{-A^2/2}.
$$
Hence, the first integral in (\ref{integral}) can be estimated by
$$
\dint_0^{A-\frac{1}{4A}}\,\frac{du}{\sqrt{{\rm e}^{A^2}-{\rm e}^{u^2}}}
\lesssim\,A \,{\rm e}^{-A^2/2},
$$
and (\ref{I(A)}) follows.
\end{proof}

\section{Energy regularity data}
This section is devoted to the well-posedness issues in the energy space. Some of these results were announced in \cite{IMM2}. We begin by Theorem \ref{energy}.
\subsection{Proof of Theorem \ref{energy}}
First, consider the case $d\geq 3$.
\begin{enumerate}
\item[1)]
\noindent$\bullet$ {\it Construction of $\varphi_k$}.\\
For $k\geq 1$ and $\varepsilon>0$ (depending on $k$ as we will see
later) define the function $\varphi_k$ by
\begin{eqnarray*}
 \varphi_k(x)&=&\; \left\{
\begin{array}{cllll}0 \quad&\mbox{if}&\quad
|x|\geq 1,\\\\
a(k,\varepsilon)\Big(|x|^{2-d}-1\Big) \quad&\mbox{if}&\quad
\dfrac{\varepsilon}{k}\leq
|x|\leq 1,\\\\
k^{\frac{d-2}{2}}\quad&\mbox{if}&\quad |x|\leq
\dfrac{\varepsilon}{k},
\end{array}
\right.
\end{eqnarray*}
where $a(k,\varepsilon)$ is chosen such that $\varphi_k$ is
continuous, namely
$$a(k,\varepsilon)=\dfrac{\varepsilon^{d-2}k^{\frac{d-2}{2}}}{k^{d-2}-\varepsilon^{d-2}}.$$
An easy computation yields
$$
 \|\nabla\varphi_k\|_{L^2}^2\lesssim\;\dfrac{\varepsilon^{d-2}k^{d-2}}{k^{d-2}-\varepsilon^{d-2}}\lesssim\varepsilon^{d-2}.
 $$
Using assumption \eqref{3+}, we get
\begin{eqnarray*}
\dint_{\R^d}\;F(\varphi_k(x))\;dx&\lesssim&F(k^{\frac{d-2}{2}})\Big(\frac{\varepsilon}{k}\Big)^d+
\dint_{\frac{\varepsilon}{k}}^1
F\Big(a(k,\varepsilon)\Big(r^{2-d}-1\Big)\Big)\;r^{d-1}\;dr\\\\
&\lesssim&F(k^{\frac{d-2}{2}})\Big(\frac{\varepsilon}{k}\Big)^d\Big(1+\dfrac{
1-(\frac{\varepsilon}{k})^d}{(1-(\frac{\varepsilon}{k})^{d-2})^{\frac{2d}{d-2}}}\Big).
\end{eqnarray*}
Since $k\;\Big(F(k^{\frac{d-2}{2}})\Big)^{-\frac{1}{d}}\rightarrow
0$ we will choose
$$
\varepsilon=\varepsilon_k{\buildrel\hbox{\footnotesize def}\over
=}k\;\Big(F(k^{\frac{d-2}{2}})\Big)^{-\frac{1}{d}}.
$$
With this choice, we can see that
$\|\nabla\varphi_k\|_{L^2}\rightarrow 0$ and
$\dsup_k\;E(\varphi_k)<\infty$.\\
\noindent$\bullet$ {\it Construction of $t_k$}.\\
Consider the ordinary differential equation associated to
\eqref{wave1}.
\begin{equation}
\label{ODE-f}
\ddot{\Phi}+F'(\Phi)=0,\quad(\Phi(0),\dot{\Phi}(0))=(k^{\frac{d-2}{2}},0).
\end{equation}
Using Lemma \ref{l-ode} and the assumptions on $F$, we can see that
(\ref{ODE-f}) has a unique global periodic solution $\Phi_k$ with
period
\begin{eqnarray*}
T_k&=&2\sqrt{2}\;\dint_0^{k^{\frac{d-2}{2}}}\;\dfrac{d\Phi}{\sqrt{F(k^{\frac{d-2}{2}})-F(\Phi)}}\\\\
&=&2\sqrt{2}\dfrac{k^{\frac{d-2}{2}}}{\sqrt{F(k^{\frac{d-2}{2}})}}\;\dint_0^1\;\Big(1-\frac{F(v
k^{\frac{d-2}{2}})}{F(k^{\frac{d-2}{2}})}\Big)^{-1/2}\;dv.
\end{eqnarray*}
By assumption \eqref{3+}, we get
\begin{eqnarray*}
T_k&\leq&2\sqrt{2}\dfrac{k^{\frac{d-2}{2}}}{\sqrt{F(k^{\frac{d-2}{2}})}}\;\dint_0^1\;\Big(1-v^{\frac{2d}{d-2}}\Big)^{-1/2}\;dv\\\\
&\lesssim& k^{\frac{d-2}{2}}\Big(F(k^{\frac{d-2}{2}})\Big)^{-1/2}.
\end{eqnarray*}
It follows that
$$
T_k\ll\frac{\varepsilon_k}{k}.
$$
Now we are in a position to construct the sequence $(t_k)$. Recall
that by finite speed of propagation, any weak solution $u_k$ of
(\ref{wave1}) with data $(\varphi_k, 0)$ satisfy
$$
u_k(t,x)=\Phi_k(t)\quad\mbox{if}\quad
0<t<\frac{\varepsilon_k}{k}\;\;\mbox{and}\;\;|x|<\frac{\varepsilon_k}{k}-t.
$$
Hence
$$
|\partial_t
u_k(t,x)|=|\dot{\Phi}_k(t)|=\sqrt{2}\sqrt{F(k^{\frac{d-2}{2}})-F(\Phi_k(t))}.
$$
Let us choose $t_k=T_k/4$, then $\Phi_k(t_k)=0$, $t_k\ll\frac{\varepsilon_k}{k}$ and, for
$|x|<\frac{\varepsilon_k}{k}-t_k$,
$$
|\partial_t
u_k(t_k,x)|=\sqrt{2}\sqrt{F(k^{\frac{d-2}{2}})-F(\Phi_k(t_k))}\gtrsim\sqrt{F(k^{\frac{d-2}{2}})}.
$$
So
$$
 \|\partial_t u_k(t_k)\|_{L^2}^2\gtrsim\;F(k^{\frac{d-2}{2}})\Big(\frac{\varepsilon_k}{k}-t_k\Big)^d=
 \Big(\frac{\varepsilon_k}{k}\Big)^d\;F(k^{\frac{d-2}{2}})\Big(1-t_k\frac{k}{\varepsilon_k}\Big)^d,
$$
and the conclusion follows.

\item[2)] Now we turn to the proof of the second claim of Theorem \ref{energy}. For the sake of clearness, we restrict ourselves to the model example \eqref{3+}.
For any real $a>0$, we denote by $\Phi_a$ the unique global solution of
\begin{equation}
\label{ode}
\ddot{\Phi}(t)+\Phi^7(t)=0,\quad\left(\Phi(0),\dot{\Phi}(0)\right)=\left(a,0\right)\,.
\end{equation}
By Lemma \ref{l-ode}, $\Phi_a$ is periodic with period $T(a)$. Observe that by a scaling argument, we have $T(a)=a^{-3}T(1)$ and therefore
\begin{equation}
\label{period}
T(a)=C\,a^{-3},
\end{equation}
for some absolute positive constant $C$.
Let $(M_k)$ be a sequence of integers tending to infinity and such that
\begin{equation}
\label{firstsequence}
M_k=o\left(k^{1/6}\right),\quad k\to\infty\,.
\end{equation}
We denote by $(\eta_k)$ the unique sequence in $(0,\infty)$ satisfying
\begin{equation}
\label{etak}
4M_k=\dfrac{1}{1-(1-\eta_k)^3}\,.
\end{equation}
As a consequence of these choices, we obtain the following crucial identity
\begin{equation}
\label{crucial}
M_k\,T(\sqrt{k})=\left(M_k-\frac{1}{4}\right)\,T(\sqrt{k}(1-\eta_k))\,.
\end{equation}
A good choice of the sequence $(t_k)$ is then
\begin{equation}
\label{sequ}
t_k=M_k\,T(\sqrt{k})\,.
\end{equation}
Taking advantage of \eqref{period} and \eqref{firstsequence}, we get $t_k\ll\,k^{-4/3}$.\\
\noindent$\bullet${\sc Construction of $\phi_k$}\\ The idea is to take a function $\phi_k$ oscillating between $\sqrt{k}$ and $\sqrt{k}(1-\eta_k)$ a certain number of times. First we choose a sequence $(N_k)$ of  even integers tending to infinity and such that
\begin{equation}
\label{jump}
N_k\,\sim\,C\, k^{1/6}\,M_k^2,
\end{equation}
and we set $\alpha_k:=10\,t_k\, N_k\, k^{4/3}\,\sim\,C\,M_k^3$. We divide the radial interval $k^{-4/3}\leq r\leq (\alpha_k+1)k^{-4/3}$ into $N_k$ sub-intervals each of them has a length $10\,t_k$ and write
$$
[k^{-4/3}, (\alpha_k+1)k^{-4/3}]=\bigcup_{j=0}^{N_k-1}\,[a_k^{(j)},a_k^{(j+1)}],
$$
where $a_k^{(j)}=k^{-4/3}+10j\,t_k$. Now consider $\phi_k$ which is continuous and oscillates between $\sqrt{k}$ and $\sqrt{k}(1-\eta_k)$ as follows:
\begin{eqnarray*}
\phi_k(r)&=&\sqrt{k},\quad r\leq k^{-4/3},\\
\phi_k(r)&=&\sqrt{k}(1-\eta_k),\quad k^{-4/3}+t_k\leq r\leq k^{-4/3}+9t_k,\\
\phi_k(r)&=&\sqrt{k},\quad  k^{-4/3}+11t_k\leq r\leq k^{-4/3}+19t_k,\\
\phi_k(r)&=&\cdots,\\
\phi_k(r)&=&\sqrt{k},\quad  k^{-4/3}+ (10N_k-1)t_k\leq r\leq k^{-4/3}+(10N_k-1) t_k,\\
\phi_k(r)&=&\sqrt{k},\quad r\geq k^{-4/3}+10N_kt_k,
\end{eqnarray*}
and $\phi_k$ is affine in the remaining intervals. An easy computation shows that
\begin{equation}
\label{data}
\|\nabla\phi_k\|_{L^2}^2\,\lesssim
\,N_k\Big(\frac{\sqrt{k}\eta_k}{t_k}\Big)^2\,(k^{-4/3})^3\,t_k\,k^{4/3}\,\lesssim\,\frac{1}{M_k}\,.
\end{equation}
Moreover, using the finite speed of propagation and the fact that
$$
\Phi_{\sqrt{k}}(t_k)=\sqrt{k},\quad\Phi_{\sqrt{k}(1-\eta_k)}(t_k)=0,
$$
we conclude that any weak solution $u_k$ to \eqref{NL3+} with data $(\phi_k,0)$ satisfies
\begin{equation}
\label{instab}
\|\partial_t\,u_k(t_k)\|_{L^2}^2\,\gtrsim\,N_k k^4 \left(k^{-4/3}\right)^4 t_k k^{4/3}\,\,\gtrsim\,M_k^3\,.
\end{equation}
This finishes the proof for $d\geq 3$.\\
The case $d=2$ can be handled in a similar way. We have just to make a suitable choice of the initial data.\\
\noindent$\bullet$ {\it Construction of $\varphi_k$}.\\
For $k\geq 1$, we define $\varphi_k$ by
\begin{eqnarray*}
 \varphi_k(x)&=&\; \left\{
\begin{array}{cllll}0 \quad&\mbox{if}&\quad
|x|\geq 1,\\\\
\dfrac{-2\sqrt{k}}{\log\Big(F(\sqrt{k})\Big)}\;\log|x|
\quad&\mbox{if}&\quad \varepsilon_k\;{\rm e}^{-k/2}\leq
|x|\leq 1,\\\\
\sqrt{k}\quad&\mbox{if}&\quad |x|\leq\varepsilon_k\;{\rm e}^{-k/2},
\end{array}
\right.
\end{eqnarray*}
where $\varepsilon_k={\rm e}^{k/2}\;\Big(F(\sqrt{k})\Big)^{-1/2}$. Remark
that, by \eqref{2Dinfty}, we have  $\varepsilon_k\longrightarrow 0$. An
easy computation (using assumption \eqref{2Dinfty}) yields to
$$
 \|\nabla\varphi_k\|_{L^2}^2\lesssim\;\frac{-1}{\log\varepsilon_k},
 $$
and
 $$
 \dint_{\R^2}\;F(\varphi_k(x))\;dx\lesssim
 \varepsilon_k^2\;{\rm e}^{-k}\;F(\sqrt{k})+\dint_{\varepsilon_k
 {\rm e}^{-k/2}}^1\;r\;{\rm e}^{4\frac{\log^2 r}{(\log
 F(\sqrt{k}))^2}}\;dr{\buildrel\hbox{\footnotesize def}\over =}(I)+(II).
 $$
The choice of $\varepsilon_k$ implies that $(I)\lesssim 1$. For the
term $(II)$, we use Lemma \ref{lem-bound}.

\noindent$\bullet$ {\it Construction of $t_k$}.\\
As in higher dimensions, we consider the associated ordinary
differential equation with data $(\sqrt{k},0)$. This equation has a
unique global periodic solution with period
$$
T_k=2\sqrt{2}\;\dint_0^{\sqrt{k}}\;\dfrac{d\Phi}{\sqrt{F(\sqrt{k})-F(\Phi)}}.
$$
By assumption \eqref{2Dinfty}, we get $$ T_k\lesssim
\sqrt{k}\;\frac{1}{A}\;\dint_{0}^A\,\frac{du}{\sqrt{{\rm e}^{A^2}-{\rm e}^{u^2}}}$$
where $A=\sqrt{\log F(\sqrt{k})}$.
It follows from Lemma \ref{A} that $T_k\ll
\varepsilon_k\;{\rm e}^{-k/2}$. Now, arguing exactly in the same manner as
in higher dimension we finish the proof for $d=2$.
\end{enumerate}

\subsection{Proof of Theorem \ref{loc-large}}\quad\\
The idea here is to split the initial data in a small part in $H^1\times L^2$ and a
smooth one. First, we solve the IVP with smooth initial
data to obtain a local and bounded solution $v$.
Then, we consider the perturbed equation satisfied by
$w:=u-v$ and with small initial data. Notice that similar idea was used in \cite{GP, Germain, KPV, Planchon}. Now we come to the details.\\
\noindent{\sc Existence.} \\Given initial data $(u_0,u_1)$ in the energy
space $H^1\times L^2$, we decompose it as follows
\begin{eqnarray*}
(u_0,u_1)&=&(u_0,u_1)_{<n}+(u_0,u_1)_{>n}\\&:=&S_n(u_0,u_1)+(I-S_n)(u_0,u_1)
\end{eqnarray*}
 where $n$ is a (large) integer to be chosen later.
Remark
that
$$
(u_0,u_1)_{>n}\rightarrow 0\quad\mbox{in}\quad H^1\times
L^2\quad\mbox{as}\quad n\rightarrow\infty,
$$
and, for every $n$, $(u_0,u_1)_{<n}\in H^2\times H^1.$
First we consider the IVP with regular data
\begin{equation}
\label{regular-eq}\Box v +v+f(v)=0,\quad (v(0,x),\partial_t
v(0,x))=(u_0,u_1)_{<n},\quad f(v)=v\left({\rm e}^{4\pi v^2}-1\right)\,. \end{equation}
 It is known that (\ref{regular-eq}) is well-posed. More precisely, there exist a time $T_n=T(\|(u_0,u_1)_{<n}\|_{H^2\times H^1})>0$ and a unique solution $v$ to
 (\ref{regular-eq}) in $C_{T_n}(H^2)\cap C^1_{T_n}(H^1)$. Moreover, we can choose
 $T_n$ such that $\|v\|_{L^\infty_{T_n}(H^2)}\leq (\|(u_0)_{<n}\|_{H^2}+1)$.

Next we consider the perturbed IVP with small data
\begin{equation}
\label{pert-eq}\Box w+w+f(w+v)-f(v)=0,\quad (w(0,x),\partial_t
w(0,x))=(u_0,u_1)_{>n}.
\end{equation}
 We shall prove that (\ref{pert-eq}) has a local in time solution in
the space ${\mathcal E}_T:=C_T(H^1)\cap C^1_T(L^2)\cap
L^4_T(C^{\frac{1}{4}})$ for suitable time $T>0$. This will be
achieved by a standard fixed point argument. We denote by $w_\ell$ the
solution of the linear Klein-Gordon equation with data $(u_0,u_1)_{>n}$,
$$
\Box w_\ell+w_\ell=0,\quad (w_\ell(0,x),\partial_t w_\ell(0,x))=(u_0,u_1)_{>n}.
$$
For a positive time $T\leq T_n$ and a positive real number $\delta$, we denote by
${\mathcal E}_T(\delta)$ the closed ball in ${\mathcal E}_T$ of
radius $\delta$ and centered at the origin. On the ball ${\mathcal
E}_T(\delta)$, we define the map $\Phi$ by$$ \Phi: w\in {\mathcal
E}_T(\delta)\longmapsto \tilde{w}
$$
where
$$
\Box\tilde{w}+\tilde{w}+f(w+w_\ell+v)-f(v)=0,\qquad
\left(\tilde{w}(0,x),\partial_t\tilde{w}(0,x)\right)=\left(0,0\right).
$$
By energy and Strichartz estimates, we get
\begin{eqnarray*}
\|\Phi(w)\|_{{\mathcal
E}_T}&\lesssim&\|f(w+w_\ell+v)-f(v)\|_{L^1_T(L^2)}\\\\
&\lesssim&\|w+w_\ell\|_{L^\infty_T(L^2)}\;\Big\|{\rm e}^{C\|w+w_\ell+v\|_{\infty}^2}+{\rm e}^{C\|v\|_{\infty}^2}\Big\|_{L^1_T}
\end{eqnarray*}
It is clear that $$\Big\|{\rm e}^{C\|v\|_{\infty}^2}\Big\|_{L^1_T}\lesssim
T {\rm e}^{C(\|(u_0)_{<n}\|_{H^2}+1)^2}$$ On the other hand, using the logarithmic
inequality we infer
$$
{\rm e}^{C\|w+w_\ell+v\|_{\infty}^2}\lesssim
{\rm e}^{C\|(u_0)_{<n}\|_{H^2}^2}\Big(C+\frac{\|w+w_\ell\|_{C^{1/4}}}{\delta+\varepsilon}\Big)^{C(\delta+\varepsilon)^2},
$$
where $\varepsilon^2=\|w_0\|_{H^1}^2+\|w_1\|_{L^2}^2$. By H\"older
inequality in time we deduce
$$
\Big\|{\rm e}^{C\|w+w_\ell+v\|_{\infty}^2}\Big\|_{L^1_T}\lesssim
{\rm e}^{C\|(u_0)_{<n}\|_{H^2}^2}\;T^{1-\frac{\beta}{4}}\;\left(T^{1/4}+\delta+\varepsilon\right)^{\beta},
$$
where $\beta:=C(\delta+\varepsilon)^2<4$ for $\delta$ and
$\varepsilon$ small enough. Finally, we get
$$
\|\Phi(w)\|_{{\mathcal
E}_T}\lesssim
(\delta+\varepsilon)\;{\rm e}^{C\|(u_0)_{<n}\|_{H^2}^2}\;\Big(T+T^{1-\frac{\beta}{4}}\;\left(T^{1/4}+\delta+\varepsilon\right)^{\beta}\Big).
$$
>From this inequality it follows immediately that, if $T$ is small
enough then $\Phi$  maps ${\mathcal E}_T(\delta) $ into itself. To
prove that $\Phi$ is a contraction (at least for $T$ small), we
consider two elements $w_1$ and  $w_2$ in ${\mathcal E}_T(\delta)$
and define
$$ w=w_1-w_2,\quad\tilde{w}=\tilde{w_1}-\tilde{w_2},\quad
\bar{w}=(1-\theta)(w_\ell+w_1)+\theta(w_\ell+w_2)+v\quad\mbox{with}\quad
0\leq\theta\leq 1.
$$
We can write $$
f(w_\ell+w_1)-f(w_\ell+w_2)=w\big[(1+8\pi\bar{w}^2)
{\rm e}^{4\pi\bar{w}^2}-1\big]
$$ for some choice of $0\leq\theta(t,x)\leq 1$.
 By the energy estimate and the
Strichartz inequality we have
$$\Big\|\Phi(w_1)-\Phi(w_2)\Big\|_{{\mathcal
E}_T}\lesssim
\Big\|w {\rm e}^{C|\bar{w}|^2}\Big\|_{L^1_T(L^2_x)}.
$$
By convexity, we obtain
$$
\Big\|\Phi(w_1)-\Phi(w_2)\Big\|_{{\mathcal
E}_T}\lesssim\Big\|w\left(
{\rm e}^{C|w_\ell+w_1|^2}+{\rm e}^{C|w_\ell+w_2|^2}\right)\Big\|_{L^1_T(L^2_x)}.
$$
So arguing as before, we get
\begin{eqnarray*}
\Big\|\Phi(w_1)-\Phi(w_2)\Big\|_{{\mathcal
E}_T}&\lesssim&\|w\|_{L^{\infty}_T(L^2)}\;\left(\Big\|{\rm e}^{C\|w_\ell+w_1\|_{\infty}^2}\Big\|_{L^1_T}
+\Big\|{\rm e}^{C\|w_\ell+w_2\|_{\infty}^2}\Big\|_{L^1_T}\right),\\
&\lesssim&T^{1-\frac{\beta}{4}}\;\left(T^{1/4}+\delta+\varepsilon\right)^{\beta}\Big\|w_1-w_2\Big\|_T,
\end{eqnarray*}
for some $\beta<4$. If the parameters $\varepsilon>0$, $\delta>0$
and $T>0$ are suitably chosen, then $\Phi$ is a contraction map on ${\mathcal E}_T(\delta)$ and thus a local in time solution is constructed.\\
{\sc Uniqueness.}\\
We shall prove the uniqueness in the space
$$
{\mathcal F}_\eta:=C_{T}(H^2)\cap C^1_{T}(H^1) + \Big\{ \,w\in{\mathcal
E}_T\quad;\quad\|w\|_T\leq \eta\,\Big\},
$$
for any $\eta<\frac{1}{\sqrt{2}}$. Let $u:=v+w$ and $U:=V+W$ be two solutions of \eqref{nlkg} in ${\mathcal F}_\eta$ with the same initial data. Since $v, V\in C_t(H^2)$ and $H^2$ is embedded in $L^\infty$, we can choose a time $T>0$ such that (for some constant $C$)
\begin{equation}
\label{uniq}
\|v\|_{L^\infty([0,T], L^\infty)}\,\leq\, C\quad\mbox{and}\quad \|V\|_{L^\infty([0,T], L^\infty)}\,\leq\, C\,.
\end{equation}
The difference $U-u$ satisfies
$$
\Box(U-u)+U-u=f(v+w)-f(V+W),\quad \Big( (U-u),\partial_t(U-u)\Big)(t=0)=\left(0,0\right)\,.
$$

Using the energy estimate and Strichartz inequality, we get
\begin{eqnarray*}
\|U-u\|_{{\mathcal E}_T}&\lesssim&\|f(v+w)-f(V+W)\|_{L^1_T(L^2)}\\
&\lesssim&\|(U-u)\Big( U^2({\rm e}^{4\pi U^2}-1)+u^2({\rm e}^{4\pi u^2}-1)\Big)\|_{L^1_T(L^2)}\\
&\lesssim&\|U-u\|_{L^\infty_T(L^{\frac{2}{\varepsilon}})}\;\|U^2({\rm e}^{4\pi U^2}-1)+u^2({\rm e}^{4\pi u^2}-1)\|_{L^1_T(L^{\frac{2}{1-\varepsilon}})},
\end{eqnarray*}
where $\varepsilon>0$ to be chosen small enough. To conclude the proof of the uniqueness, we have to estimate the term $\|u^2({\rm e}^{4\pi u^2}-1)\|_{L^1_T(L^{\frac{2}{1-\varepsilon}})}$ for example. Observe that, for any $\beta>0$ and $a>1$,
\begin{equation}
\label{simple1}
x^2({\rm e}^{4\pi x^2}-1)\leq C_\beta\;\left({\rm e}^{4\pi(1+\beta) x^2}-1\right),
\end{equation}
and
\begin{equation}
\label{simple2}
(x+y)^2\leq \frac{a}{a-1}\,x^2+a\,y^2\,.
\end{equation}
Hence
$$
\|u^2({\rm e}^{4\pi u^2}-1)\|_{L^1_T(L^{\frac{2}{1-\varepsilon}})}\,\lesssim\, \dint_0^T\;\Big(\dint_{\R^2}\;\left({\rm e}^{8\pi\frac{1+\beta}{1-\varepsilon}u^2}-1\right)\,dx\Big)^{\frac{1-\varepsilon}{2}}\,\;dt\,.
$$
Moreover, using \eqref{simple2}, we can write
\begin{equation}
\label{simple3}
{\rm e}^{8\pi\frac{1+\beta}{1-\varepsilon}u^2}-1\leq \Big({\rm e}^{8\pi\frac{1+\beta}{1-\varepsilon}\frac{a}{a-1}v^2}-1\Big) +\Big({\rm e}^{8\pi\frac{1+\beta}{1-\varepsilon} a w^2}-1\Big)+\Big({\rm e}^{8\pi\frac{1+\beta}{1-\varepsilon}\frac{a}{a-1}v^2}-1\Big)\Big({\rm e}^{8\pi\frac{1+\beta}{1-\varepsilon} a w^2}-1\Big)\,.
\end{equation}
To estimate the first term in the RHS of \eqref{simple3}, we use \eqref{uniq}. For the second term, we observe that
$$
\sqrt{2}\,\eta\,\sqrt{\frac{(1+\beta)a}{1-\varepsilon}}\to \eta\sqrt{2}<1\quad\mbox{as}\quad a\to 1,\;\varepsilon,\,\beta\to 0\,.
$$
This enables us to use Moser-Trudinger inequality. We do the same for the last term. This concludes the proof of the uniqueness in the space ${\mathcal F}_\eta$. Note that we can weaken the hypothesis $\eta<\frac{1}{\sqrt{2}}$ to $\eta<1$ if we use the sharp logarithmic inequality \eqref{H-mu}.
\endproof

\begin{rem}
In higher dimension $d\geq 3$, we have a similar result in
$H^{d/2}\times H^{d/2-1}$ for (\ref{wave1}) by using a decomposition
in $H^{d/2+1}\times H^{d/2}$ and small in $H^{d/2}\times H^{d/2-1}$.
\end{rem}



\subsection{Proof of Theorem \ref{2Dsupercritical} }\quad\\
For any $k\geq 1$ define $f_k$ by:
\begin{eqnarray*}
 f_k(x)&=&\; \left\{
\begin{array}{cllll}0 \quad&\mbox{if}&\quad
|x|\geq 1,\\\\
-\dfrac{\log|x|}{\sqrt{k\pi}} \quad&\mbox{if}&\quad {\rm e}^{-k/2}\leq
|x|\leq 1
,\\\\
\sqrt{\frac{k}{4\pi}}\quad&\mbox{if}&\quad |x|\leq {\rm e}^{-k/2}.
\end{array}
\right.
\end{eqnarray*}
These functions were introduced in \cite{M} to show the optimality
of the exponent $4\pi$ in Moser-Trudinger inequality. An easy
computation shows that $\|\nabla
f_k\|_{L^2(\R^2)}=1$ and $\|f_k\|_{L^2(\R^2)}\lesssim\frac{1}{\sqrt{k}}.$
Denote by $u_k$ and $v_k$ any weak solutions
of (\ref{nlkg}) with initial data
$\Big((1+\frac{1}{k})f_k(\frac{\cdot}{\nu}), 0\Big)$ and $\Big(
f_k(\frac{\cdot}{\nu}), 0\Big)$ respectively. Observe that by
construction,
$$
 \|(u_k-v_k)(0)\|_{H^1}^2+
 \|\partial_t(u_k-v_k)0)\|_{L^2}^2=
 \frac{1}{k^2}\|f_k(\frac{\cdot}{\nu})\|_{H^1}^2=
\circ(1)\quad\mbox{as}\quad k\rightarrow\infty.
$$
Also, using estimate (\ref{Ia}), it is clear that
$$
0< E\Big((1+\frac{1}{k})f_k(\frac{\cdot}{\nu})\Big)-1\leq
{\rm e}^3\nu^2\quad\hbox{  and  }\quad 0<
E\Big(f_k(\frac{\cdot}{\nu})\Big)-1\leq \nu^2.
$$
Now, we shall construct the sequence of time $t_k$.
Observe that a good approximation of $u_k$ and $v_k$ is
provided by the corresponding ordinary differential equation,
\begin{eqnarray}
\label{ode expo}\ddot{\Phi}+\Phi {\rm e}^{4\pi\Phi^2}=0.
\end{eqnarray}
More precisely, let $\Phi_k$ and $\Psi_k$ be the solutions of
(\ref{ode expo}) with initial data

$$
 \Phi_k(0)=(1+\frac{1}{k})\sqrt{\frac{k}{4\pi}},
\quad\dot{\Phi}_k(0)=0,
$$
and

$$
 \Psi_k(0)=\sqrt{\frac{k}{4\pi}},
\quad\dot{\Psi}_k(0)=0,
$$
respectively.
Note that by finite speed of propagation, we have $\Phi_k=u_k$
and $\Psi_k=v_k$ in the backward light cone $$|x|<\nu
{\rm e}^{-k/2}-t,\quad t<\nu {\rm e}^{-k/2}.$$
 On the other hand, recall that
the period $T_k$ of $\Phi_k$ is given by
$$
T_k=4\int_0^{(1+\frac{1}{k})\sqrt{k}}
\frac{du}{\sqrt{{\rm e}^{(1+\frac{1}{k})^2 k}-{\rm e}^{u^2}}},
$$
hence, using Lemma \ref{A} we can prove that  $T_k \thickapprox
\sqrt{k}\, {\rm e}^{-(1+\frac{1}{k})^2k/2}$. Therefore, one need to
 choose time $t_k<<{\rm e}^{-(1+\frac{1}{k})^2k/2}$ and
 check that the decoherence of $\Phi_k$ and $\Psi_k$
 occurs at time $t_k$. Choose $t_k\in]0,T_k/4[$ such that
$$
\Phi_k(t_k)=(1+1/k)\sqrt{\frac{k}{4\pi}}-
\left((1+1/k)\sqrt{\frac{k}{4\pi}}\right)^{-1}.
$$
It follows that
$$
t_k=\dint_{\sqrt{k}+1/\sqrt{k}-
\frac{4\pi\sqrt{k}}{k+1}}^{\sqrt{k}+1/\sqrt{k}}\,\,
\frac{du}{\sqrt{{\rm e}^{k(1+1/k)^2}-{\rm e}^{u^2}}}.
$$
Using (\ref{A-lamda}), we obtain
$t_k\lesssim\frac{1}{\sqrt{k}}\,{\rm e}^{-k/2}$. In particular, if $k$ is
large enough then $t_k\lesssim\frac{\nu}{2}{\rm e}^{-k/2}.$
Now we show that this time $t_k$ is sufficient to let instability
occurs. Since $\Psi_k$ is decreasing on the interval $[0,\frac{T_k}{4}]$, we have
$$
{\rm e}^{4\pi\psi_k(0)^2}-{\rm e}^{4\pi\psi_k(t_k)^2}=|{\rm e}^k-{\rm e}^{4\pi\psi_k(t_k)^2}|\lesssim
{\rm e}^k,
$$
Therefore,
$$
|(\dot{\Phi}_k(t_k))^2-(\dot{\Psi}_k(t_k))^2|=\frac{1}{4\pi}\Big|\left({\rm e}^{4\pi\Phi_k(0)^2}-{\rm e}^{4\pi\Phi_k(t_k)^2}\right)-
\left({\rm e}^{4\pi\Psi_k(0)^2}-{\rm e}^{4\pi\Psi_k(t_k)^2}\right)\Big|\gtrsim{\rm e}^k.
$$

Finally, we deduce that
\begin{eqnarray*}
\dint_{\R^2}|\partial_t(u_k-v_k)(t_k)|^2\,dx&\gtrsim&
\dint_{|x|<\frac{\nu}{2}{\rm e}^{-k/2}}|\partial_t(u_k-v_k)(t_k)|^2\,dx\\\\
&\gtrsim&\nu^2 {\rm e}^{-k}|\dot{\Phi}_k(t_k))-\dot{\Psi}_k(t_k)|^2
\end{eqnarray*}
and the conclusion follows.\endproof


\section{Low regularity data}

\subsection{Proof of Theorem \ref{IP-2D}}
\begin{enumerate}
\item[1)] For $k\geq 1$ and $\gamma>1$, let $\phi_k=\gamma\,f_k$. An easy
computation shows that
$$
\|\nabla \phi_k\|_{L^{2,\infty}}\lesssim \frac{\gamma}{\sqrt{k}}.
$$
Next we consider the solution $\Phi_k$ of the associated O.D.E with
Cauchy data $\Big(\gamma\sqrt{\frac{k}{4\pi}},0\Big)$. The period
$T_k$ of $\Phi_k$ satisfies
$$
T_k\approx\gamma\sqrt{k}\,{\rm e}^{-\frac{\gamma^2}{2}k}\ll\,{\rm e}^{-\frac{k}{2}}.
$$
Arguing as in the previous section, we construct a sequence $(t_k)$
going to zero such that any weak solution $u_k$ with Cauchy data
$(\phi_k,0)$ satisfies
$$
\|\partial_t u_k(t_k)\|_{L^{2,\infty}}^2\gtrsim {\rm e}^{(\gamma^2-1)k},
$$
and we are done.
\item[2)] Now we will prove the ill-posedness in ${\mathcal B}^1_{2,\infty}$. The main difficulty is the construction of the initial data. For this end, consider a radial smooth function $h\in C_0^\infty(\R^2)$ satisfying $h(r)=0$ if $r\geq 2$ and $h(r)=1$ if $r<1$. For $a>0$, set $h_a(r)=h(\frac{r}{a})$. Since $\widehat{h_a}(\xi)=a^2\widehat{h}(a\xi)$, we get
\begin{equation}
\label{fourier1}
|\widehat{h_a}(\xi)|\leq\frac{C}{|\xi|^2}\quad\mbox{uniformly in}\quad a.
\end{equation}
Now we define the function $g_a$ via
$$
g_a(r)=\frac{1-h_a(r)}{r}.
$$
Our aim is to prove the following
\begin{prop}
\label{fourier2}
We have
$$
|\widehat{g_a}(\xi)|\leq\frac{C}{|\xi|}\quad\mbox{uniformly in}\quad a.
$$
\end{prop}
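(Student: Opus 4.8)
The plan is to remove the parameter $a$ by scaling, and then to separate the slow $1/|x|$ behaviour of $g_a$ at infinity --- which is exactly what forces the borderline $1/|\xi|$ decay of the transform --- from a compactly supported remainder carrying only an integrable singularity at the origin.

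First I would reduce to $a=1$. Since $g_a(x)=\frac1a\,g_1(x/a)$, one has $\widehat{g_a}(\xi)=a\,\widehat{g_1}(a\xi)$, so the claimed bound $|\widehat{g_a}(\xi)|\le C/|\xi|$ (uniformly in $a$) is equivalent to the single estimate $|\widehat{g_1}(\eta)|\le C/|\eta|$; indeed $a\,|\widehat{g_1}(a\xi)|\le a\cdot C/(a|\xi|)=C/|\xi|$. This is the analogue of the scaling identity $\widehat{h_a}(\xi)=a^2\widehat{h}(a\xi)$ already used for $h_a$, and it absorbs the uniformity in $a$ at no cost.

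Next I would split $g_1=\frac{1}{|x|}-w_1$ with $w_1(x):=\frac{h(|x|)}{|x|}$. The first piece is a homogeneous distribution of degree $-1$ in $\R^2$, with $\widehat{1/|x|}=c\,|\xi|^{-1}$, which already saturates the desired bound. The function $w_1$ is supported in $\{|x|\le 2\}$, equals $1/|x|$ for $|x|\le1$, and lies in $L^1(\R^2)$; hence $\widehat{w_1}$ is bounded, and this settles the range $|\xi|\le 1$, where $C/|\xi|\gtrsim 1$. It remains to prove $|\widehat{w_1}(\xi)|\le C/|\xi|$ for $|\xi|$ large. Writing $w_1$ as the product $h(|x|)\cdot|x|^{-1}$ and passing to Fourier, I would use $\widehat{w_1}=c\,\widehat h*(|\cdot|^{-1})$ up to a harmless constant, where $\widehat h$ (the transform of the radial $C_0^\infty$ profile) is Schwartz. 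Splitting the convolution into the regions $|\zeta|\le|\xi|/2$ and $|\zeta|\ge|\xi|/2$ and using $|\widehat h(\eta)|\le C_N(1+|\eta|)^{-N}$ with $N=2$ gives, respectively, a contribution $\lesssim|\xi|^{-2}\int_{|\zeta|\le|\xi|/2}|\zeta|^{-1}d\zeta\lesssim|\xi|^{-1}$ and a contribution $\lesssim|\xi|^{-1}\|\widehat h\|_{L^1}$; together they yield the bound. (The remainder could equally be treated via the radial Bessel representation, using the uniform boundedness of $\int_0^T J_0$.)

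The main obstacle is precisely that $g_a$ decays no faster than $1/|x|$, so its transform cannot decay faster than $1/|\xi|$ and a naive single integration by parts is doomed: it would require $\nabla g_1\in L^1(\R^2)$, which fails since $|\nabla g_1|\sim|x|^{-2}$ is only logarithmically non-integrable at infinity. The decomposition above sidesteps this by assigning the entire non-integrable tail to the explicit homogeneous transform $c/|\xi|$, leaving a genuinely integrable, compactly supported remainder whose $1/|x|$ singularity at the origin produces exactly the matching $1/|\xi|$ rate.
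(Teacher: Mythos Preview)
Your proof is correct and follows essentially the same route as the paper: both decompose $g_a=\frac{1}{|x|}-\frac{h_a}{|x|}$, use $\widehat{|x|^{-1}}=c\,|\xi|^{-1}$ for the first piece, and estimate the remaining convolution $\widehat{h}*|\cdot|^{-1}$ by splitting the integration domain. Your preliminary scaling reduction $g_a(x)=a^{-1}g_1(x/a)$, $\widehat{g_a}(\xi)=a\,\widehat{g_1}(a\xi)$ is a tidy addition the paper omits --- it makes the uniformity in $a$ automatic instead of tracking the uniform bounds $|\widehat{h_a}(\xi)|\le C|\xi|^{-2}$ and $\|\widehat{h_a}\|_{L^1}\le C$ throughout the convolution estimate --- but the core argument is the same.
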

\begin{proof}
Write
$$
\widehat{g_a}(\xi)=\frac{C}{|\xi|}-C\left(\frac{1}{|\xi|}\star\widehat{h_a}(\xi)\right),
$$
where we have used the fact that $\widehat{r^{-1}}=C|\xi|^{-1}$. Observe that the convolution here is well defined.
Thus, we have to prove that, for fixed $\xi$,
$$
\Big|\int\,\frac{\widehat{h_a}(\eta)}{|\xi-\eta|}\,d\eta\Big|\lesssim \frac{1}{|\xi|}\quad\mbox{uniformly in}\quad a.
$$
The idea now is the following: fix $\xi$ such that $|\xi|\sim 2^j$ for some $j\in \Z$ and write
$$
\int\,\frac{\widehat{h_a}(\eta)}{|\xi-\eta|}\,d\eta=\int_{|\eta|\leq c2^j}\,\frac{\widehat{h_a}(\eta)}{|\xi-\eta|}\,d\eta+\int_{|\eta|\sim 2^j}\,\frac{\widehat{h_a}(\eta)}{|\xi-\eta|}\,d\eta+\int_{|\eta|\geq C2^j}\,\frac{\widehat{h_a}(\eta)}{|\xi-\eta|}\,d\eta.
$$
Using \eqref{fourier1}, we can easily estimate the second and the third term in the RHS. To estimate the first term, we use the fact that $\widehat{h_a}$ is uniformly in $L^1$.
\end{proof}
An immediate consequence is
\begin{cor}
\label{unifBesov}
We have
\begin{equation}
\label{unifBes}
\sup_{a>0}\,\|g_a\|_{\dot{\mathcal B}^0_{2,\infty}}<\infty\,.
\end{equation}
\end{cor}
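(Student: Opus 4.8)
The plan is to deduce the corollary directly from the pointwise Fourier bound of Proposition \ref{fourier2} via Plancherel's theorem and the dyadic frequency localization. Recall that by Definition \ref{homo-bes} the homogeneous Besov norm at index zero is the supremum of the $L^2$ norms of the Littlewood--Paley pieces, namely $\|g_a\|_{\dot{\mathcal B}^0_{2,\infty}}=\dsup_{j\in\Z}\|\dot{\triangle}_j\,g_a\|_{L^2}$. Since $\widehat{\dot{\triangle}_j\,g_a}(\xi)=\psi(2^{-j}\xi)\,\widehat{g_a}(\xi)$, Plancherel's theorem gives
$$
\|\dot{\triangle}_j\,g_a\|_{L^2}^2=\dint_{\R^2}\,|\psi(2^{-j}\xi)|^2\,|\widehat{g_a}(\xi)|^2\,d\xi.
$$
Thus it suffices to bound this quantity by an absolute constant, uniformly in both $j\in\Z$ and $a>0$.

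The key observation is that the multiplier $\psi(2^{-j}\cdot)$ is supported in the annulus $\{\,|\xi|\sim 2^j\,\}$ (concretely, $2^{j-1}\leq|\xi|\leq 2^{j+1}$). On this annulus Proposition \ref{fourier2} yields $|\widehat{g_a}(\xi)|^2\leq C^2/|\xi|^2\lesssim 2^{-2j}$, with a constant independent of $a$. Moreover, in two space dimensions the Lebesgue measure of this annulus is $\lesssim 2^{2j}$. Combining these two facts with the boundedness of $\psi$, I obtain
$$
\|\dot{\triangle}_j\,g_a\|_{L^2}^2\lesssim 2^{-2j}\cdot 2^{2j}=C,
$$
uniformly in $j$ and $a$. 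Taking the supremum over $j\in\Z$ and then over $a>0$ produces the uniform bound \eqref{unifBes}.

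There is no genuine obstacle remaining once Proposition \ref{fourier2} is available, since that proposition carries all the analytic difficulty. The only delicate point is the bookkeeping of powers of $2^j$: the $|\xi|^{-2}$ decay inherited from the pointwise Fourier bound contributes $2^{-2j}$, while the area of the dyadic annulus in $\R^2$ contributes $2^{2j}$, and these cancel \emph{exactly}. This exact cancellation is precisely why the index $s=0$ is the critical (and not merely a sufficient) Besov regularity for the family $(g_a)$, reflecting the scaling of the $1/r$ tail. The uniformity in $a$ requires no additional argument, as it is inherited verbatim from the uniformity already established in Proposition \ref{fourier2}.
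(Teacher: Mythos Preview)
Your proof is correct and follows essentially the same route as the paper: both arguments apply Plancherel, restrict to the dyadic annulus $\{|\xi|\sim 2^j\}$, and use the uniform bound $|\widehat{g_a}(\xi)|\lesssim|\xi|^{-1}$ from Proposition~\ref{fourier2} to conclude that the $L^2$ mass on each annulus is $O(1)$ uniformly in $j$ and $a$. The only cosmetic difference is that the paper writes the annular integral in polar coordinates as $\int_{2^{j-1}}^{2^{j+1}}\frac{dr}{r}$, whereas you bound it by (sup of integrand)$\times$(area); the content is identical.
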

\begin{proof}
Write
\begin{eqnarray*}
\|g_a\|_{\dot{\mathcal B}^0_{2,\infty}}&\approx&\sup_{j\in\Z}\,\dint_{2^{j-1}<|\xi|<2^{j+1}}\,|\widehat{g_a}(\xi)|^2\,d\xi,\\\\
&\lesssim&\;\sup_{j\in\Z}\,\dint_{2^{j-1}}^{2^{j+1}}\,\frac{dr}{r}\;\lesssim\; 1\quad\mbox{uniformly in}\quad a.
\end{eqnarray*}
\end{proof}
Now we are ready to construct the sequence of initial data $(\varphi_k)$. Let $\theta\in C_0^\infty(\R^2)$ be a radial function such that $\theta(r)=1$ if $r\leq 1$ and $\theta(r)=0$ if $r\geq 2$. For $k\geq 1$, set
\begin{equation}
 \label{data-besov}
\tilde{g}_k(r)=\frac{1}{\sqrt{k}}\;g_{{\rm e}^{-k/2}}\;(r)\;\theta(r)\,.
\end{equation}
It follows from Corollary \ref{unifBesov} that
$$
\|\tilde{g}_k\|_{\dot{\mathcal B}^0_{2,\infty}}\;\lesssim\;\frac{1}{\sqrt{k}}\;.
$$
Moreover, one can see easily that
$$
\frac{1}{C}\sqrt{k}\,\leq\, \dint_0^2\,\tilde{g}_k(r)\,dr\,C\sqrt{k}\,.
$$
To finish the construction set
$$
\varphi_k(r)=\gamma\,\sqrt{\frac{k}{4\pi}}-c_k\,\dint_0^r\,\tilde{g}_k(\tau)\,d\tau,
$$
where $\gamma>1$ and $c_k$ is chosen such that $\phi_k(2)=0$. The following proposition summarize some crucial properties of $\varphi_k$.
\begin{prop}
 \label{lim-Besov}
We have
\begin{itemize}
 \item[a)] $\varphi_k(r)=\gamma\,\sqrt{\frac{k}{4\pi}}$\quad if \quad $r\leq {\rm e}^{-k/2}$.
\item[b)] $\varphi_k\;\to\;0$\quad in\quad ${\mathcal B}^1_{2,\infty}(\R^2)$.
\end{itemize}
\end{prop}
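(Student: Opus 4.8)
The plan is to treat the two assertions separately: part a) is immediate from the support of the cut-offs, while part b) rests on the equivalent Besov norm \eqref{normBes}, the pointwise multiplier property of $x/r$ recorded after Theorem \ref{Bourdaud}, and Corollary \ref{unifBesov}.

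For part a), I would simply trace the support of $\tilde g_k$ near the origin. Since $h_{{\rm e}^{-k/2}}(r)=h(r\,{\rm e}^{k/2})$ and $h\equiv 1$ on $[0,1)$, we have $h_{{\rm e}^{-k/2}}(r)=1$, hence $g_{{\rm e}^{-k/2}}(r)=\frac{1-h_{{\rm e}^{-k/2}}(r)}{r}=0$, for every $r<{\rm e}^{-k/2}$. Consequently $\tilde g_k$ vanishes on $[0,{\rm e}^{-k/2})$, so $\dint_0^r\tilde g_k(\tau)\,d\tau=0$ and $\varphi_k(r)=\gamma\sqrt{k/4\pi}$ on that interval, which is exactly a) (the single endpoint $r={\rm e}^{-k/2}$ is irrelevant for the integral).

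For part b), the starting point is \eqref{normBes} with $s=1$, $p=2$, $q=\infty$, namely
$$
\|\varphi_k\|_{{\mathcal B}^1_{2,\infty}}\;\approx\;\|\varphi_k\|_{L^2}\;+\;\|\nabla\varphi_k\|_{\dot{\mathcal B}^0_{2,\infty}},
$$
so it suffices to show that both terms tend to zero. Since $\varphi_k$ is radial, $\nabla\varphi_k=-c_k\,\tilde g_k(r)\,\frac{x}{r}$. Recalling that $x/r$ operates on $\dot{\mathcal B}^0_{2,\infty}(\R^2)$ by pointwise multiplication, and invoking the bound $\|\tilde g_k\|_{\dot{\mathcal B}^0_{2,\infty}}\lesssim k^{-1/2}$ furnished by Corollary \ref{unifBesov}, I would obtain
$$
\|\nabla\varphi_k\|_{\dot{\mathcal B}^0_{2,\infty}}\;\lesssim\;c_k\,\|\tilde g_k\|_{\dot{\mathcal B}^0_{2,\infty}}\;\lesssim\;\frac{c_k}{\sqrt k}.
$$
Because $\dint_0^2\tilde g_k\approx\sqrt k$ and $c_k$ is fixed by $\varphi_k(2)=0$, i.e. $c_k=\gamma\sqrt{k/4\pi}\big/\dint_0^2\tilde g_k$, the constant $c_k$ stays bounded (in fact $c_k\to\gamma/\sqrt\pi$), and the gradient term is $O(k^{-1/2})\to 0$.

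It remains to control $\|\varphi_k\|_{L^2}$, and this is the part I expect to require the most care, since one must identify the whole profile of $\varphi_k$ rather than only its gradient. Note first that $\varphi_k$ is supported in $\{|x|\le 2\}$, because $\theta$, hence $\tilde g_k$, vanishes for $r\ge 2$ while $\dint_0^2\tilde g_k=\gamma\sqrt{k/4\pi}/c_k$. On the exponentially small disk $\{|x|\le {\rm e}^{-k/2}\}$ the function equals the constant $\gamma\sqrt{k/4\pi}$, contributing only $\frac{\gamma^2 k}{4}{\rm e}^{-k}\to 0$ to $\|\varphi_k\|_{L^2}^2$. On the annulus ${\rm e}^{-k/2}\le|x|\le 2$ a direct integration of $\tilde g_k=\frac{1}{\sqrt k}\frac{1-h_{{\rm e}^{-k/2}}}{r}\theta$ gives $\varphi_k(r)=-\frac{\gamma}{\sqrt{\pi k}}\log r+O(k^{-1/2})$, so that $\varphi_k$ essentially coincides with $\gamma f_k$; since $\|f_k\|_{L^2}\lesssim k^{-1/2}$ was already established, integrating $(\log r)^2\,r$ then yields $\|\varphi_k\|_{L^2}^2\lesssim k^{-1}\to 0$. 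Combining this with the gradient estimate proves b). The only genuine subtlety is thus bookkeeping the transition zone and the $\theta$-cutoff so as to confirm that $c_k\approx 1$ and that the large value $\sqrt k$ is trapped in a disk of vanishing area; once that is in place, the homogeneous-Besov bound on the gradient is immediate.
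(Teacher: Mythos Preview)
Your proof is correct and follows essentially the same route as the paper: part a) from the support of $\tilde g_k$, part b) via the equivalent norm \eqref{normBes}, the multiplier property of $x/r$ from Theorem \ref{Bourdaud}, and the bound $\|\tilde g_k\|_{\dot{\mathcal B}^0_{2,\infty}}\lesssim k^{-1/2}$ from Corollary \ref{unifBesov}. You are in fact more careful than the paper in tracking the constant $c_k$ (which the paper silently absorbs) and in spelling out why $\|\varphi_k\|_{L^2}\lesssim k^{-1/2}$, which the paper simply asserts.
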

\begin{proof}[Proof of Proposition \ref{lim-Besov}]
The first property follows directly from the definition of the function $\tilde{g}_k$. To prove the second, recall that
$$
\|\varphi_k\|_{{\mathcal
B}^1_{2,\infty}}\approx\|\varphi_k\|_{L^2}+\|\nabla\varphi_k\|_{\dot{\mathcal
B}^0_{2,\infty}}\,.
$$
Since $\|\varphi_k\|_{L^2}\;\lesssim\;\frac{1}{\sqrt{k}}$ we have just to prove that $\|\nabla\varphi_k\|_{\dot{\mathcal
B}^0_{2,\infty}}$ goes to zero. As $\nabla\varphi_k=\frac{x}{r}\;\tilde{g}_k(r)$, it suffices to apply Theorem \ref{Bourdaud} together with the fact that $\frac{x}{r}\in \dot{\mathcal B}^1_{2,\infty}\cap L^\infty$. This complete the proof of the proposition.
\end{proof}

Next, we consider the associated ODE with Cauchy data
$(\gamma\sqrt{\frac{k}{4\pi}},0)$ and denote by $\Phi_k$ the
 (global periodic) solution with period
$$
T_k\lesssim\dint_0^{\gamma\sqrt{k}} \frac{du}{\sqrt{{\rm e}^{\gamma^2
k}-{\rm e}^{u^2}}}\lesssim \gamma\sqrt{k}\,{\rm e}^{-\frac{\gamma^2}{2}k}\ll{\rm e}^{-\frac{k}{2}}\quad(\gamma>1).
$$
Set $t_k=T_k/4$ so that $\Phi_k(t_k)=0$.
Note that by finite speed of propagation any weak solution $u_k$ of
(\ref{Wave2D}) with Cauchy data $(\phi_k,0)$ satisfies
$$
u_k(t,x)=\Phi_k(t)\quad\mbox{for}\quad
0<t<{\rm e}^{-\frac{k}{2}}\quad\mbox{and}\quad |x|<{\rm e}^{-\frac{k}{2}}-t.
$$
Hence
\begin{equation}
\label{below}
-\partial_t u_k(t_k,x)\gtrsim
{\rm e}^{\frac{\gamma^2}{2}k}\quad\mbox{for}\quad
|x|<{\rm e}^{-\frac{k}{2}}-t_k.
\end{equation}
It remains to estimate from below the norm $\|\partial_t
u_k(t_k)\|_{\dot{\mathcal B}^0_{2,\infty}}$. To get the desired
estimate we proceed in the following way. First recall that
$$
\|\partial_t u_k(t_k)\|_{\dot{\mathcal
B}^0_{2,\infty}}=\sup_{\|v\|_{\dot{\mathcal
B}^0_{2,1}}=1}\,\dint_{\R^2}\,v(x)\,\partial_t u_k(t_k,x)\,dx.
$$
Then we have to make a suitable choice of $v$. Let $v$ be a smooth
compactly supported function such that
$$
v(x)=1\;\mbox{for}\; |x|\leq \frac{1}{4}\quad\mbox{and}\quad v(x)=0\;\mbox{for}\;|x|\geq\frac{1}{2}\,.
$$
For $k\geq 1$ let $v_k(x)={\rm e}^{\frac{k}{2}}\,v({\rm e}^{\frac{k}{2}} x).$
We remark that $\|v_k\|_{\dot{\mathcal
B}^0_{2,\infty}}=\|v\|_{\dot{\mathcal B}^0_{2,\infty}}$  is a
constant. Using \eqref{below}, we get
\begin{eqnarray*}
\|\partial_t u_k(t_k)\|_{\dot{\mathcal B}^0_{2,\infty}}&\geq&\dint\,-\partial_t u_k(t_k,x)\,v_k(x)\,dx\\
&\geq&{\rm e}^{\frac{k}{2}}\dint_{|x|\leq \frac{1}{4}{\rm e}^{-\frac{k}{2}}}\;-\partial_t u_k(t_k,x)\,dx\\&\gtrsim&
{\rm e}^{\frac{k}{2}}\,\left({\rm e}^{-\frac{k}{2}}\right)^2\,{\rm e}^{\frac{\gamma^2}{2}k}={\rm e}^{\frac{\gamma^2-1}{2}k}.
\end{eqnarray*}
This finishes the proof of the first part of the theorem since $\gamma>1$.


\item[2)] Without loss of generality, we may assume that $0\leq
s<1$. Let $0<\gamma<\frac{1}{2}(1-s)$ and consider
$\varphi_k=k^{\gamma}\,f_k$. It is clear that
$$
\|\varphi_k\|_{H^s}\lesssim k^{\gamma}\,k^{-\frac{1}{2}(1-s)}\to 0\quad (\gamma<\frac{1}{2}(1-s))\,.
$$
Denote by $u_k$ any weak solution of (\ref{nlkg}) with initial data
$(\varphi_k,0)$ and $\Phi_k$ the solution of the associated ODE
with Cauchy data $\left(k^{\gamma}\,\sqrt{\frac{k}{4\pi}},0\right)$.
The period $T_k$ of $\Phi_k$ satisfies
$$
T_k\lesssim k^{\gamma+\frac{1}{2}}\,{\rm e}^{-\frac{k^{2\gamma+1}}{2}}\ll
{\rm e}^{-\frac{k}{2}}.
$$
Choose $t_k=\frac{T_k}{4}$ so that $\Phi_k(t_k)=0$.
By finite speed of propagation, we have
$$
u_k(t,x)=\Phi_k(t),\quad |x|<{\rm e}^{-\frac{k}{2}}-t,\quad 0<t<{\rm e}^{-\frac{k}{2}}.
$$
Hence
$|x|<{\rm e}^{-\frac{k}{2}}-t_k$,
\begin{equation}
\label{below1}
-\partial_t u_k(t_k,x)\,=\,-\dot{\Phi}_k(t_k)=\frac{1}{2\sqrt{\pi}}\sqrt{{\rm e}^{k^{2\gamma+1}}-{\rm e}^{4\pi\Phi_k^2(t_k)}}=\frac{1}{2\sqrt{\pi}}{\rm e}^{\frac{1}{2}k^{2\gamma+1}}.
\end{equation}
To conclude the proof we need to estimate from below $\|\partial_t
u_k(t_k)\|_{H^{s-1}}$. Write
$$
\|\partial_t
u_k(t_k)\|_{H^{s-1}}=\sup_{\|v\|_{H^{1-s}}=1}\,\dint_{\R^2}\,v(x)\,\partial_t
u_k(t_k,x)\,dx.
$$
Let $v_k(x)={\rm e}^{\frac{sk}{2}}\,v({\rm e}^{\frac{k}{2}}\,x)$ where $v$ is as
above. It follows that
\begin{eqnarray*}
\|\partial_t u_k(t_k)\|_{H^{s-1}}&\geq&\dint\,-\partial_t u_k(t_k,x)\,v_k(x)\,dx\\&\geq&{\rm e}^{\frac{sk}{2}}\,\dint_{|x|\leq \frac{1}{4}{\rm e}^{-\frac{k}{2}}}\;-\partial_t u_k(t_k,x)\,dx\\&\gtrsim&
{\rm e}^{\frac{sk}{2}}\,({\rm e}^{-\frac{k}{2}})^2\,{\rm e}^{\frac{1}{2}k^{2\gamma+1}}={\rm e}^{(\frac{s}{2}-1)k+\frac{1}{2}k^{2\gamma+1}},
\end{eqnarray*}
which goes to infinity when $k\to\infty$.
\end{enumerate}

\subsection{Proof of Theorem \ref{WP-IP}}\quad\\
\begin{enumerate}
\item[1)] Our aim here is to prove the local well-posedness of equation \eqref{Wave2D} in the space ${\mathcal B}^1_{2,q'}\times{\mathcal B}^0_{2,q'}$ for any $1\leq q<\infty$. The strategy is the same as in the proof of Theorem \ref{loc-large}. We decompose the initial data $(u_0,u_1)$ into a small part\footnote{To do so in the case $q'=\infty$ we have to work with $\tilde{\mathcal B}^1_{2,\infty}:=\overline{\mathcal D}^{{\mathcal B}\,^1_{2,\infty}}$ and $\tilde{\mathcal B}^0_{2,\infty}:=\overline{\mathcal D}^{{\mathcal B}\,^0_{2,\infty}}$. }  in ${\mathcal B}^1_{2,q'}\times{\mathcal B}^0_{2,q'}$ and a regular one:
    $$
    (u_0,u_1)=(u_0,u_1)_{>N}+(u_0,u_1)_{<N}\,.
    $$
    First we solve the IVP with regular data to obtain a local regular solution $v$, and then we solve the perturbed IVP with small data using a fixed point argument to obtain finally the expected solution $u$. Let us start by studying the free equation.
For a given $(u_\ell^0,u_\ell^1)\in {\mathcal B}^1_{2,q'}\times{\mathcal B}^0_{2,q'}$ we denote by $u_\ell$ the free solution with data $(u_\ell^0,u_\ell^1)$, that is
    \begin{equation}
    \label{free}
    \Box u_\ell+u_\ell=0,\quad (u_\ell,\partial_t\,u_\ell)(t=0)=(u_\ell^0,u_\ell^1)\,.
    \end{equation}
 Using a localization in frequency, energy estimate and Strichartz inequality \eqref{Stz}, we derive the following result.

\begin{prop}
 \label{free-Bes}
 Let $T>0$. Then for any $1<q'\leq\infty$, there exists $0\leq\varepsilon(q')<1/4$ such that
 \begin{equation}
 \label{free-nader}
  \|u_\ell\|_{L^\infty_T({\mathcal B}^1_{2,q'})}+\|u_\ell\|_{L^{4}_T ({\mathcal C}^{\frac{1}{4}-\varepsilon})}\;\lesssim\;\|u_\ell^0\|_{{\mathcal B}^1_{2,q'}}+\|u_\ell^1\|_{{\mathcal B}^0_{2,q'}}\;.
 \end{equation}
 Actually, the proof shows that when $q'\leq 4$, we have a zero loss of derivatives meaning $\varepsilon(q')=0$, and if $q'>4$, one might choose an arbitrary $0<\varepsilon<1/4$.
 \end{prop}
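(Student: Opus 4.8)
The plan is to localize in frequency and use that the Littlewood--Paley projectors commute with the free evolution, so that each dyadic piece $u_\ell^{(j)}:=\triangle_j u_\ell$ solves $\Box u_\ell^{(j)}+u_\ell^{(j)}=0$ with data $(\triangle_j u_\ell^0,\triangle_j u_\ell^1)$. This reduces the two norms on the left of \eqref{free-nader} to single-block estimates which are then resummed in $j$. Throughout I set $b_j:=2^{j}\|\triangle_j u_\ell^0\|_{L^2}+\|\triangle_j u_\ell^1\|_{L^2}$, so that by Definition \ref{inhom-bes} one has $\|(b_j)_j\|_{\ell^{q'}}\approx\|u_\ell^0\|_{{\mathcal B}^1_{2,q'}}+\|u_\ell^1\|_{{\mathcal B}^0_{2,q'}}$; the low-frequency block $\triangle_{-1}$ contributes only a harmless additive constant and I suppress it below.

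First I would handle the energy norm $\|u_\ell\|_{L^\infty_T({\mathcal B}^1_{2,q'})}$. Applying the conservation of the (frequency-localized) Klein--Gordon energy to $u_\ell^{(j)}$ and using that the evolution is free gives $\|u_\ell^{(j)}\|_{L^\infty_T(H^1)}\lesssim\|\triangle_j u_\ell^0\|_{H^1}+\|\triangle_j u_\ell^1\|_{L^2}\lesssim b_j$. Since $u_\ell^{(j)}$ is spectrally supported near $|\xi|\sim 2^j$ one has $\|\triangle_j u_\ell\|_{L^\infty_T(H^1)}\approx 2^{j}\|\triangle_j u_\ell\|_{L^\infty_T(L^2)}$, and taking the $\ell^{q'}$ norm in $j$ yields $\|u_\ell\|_{L^\infty_T({\mathcal B}^1_{2,q'})}\lesssim\|(b_j)_j\|_{\ell^{q'}}$ with no loss, for every $q'$.

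The Strichartz norm is where the derivative loss appears. Applying Proposition \ref{Str} to each $u_\ell^{(j)}$, whose source $\partial_t^2 u_\ell^{(j)}-\Delta u_\ell^{(j)}+u_\ell^{(j)}$ vanishes, gives $\|u_\ell^{(j)}\|_{L^4_T(B^{1/4}_{\infty,2})}\lesssim b_j$; and because $u_\ell^{(j)}$ occupies only the blocks $k\approx j$, the left-hand side is comparable to $2^{j/4}\|\triangle_j u_\ell\|_{L^4_T(L^\infty)}$, so that $2^{j/4}\|\triangle_j u_\ell\|_{L^4_T(L^\infty)}\lesssim b_j$. To reassemble the ${\mathcal C}^{1/4-\varepsilon}=B^{1/4-\varepsilon}_{\infty,\infty}$ norm I dominate the spatial supremum by an $\ell^4_j$ sum, $\sup_j a_j\le(\sum_j a_j^4)^{1/4}$, and then pull $L^4_T$ through the sum by Fubini (an exact identity); the exponent $4$ is dictated by the time-integrability $L^4_T$ of the Strichartz estimate, which is precisely why the eventual threshold sits at $q'=4$. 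This produces
\[
\|u_\ell\|_{L^4_T({\mathcal C}^{1/4-\varepsilon})}\;\lesssim\;\Big\|\,2^{-j\varepsilon}\,2^{j/4}\|\triangle_j u_\ell\|_{L^4_T(L^\infty)}\,\Big\|_{\ell^4_j}\;\lesssim\;\big\|\,2^{-j\varepsilon}b_j\,\big\|_{\ell^4_j}.
\]

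It remains to pass from the $\ell^4$ summation forced by $L^4_T$ to the $\ell^{q'}$ summability of the data, and this is the main obstacle. When $q'\le 4$ the embedding $\ell^{q'}\hookrightarrow\ell^4$ does it for free and one takes $\varepsilon=0$, giving the no-loss statement. When $q'>4$ no such embedding holds; I would recover the bound by spending the damping factor $2^{-j\varepsilon}$ with $\varepsilon>0$ and applying H\"older's inequality with exponents $q'/4$ and $q'/(q'-4)$, the complementary geometric series $\sum_j 2^{-4j\varepsilon\,q'/(q'-4)}$ being summable exactly because $\varepsilon>0$. This yields $\|2^{-j\varepsilon}b_j\|_{\ell^4}\lesssim_\varepsilon\|(b_j)_j\|_{\ell^{q'}}$ for any $0<\varepsilon<1/4$, which together with the energy estimate gives \eqref{free-nader} and reproduces the dichotomy in the statement.
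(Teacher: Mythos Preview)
Your proof is correct and follows essentially the same approach as the paper: frequency-localize, apply the block-wise energy and Strichartz estimates to get $2^{j/4}\|\triangle_j u_\ell\|_{L^4_T(L^\infty)}\lesssim b_j$, and then resum. The only organizational difference is that the paper handles the case $q'\le 4$ via the Minkowski inequality $L^4_T(\ell^{q'})\hookrightarrow \ell^{q'}(L^4_T)$ followed by the embedding ${\mathcal B}^{1/4}_{\infty,q'}\hookrightarrow{\mathcal C}^{1/4}$, whereas you pass through $\ell^4$ uniformly (using the exact Fubini identity $L^4_T(\ell^4)=\ell^4(L^4_T)$) and split cases only at the final H\"older step; both routes are equivalent here.
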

 \begin{proof}[Proof of Proposition \ref{free-Bes}]\quad\\
\noindent
From the energy and Strichartz estimates applied to $\Delta_j  u_\ell$, we have
\begin{equation}
 \label{free2}
 2^j\|\Delta_j u_\ell\|_{L^\infty_T(L^2)}+2^{j/4}\|\Delta_j u_\ell\|_{L^{4}_T(L^\infty)}
\;\lesssim\;2^j\| \Delta_j  u_\ell^0\|_{L^2}+\|\Delta_j  u_\ell^1\|_{L^2}\,.
 \end{equation}
Summing the above estimate in $\ell^{q'}$ we have
\begin{eqnarray*}
\|2^{j/4}\|\Delta_ju_\ell\|_{L^4_T(L^\infty)}\|_{\ell^{q'}}\leq
\|u_\ell^0\|_{{\mathcal B}^1_{2,q'}}+\|u_\ell^1\|_{{\mathcal B}^0_{2,q'}}\,.
\end{eqnarray*}
In the case $q'\leq4$, the proposition follows from the observation
$$
\|u_\ell\|_{L^4({\mathcal B}^{1/4}_{\infty,q'})}\leq \|2^{j/4}\|\Delta_ju_\ell\|_{L^4_T(L^\infty)}\|_{\ell^{q'}}
$$
together with the Sobolev embedding ${\mathcal B}^{1/4}_{\infty,q'}\to {\mathcal C}^{1/4}$.
 When $q'>4$, notice that for any $0<\varepsilon<1/4$,
$$
\|u_\ell\|_{L^4_T({\mathcal B}^{1/4-\varepsilon}_{\infty,4})}=
\|\left(2^{j/4-j\varepsilon}\|\Delta_ju_\ell\|_{L^\infty}\right)_{\ell^4}\|_{L^4_T}=
\|2^{-j\varepsilon}\,\left(2^{j/4}\|\Delta_ju_\ell\|_{L^4_T(L^\infty)}\right)\|_{\ell^{4}}
$$
Using H\"older inequality in $j$ ($\frac{1}{4}=\frac{1}{q'}+\frac{1}{r}$ with $r=\frac{4q'}{q'-4}$) and \eqref{free2}, we get
\begin{eqnarray*}
\|u_\ell\|_{L^4_T({\mathcal B}^{1/4-\varepsilon}_{\infty,4})}&\leq& \|\left(2^{-j\varepsilon}\right)\|_{\ell^r}\,\|\left(2^{j/4}\|\Delta_ju_\ell\|_{L^4_T(L^\infty)}\right)\|_{\ell^{q'}}\\
&\lesssim& \|u_\ell^0\|_{{\mathcal B}^1_{2,q'}}+\|u_\ell^1\|_{{\mathcal B}^0_{2,q'}}\,.
\end{eqnarray*}
Again, Sobolev embedding enables us to finish the proof.
\end{proof}
Denote by $g_q(u):=u\Big((1+u^2)^{\frac{q-2}{2}}\;{\rm e}^{4\pi \left((1+u^2)^{\frac{q}{2}}-1\right)}-1\Big)$ so that the equation \eqref{Wave2D} reads
 \begin{equation}
 \label{Wave2d}
 \Box u+u+g_q(u)=0\,.
 \end{equation}
An easy computation shows that
{\tiny
\begin{equation}
\label{g_q}
|g_q(u)-g_q(v)|\,\leq\,\left\{
\begin{array}{cllll}C|u-v|\Big({\rm e}^{C|u|^q}-1+{\rm e}^{C|v|^q}-1\Big)\quad&\mbox{if}&\quad
1\leq q\leq 2,\\
C|u-v|\Big(u^2+{\rm e}^{C|u|^q}-1+v^2+{\rm e}^{C|v|^q}-1\Big) \quad&\mbox{if}&\quad 2<q<\infty.
\end{array}
\right.
\end{equation}}

 According to \eqref{g_q} and the Sobolev embeddings
\begin{eqnarray*}
H^1\hookrightarrow {\mathcal B}^1_{2,q'}\quad&\mbox{if}&\quad q\leq 2,\\
H^2\hookrightarrow {\mathcal B}^1_{2,q'}\hookrightarrow H^1\quad&\mbox{if}&\quad q> 2,
\end{eqnarray*}
 we will distinguish two cases.\\

\noindent$\bullet$\,\underline{\sf Case $1\leq q<2$}: \\

We solve $\Box v+v+g_q(v)=0$ with Cauchy data $(u_0,u_1)_{<N}\in H^1\times L^2$ to obtain a global solution $v\in {\mathcal C}(\R,H^1)$. Next we have to solve
\begin{equation}
\label{small}
\Box w+w+g_q(v+w)-g_q(v)=0,\quad (w,\partial_t w)(t=0)=(u_0,u_1)_{>N}\,.
\end{equation}
We seek $w$ in the form
$$
w=u_\ell+{\mathbf w},
$$
where $u_\ell$ is the free solution with Cauchy data $(u_0,u_1)_{>N}$. Hence ${\mathbf w}$ solves
\begin{equation}
\label{small1}
\Box{\mathbf w}+{\mathbf w}+g_q(v+u_\ell+{\mathbf w})-g_q(v)=0,\quad ({\mathbf w},\partial_t {\mathbf w})(t=0)=(0,0)\,.
\end{equation}
We rely on estimates for the linear part $u_\ell$ given by Lemma \ref{free-Bes} in order to choose appropriate functional spaces for which a fixed point argument can be performed. We introduce, for any nonnegative time $T$ and some $0\leq\varepsilon<1/4$, the following complete metric space
$$
{\mathcal E}_T={\mathcal C}([0,T],H^1(\R^2))\cap{\mathcal
C}^1([0,T], L^2(\R^2))\cap L^{4}_T({\mathcal C}^{\frac{1}{4}-\varepsilon}(\R^2)) $$
endowed with the norm
$$
 \|u\|_{{\mathcal E}_T}:=\sup_{0\leq t\leq
T}\Big[\|u(t,.)\|_{H^1}+\|\partial_tu(t,.)\|_{L^2}\Big]+\|u\|_{L^{4}_T({\mathcal
C}^{\frac{1}{4}-\varepsilon})}.
$$
 For a positive real number $\delta$, we
denote by ${\mathcal E}_T(\delta)$   the ball in ${\mathcal E}_T$
of radius $\delta$ and centered at the origin. On the ball
${\mathcal E}_T(\delta) $, we define the map $\Phi$ by
\begin{eqnarray}
\label{e9} {\mathbf w}\longmapsto\Phi({\mathbf w}):=\tilde{{\mathbf w}},
\end{eqnarray}
where
\begin{eqnarray}
\label{10}
\Box\tilde{{\mathbf w}}+\tilde{{\mathbf w}}=g_q(v)-g_q(v+u_\ell+{\mathbf w}),\quad
\left(\tilde{{\mathbf w}},\partial_t\tilde{{\mathbf w}}\right)(t=0)=(0,0)\,.
\end{eqnarray}
To show that, for small $T$ and $\delta$, $\Phi$ maps ${\mathcal
E}_T(\delta)$ into itself and it is a contraction, we use Lemma \ref{free-Bes} together with Lemma \ref{logBeso} and \eqref{g_q}. We skip the detail here and we refer to \cite{IMM1} for similar arguments.\\

\noindent$\bullet$\,\underline{\sf Case $2<q <\infty$}:\\

The method is almost the same as above except for the choice of the functional spaces. First we solve $\Box v+v+g_q(v)=0$ with Cauchy data $(u_0,u_1)_{<N}\in H^2\times H^1$ to obtain a local solution $v\in {\mathcal C}((-T,T),H^2)$. Remember that in this case, the nonlinearity is too strong to solve the Cauchy problem in $H^1\times L^2$ (see Theorem \ref{Mainresult1}). Next we have to solve
\begin{equation}
\label{small2}
\Box w+w+g_q(v+w)-g_q(v)=0,\quad (w,\partial_t w)(t=0)=(u_0,u_1)_{>N}\,.
\end{equation}
We seek $w$ in the form
$$
w=u_\ell+{\mathbf w},
$$
where $u_\ell$ is the free solution with Cauchy data $(u_0,u_1)_{>N}$. Hence ${\mathbf w}$ solves
\begin{equation}
\label{small3}
\Box{\mathbf w}+{\mathbf w}+g_q(v+u_\ell+{\mathbf w})-g_q(v)=0,\quad ({\mathbf w},\partial_t {\mathbf w})(t=0)=(0,0)\,.
\end{equation}
We introduce, for any nonnegative time $T$, the following complete metric space
$$
{\mathcal E}_T={\mathcal C}([0,T],H^2(\R^2))\cap{\mathcal
C}^1([0,T], H^1(\R^2))\cap L^4_T( {\mathcal C}^{1/4}(\R^2)) $$
endowed with the norm $$ \|u\|_{{\mathcal E}_T}:=\sup_{0\leq t\leq
T}\Big[\|u(t,.)\|_{H^2}+\|\partial_tu(t,.)\|_{H^1}\Big]+\|u\|_{L^4_T({\mathcal
C}^{1/4})}.
$$
We denote by ${\mathcal E}_T(\delta)$   the ball in ${\mathcal E}_T$
of radius $\delta$ and centered at the origin. On the ball
${\mathcal E}_T(\delta) $, we define the map $\Phi$ by
\begin{eqnarray}
\label{99} {\mathbf w}\longmapsto\Phi({\mathbf w}):=\tilde{{\mathbf w}},
\end{eqnarray}
where
\begin{eqnarray}
\label{100}
\Box\tilde{{\mathbf w}}+\tilde{{\mathbf w}}=g_q(v)-g_q(v+u_\ell+{\mathbf w}),\quad
\left(\tilde{{\mathbf w}},\partial_t\tilde{{\mathbf w}}\right)(t=0)=(0,0)\,.
\end{eqnarray}
Having in hands Lemmas \ref{free-Bes}-\ref{logBeso} and \eqref{g_q}, we proceed in a similar way as in the previous case (see also \cite{IMM1}) but now we need to be more careful since the source term has to be estimated in $L^1_T(H^1)$ instead of $L^1_T(L^2)$. We refer also to \cite{CIMM} for similar computation in the context of nonlinear Schr\"odinger equation.

\item[2)] Let us focus on the second part of the theorem. Without loss of generality, we may assume that $0\leq
s<1$. Also, for the sake of simplicity, we take $q=1$. Let $\gamma>\frac{1}{2}$ and, for $k\geq 1$, consider the
function $g_k$ defined by
\begin{eqnarray*}
 g_k(x)&=&\; \left\{
\begin{array}{cllll}\sqrt{k}\quad&\mbox{if}&\quad |x|\leq {\rm e}^{-k/2},\\\\
-\frac{\sqrt{k}}{\log
2}\,\log|x|+\left(\sqrt{k}-\frac{k^{\frac{3}{2}}}{2\log
2}\right)\quad&\mbox{if}&\quad {\rm e}^{-\frac{k}{2}}\leq |x|\leq
2{\rm e}^{-\frac{k}{2}},\\\\
0 \quad&\mbox{if}&\quad
|x|\geq 2{\rm e}^{-\frac{k}{2}}.
\end{array}
\right.
\end{eqnarray*}

Remark that
$$
\|k^\gamma\,g_k\|_{H^s}\lesssim
k^{\gamma-s+\frac{3}{2}}\,{\rm e}^{-(1-s)\,\frac{k}{2}}\longrightarrow
0\qquad (k\longrightarrow\infty).
$$

Denote by $\Phi_k$ the solution of the associated O.D.E with Cauchy
data $\left(k^{\gamma+\frac{1}{2}}, 0\right)$. The period $T_k$ of
$\Phi_k$ satisfies
$$
T_k\lesssim
k^{\gamma+\frac{1}{2}}\,{\rm e}^{-\frac{1}{2}k^{\gamma+\frac{1}{2}}}\ll
{\rm e}^{-\frac{k}{2}}.
$$
Choose $t_k=T_k/4$ so that $\Phi_k(t_k)=0$. By finite speed of
propagation, any weak solution $u_k$ of (\ref{Wave2D}) satisfies, for
$|x|<{\rm e}^{-\frac{k}{2}}-t_k$,
$$
-\partial_t
u_k(t_k,x)\,=\,\dot{\Phi}_k(t_k)=\frac{{\rm e}^{-2\pi}}{2\sqrt{\pi}}\sqrt{{\rm e}^{\sqrt{k^{2\gamma+1}+1}}-{\rm e}^{\sqrt{\Phi_k^2(t_k)+1}}}\,\gtrsim\,
{\rm e}^{\frac{1}{2}k^{\gamma+\frac{1}{2}}}.
$$

So arguing exactly as before, we get
$$
\|\partial_t u_k(t_k)\|_{H^{s-1}}\gtrsim
({\rm e}^{-\frac{k}{2}})^2\,{\rm e}^{\frac{sk}{2}}\,{\rm e}^{\frac{1}{2}k^{\gamma+\frac{1}{2}}}={\rm e}^{(\frac{s}{2}-1)k+\frac{1}{2}k^{\gamma+\frac{1}{2}}}.
$$
This concludes the proof once $\gamma>\frac{1}{2}$.
\end{enumerate}
\endproof





\begin{thebibliography}{99} \vskip 15pt

\bibitem{Tan}{\bf S. Adachi and K. Tanaka}, {\em Trudinger type
inequalities in $\R^N$ and their best exponents}, Proc. Amer. Math.
Society, V. 128, N. 7, pp. 2051-2057, 1999.

\bibitem{A}
{\bf A.~Atallah}, {\em Local existence and
estimations for a semilinear wave equation in two dimension
space.} Boll. Unione Mat. Ital. Sez. B Artic. Ric. Mat. {\bf 8} (2004), 1, 1--21.



\bibitem{BG} {\bf H.~Bahouri and P. G\'erard}, {\em High
frequency approximation of solutions to critical nonlinear wave
equations}, Amer. J. Math. 121, pp. 131-175, 1999.





\bibitem{BS} {\bf H.~Bahouri and J. Shatah}, {\em Global estimate for
the critical semilinear wave equation}, Ann. Inst. H. Poincar\'e
Anal. Non Lin\'eaire, Vol. 15, {\bf 6}, pp. 783-789, 1998.

\bibitem{BrGa}
H. Br\'ezis and T. Gallouet, {\em Nonlinear Schr\"odinger evolution equations}, Nonlinear Anal. 4 (1980), no. 4, 677--681.



\bibitem{BGT}{\bf N.~Burq, P. G\'erard and N. Tvzetkov}, {\em
An instability property of the nonlinear Schr\"odinger equation on
${\mathcal S}^d$}, Math. Res. Lett. 9, no. 2-3, pp. 323-335, 2002.

\bibitem{BIG}{\bf N.~Burq, S. Ibrahim and P. G\'erard},
{\em Instability results for nonlinear Schr\"odinger and wave
equations}, Preprint.

\bibitem{CCT}{\bf M.~Christ, J. Colliander and T. Tao},
{\em Ill-posedness for nonlinear Schr\"odinger and wave equations},
Preprint.

\bibitem{CCT1}{\bf M.~Christ, J. Colliander and T. Tao},
{\em Asymptotics, frequency modulation, and low regularity
 ill-posedness for canonical defocusing equations},
 Amer. J. Math. 125, 2003.


\bibitem{CIMM}{\bf J. Colliander, S. Ibrahim, M. Majdoub, N.
Masmoudi}, {\em Energy Critical NLS in two space dimensions}, to appear in in J. Hyperbolic Differ. Equ.

\bibitem{GP}{\bf I. Gallagher and F. Planchon}, {\em On global solutions to a defocusing semi-linear wave equation}, Rev. Mat. Iberoamericana, Vol. 19, 161--177, 2003.

\bibitem{Germain}{\bf P. Germain}, {\em Global infinite energy solutions of the critical semilinear
 wave equation}, Rev. Mat. Iberoam., Vol. 24, 463--497, 2008.

\bibitem{GV1}{\bf J. Ginibre and G. Velo}, {\em The global Cauchy problem for nonlinear Klein-Gordon equation}, Math.Z,
{\bf 189}, 487-505, 1985.

\bibitem{IMM1} {\bf S.~Ibrahim, M. Majdoub and N. Masmoudi}, {\em
Global solutions for a semilinear 2D Klein-Gordon equation with
exponential type nonlinearity}, Communications in Pure and Applied
Mathematics, Volume 59, Issue 11, Pages 1639-1658, 2006.

\bibitem{IMM} {\bf S.~Ibrahim, M. Majdoub and N. Masmoudi}, {\em
Double logarithmic inequality with a sharp constant}, Proceedings of
the American Mathematical Society, electronically published on June
13, 2006 and Volume 135, Number 1, January 2007, Pages 87-97.



\bibitem{IMM2} {\bf S.~Ibrahim, M.~Majdoub and N.~Masmoudi},
{\em Ill-posedness of $H\sp 1$-supercritical waves}, C. R. Math. Acad. Sci. Paris {\bf 345}  (2007),  no. 3, 133--138.



\bibitem{KPV}{\bf C. E. Kenig, G. Ponce and L. Vega}, {\em Global well-posedness for semi-linear wave equations}, Comm. Partial Differential Equations,
 Vol. 25, 9--10, 2000.

\bibitem{Le} {\bf G. Lebeau}, {\em Nonlinear optics and supercritical
wave equation}, Bull. Soc. R. Sci. Li\`ege 70, No.4-6, 267-306,
2001.

\bibitem{Leb} {\bf G. Lebeau}, {\em Perte de r\'egularut\'e
pour l'\'equation des ondes surcritique}, Bull. Soc. Math. France
133, 145-157, 2005.

\bibitem{L}{\bf P.- L. Lions},  {\em The
concentration-compactness principal in the calculus of variations.
The limit case}, Rev. Mat. Iberoamericana {\bf 1},  12-45, 1985.







\bibitem{M} {\bf J. Moser}, {\em A sharp form of an inequality of
N. Trudinger}, Ind. Univ. Math. J. {\bf 20}, 1077-1092, 1971.





\bibitem{NO1}{\bf M. Nakamura and T. Ozawa}, {\em Global solutions
in the critical Sobolev space for the wave equations with
nonlinearity of exponential growth}, Math. Z. {\bf 231}, 479-487,
1999.



\bibitem{NO2}{\bf M. Nakamura and T. Ozawa}, {\em The Cauchy
problem for nonlinear wave equations in the Sobolev space of
critical order}, Discrete and Continuous Dynamical Systems, V. 5, N.
1, 215-231, 1999.

\bibitem{Planchon}{\bf F. Planchon}, {\em Self-similar solutions and semi-linear wave equations in Besov spaces}, J. Math. Pures Appl. {\bf 79}, 8, 809--820, 2000.

\bibitem{Ru}{\bf B. Ruf}, {\em A sharp Trudinger-Moser type
inequality for unbounded domains in $\R^2$}, J.~Funct.~Anal. {\bf
219}, no.~2, 340--367, 2005.

\bibitem{RS} {\bf T. Runst and W. Sickel}, {\em Sobolev spaces of fractional order, {N}emytskij operators, and
              nonlinear partial differential equations}, {de Gruyter Series in Nonlinear Analysis and Applications}, Vol.{3}, Berlin, 1996.


\bibitem{SS2}{\bf J. Shatah and M. Struwe}, {\em Well-Posedness in the energy space for semilinear wave equation with critical
growth}, IMRN, {\bf 7},  303-309, 1994.


\bibitem{Strauss}{\bf W. Strauss}, {\em Nonlinear wave equations},
Conf. Board of the Math. Sciences, {\bf 73}, Amer. Math. Soc., 1989.

\bibitem{Stru}{\bf M. Struwe}, {\em Semilinear wave equations}, Bull. Amer. Math. Soc., {\bf N.S},  $n^\circ$ 26, 53-85, 1992.

\bibitem{Struwe09}{\bf M. Struwe}, {\em Global wellposedness for smooth radially symmetric solutions of $\Box u+u{\rm e}^{u^2}=0$}, Preprint.

\bibitem{tao}{\bf T. Tao},{\em Global regularity for a logarithmically supercritical
              defocusing nonlinear wave equation for spherically symmetric data}, J. Hyperbolic Differ. Equ., Vol. 4, 259--265, 2007.


\bibitem{Tr1} {\bf H. Triebel}, {\em Theory of function spaces}, Monograph in mathematics,
 Birkhauser(1983), vol. 78.

\bibitem{Tr2} {\bf H. Triebel}, {\em Theory of function spaces II}, Monograph in mathematics,
 Birkhauser(1992), vol. 84.

\bibitem{Tr3} {\bf H. Triebel}, {\em Interpolation Theory, Function Spaces,
Differential Operators}(1978), North-Holland Publ. co. , Amsterdam.


\end{thebibliography}
\end{document}